\providecommand{\U}[1]{\protect \rule{.1in}{.1in}}
\DeclareMathOperator*{\esssup}{ess\,sup}
\newtheorem{theorem}{Theorem}[section]
\newtheorem{definition}[theorem]{{Definition}}
\newtheorem{lemma}[theorem]{Lemma}
\newtheorem{remark}[theorem]{{Remark}}
\newenvironment{proof}[1][Proof]{\noindent \textbf{#1.} }{\  \rule{0.5em}{0.5em}}
\begin{document}

\title{Quadratic Mean-Field Reflected BSDEs}
\author{ Ying Hu\thanks{Univ. Rennes, CNRS, IRMAR-UMR 6625, F-35000, Rennes, France and School of Mathematical Sciences, Fudan University, Shanghai
		200433, China. ying.hu@univ-rennes1.fr.
		Research   supported
		by Lebesgue Center of Mathematics ``Investissements d'avenir"
		program-ANR-11-LABX-0020-01, by CAESARS-ANR-15-CE05-0024 and by MFG-ANR-16-CE40-0015-01.}
	\and Remi Moreau \thanks{Univ. Rennes, CNRS, IRMAR-UMR 6625, F-35000, Rennes, France. remi.moreau@ens-rennes.fr. }
	\and Falei Wang\thanks{Zhongtai Securities Institute for Financial  Studies and School of Mathematics, Shandong University, Jinan 250100, China.
		flwang2011@gmail.com. Research supported by the Natural Science Foundation of Shandong Province for Excellent Youth Scholars (ZR2021YQ01),  the National Natural Science Foundation of China (Nos. 12171280,  12031009 and 11871458) and the Young Scholars Program of Shandong University.}}
\date{}
\maketitle
\begin{abstract}
In this paper, we analyze  mean-field reflected  backward stochastic differential equations when the driver has quadratic growth in the second unknown $z$. Using linearization technique  and BMO martingale theory,  we first apply fixed point argument  to establish uniqueness and existence result for the case with bounded terminal condition and obstacle. Then, with the help of a $\theta$-method, we develop a  successive  approximation procedure to remove the boundedness condition on the terminal condition and obstacle  when the generator is concave (or convex) with respect to the 2nd unknown $z$.
\end{abstract}
\medskip
\textbf{Key words}: mean-field, reflected BSDEs,  linearization technique, $\theta$-method

\noindent \textbf{MSC-classification}: 60H10, 60H30

\section{Introduction}

Let $(\Omega, \mathscr{F},\mathbf{P})$ be  a given complete probability space  under which $B$ is a $d$-dimensional standard Brownian motion. Suppose $(\mathscr{F}_t)_{0\leq t\leq T}$ is the natural filtration generated by $B$ augmented by the $\mathbf{P}$-null sets and $\mathcal{P}$  the corresponding sigma algebra of progressive sets of $\Omega\times [0,T]$. This paper is devoted to the study  of the following mean-field type reflected backward stochastic differential equations (BSDEs):
\begin{align}\label{my1}
\begin{cases}
&Y_t=\xi+\int_t^T f(s,Y_s,\mathbf{P}_{Y_s},Z_s)ds-\int_t^T Z_sdB_s+K_T-K_t, \quad 0\le t\le T,\\
& Y_t\geq h(t,Y_t, \mathbf{P}_{Y_t}),\quad \forall t\in [0,T] \,\,\mbox{ and }\,\, \int_0^T (Y_t-h(t,Y_t,\mathbf{P}_{Y_t}))dK_t = 0,
\end{cases}
\end{align}
where $\mathbf{P}_{Y_t}$
is the marginal probability distribution of the process $Y$ at time $t$, the terminal condition $\xi$ is a scalar-valued $\mathscr{F}_T$-measurable random variable, the driver $f: \Omega\times [0,T]\times \mathbb{R}\times \mathcal{P}_1(\mathbb{R})\times \mathbb{R}^d\to \mathbb{R}$ and  the constraint $h:\Omega\times[0,T]\times\mathcal{P}_1(\mathbb{R})\times\mathbb{R} \to \mathbb{R}$ are progressively measurable maps with respect to $\mathcal{P}\times \mathcal{B}(\mathbb{R}) \times \mathcal{B}(\mathcal{P}_1(\mathbb{R}))\times \mathcal{B}(\mathbb{R}^{d})$ and $\mathcal{P}\times\mathcal{B}(\mathcal{P}_1(\mathbb{R}))\times\mathcal{B}(\mathbb{R})$ respectively.\medskip

It is well known that El Karoui et al. \cite{EKP} introduced the following  reflected BSDE
\begin{align}\label{my21}
\begin{cases}
&Y_t=\xi+\int_t^T f(s,Y_s,Z_s)ds-\int_t^T Z_sdB_s+K_T-K_t, \quad 0\le t\le T,\\
& Y_t\geq L_t,\quad \forall t\in [0,T] \,\,\mbox{ and }\,\, \int_0^T (Y_t-L_t)dK_t = 0,
\end{cases}
\end{align}
in which the obstacle $L$ is a given stochastic process.  When the  terminal condition is square-integrable  and the driver is Lipschitz in the unknowns $(y,z)$,  the authors of \cite{EKP} obtained the existence and uniqueness of solution to reflected BSDE \eqref{my21} both by a fixed point argument and by penalization method.
Great progress has since then been made in this field, as it has rich connections with obstacle problems of partial differential equations, American option pricing, zero-sum games, switching problems and many others, e.g., see \cite{benezet2021,CK1,karoui1997a, HJ, HZ,HT1} and the references
therein for more details on this topic.
In particular,  the  term $Y$ can be seen as a solution of an optimal stopping problem
\begin{equation}\label{my31}
Y_t=\esssup\limits_{\tau \text{ stopping time }\!\geq\ \! t} \mathbf{E}_t\bigg[\eta\mathbf{1}_{\{\tau=T\}}+L_{\tau}\mathbf{1}_{\{\tau<T\}}+\int^{\tau}_tf(s,Y_s,Z_s)ds\bigg], \ \forall t\leq T.
\end{equation}
\medskip

Recently, in order to study partial hedging of financial derivatives, various mean-field type reflected BSDEs were introduced, in which the driver $f$ and the obstacle $h$ may depend on the law of the term $Y$. For example, Briand, Elie and Hu \cite{BH} considered BSDEs with mean reflection to study the super-hedging problem under running risk management constraint. We refer the reader to \cite{BC1,BE,BC,CH,DD,L} and the references therein for  some other important contributions.\medskip

In particular, motivated by applications in pricing life insurance contracts with surrender options, Djehiche, Elie and   Hamad{\`e}ne formulated in \cite{DES} mean-field reflected BSDEs of the form \eqref{my1}. Under the Lipschitz
hypothesis on the driver, they used a fixed point method to prove the existence and uniqueness result for mean-field reflected BSDEs  \eqref{my1} via the Snell envelope representation \eqref{my31}:
{\small
\begin{align}\label{my41}
\Gamma(U)_t=\esssup\limits_{\tau \text{ stopping time }\!\geq\ \! t} \mathbf{E}_t\bigg[\xi\mathbf{1}_{\{\tau=T\}}+h \left(\tau,U_{\tau},{(\mathbf{P}_{U_s})_{s=\tau}}\right) \mathbf{1}_{\{\tau<T\}}+\int^{\tau}_tf(s,U_s,\mathbf{P}_{U_s})ds\bigg],\, \forall t\leq T,
\end{align}}
in which the driver $f$ is independent of the second unknown $z$.
Indeed, any solution $Y$ to \eqref{my1} is a fixed point of the solution map $\Gamma(U)$.
Note that the comparison principle for mean-filed BSDE is quite restricted, which involves some additional monotone hypothesis on the driver (see \cite{BJ}). Thus, under some additional assumptions, they applied a penalization method to obtain the existence of a solution when the driver $f$ also depends on the second unknown $z$. More precisely, they used a global domination condition in the $z$ component and assumed that
\[f(s,Y_s,\mathbf{P}_{Y_s},Z_s)=F(s,Y_s,\mathbf{E}[Y_s],Z_s), \quad h(t,Y_t, \mathbf{P}_{Y_t})=H(t,Y_t, \mathbf{E}[Y_t]),\]
where $F$ and $H$ are non-decreasing with respect to $\mathbf{E}[Y_s]$.
\medskip

 Recently, Djehiche, Dumitrescu and Zeng  \cite{DDZ} studied the mean-field reflected BSDEs with jumps and  right-continuous and left-limited obstacle.  In the Lipschitz driver case, they  introduced a novel fixed point argument to establish the existence as well as the uniqueness of the solution without these additional assumptions. The main idea is based on the following nonlinear Snell envelope representation  for the reflected BSDE \eqref{my21}:
\begin{align}\label{my51}
Y_t=\esssup\limits_{\tau \text{ stopping time }\!\geq\ \! t} \mathcal{E}_{t,\tau}^{g}[\eta\mathbf{1}_{\{\tau=T\}}+L_{\tau}\mathbf{1}_{\{\tau<T\}}], \ \ \forall t\leq T,
\end{align}
where $\mathcal{E}_{t,\tau}^{g}[\eta\mathbf{1}_{\{\tau=T\}}+L_{\tau}\mathbf{1}_{\{\tau<T\}}]:=y_{t}^{\tau}$ is the solution to the following standard BSDE:
\begin{align} \label{my91} y_{t}^{\tau} = \eta\mathbf{1}_{\{\tau=T\}} + L_{\tau}\mathbf{1}_{\{\tau<T\}} + \int^{\tau}_t f(s,y_{s}^{\tau},z_{s}^{\tau})ds-\int^\tau_tz_{s}^{\tau}dB_s,\end{align}
which does not explicitly involve the term $Z$ and allows to construct a solution map $\Gamma$ when the driver $f$ depends on the second unknown $z$.
Our aim is to establish the existence and uniqueness of the solution to the  mean-field reflected BSDE \eqref{my1} with quadratic driver, i.e.,  the driver $f$ is allowed to have quadratic growth in the second unknown $z$. \medskip

In the BSDEs theory, the research of quadratic case is significantly more difficult than that of Lipschitz case.
Based on the monotone convergence method and PDE-based approximation technique, Kobylanski \cite{K1}   established the solvability of real-valued quadratic BSDEs with bounded  terminal condition. Then, using monotone convergence method and localization stopping times, Briand and Hu \cite{BH2006} extended the existence result of real-valued quadratic BSDEs to the case that the terminal condition can have exponential moment of certain order. Under the additional assumption that the driver $f$  is concave (or convex) with respect to the 2nd unknown  $z$,
Briand and Hu \cite{BH2008} used a $\theta$-method to obtain the uniqueness result. \medskip

With the help of the afore-mentioned results, some
generalizations were obtained for quadratic reflected BSDEs. Indeed, Kobylanski et al. \cite{KL} and  Bayraktar and Yao \cite{BY} made a counterpart study for the case of bounded terminal condition and  obstacle and  of unbounded terminal condition and obstacle, respectively. In particular, they also established the corresponding nonlinear Snell envelope representation \eqref{my51}, which allows us to construct  a iteration map $U\rightarrow\Gamma(U)$ as that of Lipschitz case. \medskip

However, the monotone convergence method for mean-filed BSDE is quite restricted. Thus, we have to develop an alternative approximation approach to obtain a fixed point of the quadratic solution map $\Gamma$.  Note that Tevzadze \cite{Te} proposed a fixed point method for real-valued quadratic BSDEs with bounded terminal condition through  BMO martingale theory. However, when the terminal condition is unbounded, the fixed point argument fails to work since   {the} 2nd unknown $Z$  may be unbounded in the BMO space. Recently, it was found in Fan et al. \cite{FHT2} that $\theta$-method  also provides an approximation procedure for (multi-dimensional) quadratic BSDEs when the driver is concave (or convex) and terminal value has exponential moments of arbitrary order.
For more research on this field, we refer the reader to  \cite{BE0,BE1,CN,HT,L1,HR2016,XZ2016} and the references therein.\medskip

 Thanks to these results, we could show that the quadratic solution map $\Gamma$ admits a unique fixed point by contraction map argument and the $\theta$-method.
When the terminal condition and obstacle are bounded, we  combine
nonlinear  Snell envelope representation and BMO martingale theory to show the quadratic solution map is  {a} contraction. In comparison to  that of \cite{DDZ}, we use linearization technique
to estimate the difference of two solutions instead of It\^{o}'s formula. As a byproduct, our argument removes a  domination condition \cite [Assumption 2.1 (ii)(b)]{DDZ} for the Lipschitz case. \medskip

In the unbounded case, we first apply  nonlinear  Snell envelope representation to introduce an approximation procedure through a sequence of quadratic reflected BSDEs with unbounded terminal condition and obstacle. Then, utilizing quadratic BSDEs theory and the $\theta$-method, we show the convergence of the approximating sequences by some delicate and involved technique computations. In particular, the corresponding limit is the unique solution to the quadratic mean-field reflected BSDE \eqref{my1} with the concave driver and the terminal value of exponential moments of arbitrary order.
In conclusion, we develop quadratic mean-field reflected BSDEs theory which gives some extension of the result from \cite{DES} and \cite{DDZ} to the quadratic case.

\medskip

The paper is organized as follows. In section 2, we start with some technical lemmas and \textcolor{red} {a} revisit to Lipschitz case to illustrate the main idea. Section 3 is devoted to the quadratic case with bounded terminal condition and obstacle, while Section 4 removes the boundedness condition using convexity on the driver.

\subsubsection*{Notation.}
For  each Euclidian space,  we  denote by $\langle\cdot ,\cdot \rangle$  and  $|\cdot|$ its scalar product and the associated norm, respectively.
Then, for each $p\geq 1$, we consider the following collections:

\begin{description}
	\item[$\bullet$] $ \mathcal{L}^{p}$ is the collection of  real-valued $\mathscr{F}_T$-measurable random variables $\xi$ satisfying {\small \[
	\|\xi\|_{\mathcal{L}^{p}}=\mathbf{E}\left[|\xi|^p\right]^{\frac{1}{p}}<\infty;
	\]}
	\item[$\bullet$]  $\mathcal{L}^{\infty}$ is the collection of  real-valued $\mathscr{F}_T$-measurable random variables $\xi$ satisfying 	{\small \[
 \|\xi\|_{\mathcal{L}^{\infty}}=\esssup\limits_{\omega\in\Omega}|\xi(\omega)|<\infty;\]}
	\item[$\bullet$]  $\mathcal{H}^{p,{d}}$  is the collection of $\mathbb{R}^{{d}}$-valued  $\mathscr{F}$-progressively measurable   processes $(z_t)_{0\leq t\leq T}$
	satisfying
	{\small \begin{align*}
	\|z\|_{\mathcal{H}^{p}}=\mathbf{E}\bigg[\bigg(\int^T_0|z_t|^2dt \bigg)^{\frac{p}{2}}\bigg]^{\frac{1}{p}}<\infty;
	\end{align*}}

	\item[$\bullet$]  $\mathcal{S}^p$  is the collection of real-valued  $\mathscr{F}$-adapted continuous  processes $(y_t)_{0\leq t\leq T}$ satisfying
		{\small\begin{align*}
	\|y\|_{\mathcal{S}^{p}}=\mathbf{E}\bigg[\sup_{t\in[0,T]}|y_t|^p\bigg]^{\frac{1}{p}}<\infty;
	\end{align*}}
	
\item[$\bullet$]  $\mathcal{S}^{\infty}$  is the collection of  real-valued  $\mathscr{F}$-adapted continuous  processes $(y_t)_{0\leq t\leq T}$ satisfying
{\small \[
\|y\|_{\mathcal{S}^{\infty}}=\esssup\limits_{(t,\omega)\in[0,T]\times\Omega}|y(t,\omega)|<\infty;
\]}
\item[$\bullet$]  $\mathcal{A}^p$ is the collection of continuous non-decreasing processes $(K_t)_{0\leq t\leq T}\in \mathcal{S}^{p}$ with $K_0=0$;
	
	\item[$\bullet$] $\mathcal{P}_p(\mathbb{R})$ is the collection of all probability measures over $(\mathbb{R},\mathcal{B}(\mathbb{R}))$ with finite $p^{th}$ moment, endowed with the $p$-Wasserstein distance $W_p$;
	\item[$\bullet$] $ \mathbb{L}^{p}$ is the collection of  real-valued $\mathscr{F}_T$-measurable random variables $\xi$ satisfying $\mathbf{E}\left[e^{p|\xi|}\right]<\infty$;
	\item[$\bullet$]   $\mathbb{S}^p$ is the collection
of all stochastic processes $Y$ such that $e^Y\in \mathcal{S}^p$;
\item[$\bullet$] $\mathbb{L}$ is the collection of all random variables $\xi\in\mathbb{L}^p$ for any $p\geq 1$, and   $\mathcal{H}^d$, $\mathcal{A}$ and $\mathbb{S}$ are defined in a similar way;
\item[$\bullet$] $\mathcal{T}_{t}$ is the collection of $[0,T]$-valued $\mathcal{F}$-stopping times  $\tau$  such that $\tau \geq t$ $\mathbf{P}$-a.s.;
\item[$\bullet$]
 $BMO$  is the collection of    $\mathbb{R}^d$-valued progressively measurable  processes $(z_t)_{0\leq t\leq T}$  such that
\begin{align*}
\|z\|_{BMO}:=\sup\limits_{\tau\in\mathcal{T}_0}\esssup\limits_{\omega\in\Omega}\mathbf{E}_{\tau}\bigg[\int^T_{\tau}|z_s|^2ds\bigg]^{\frac{1}{2}} < \infty.
\end{align*}
\end{description}\smallskip
   Denote by $\ell_{[a,b]}$ the corresponding collections for the stochastic processes {with} time indexes on $[a,b]$ for $\ell=\mathcal{H}^{p,d},\mathcal{S}^p,\mathcal{S}^\infty$ and so on. For each $Z\in BMO$, we set
 \[
  \mathscr{Exp}(Z\cdot B)_0^t=\exp\left(\int^t_0 Z_s dB_s-\frac{1}{2}\int^t_0|Z_s|^2ds\right),
 \]
which is a martingale  by
 \cite{K}. Thus it follows from Girsanov's theorem that  $(B_t-\int_{0}^tZ_sds)_{0\leq t\leq T}$
is a Brownian motion under the equivalent probability measure $\mathscr{Exp} (Z\cdot B)_{0}^T d\mathbf{P}$.
\section{A reminder in the Lipschitz case}

\subsection{Preliminaries}

{Let us start by giving the definition of a solution and some technical results, which will be frequently used in our subsequent discussions.}

\begin{definition}
By a solution to \eqref{my1}, we mean a  triple of progressively measurable processes $(Y, Z, K)$  such that \eqref{my1} holds.
\end{definition}

\noindent For each $\mathcal{F}$-stopping time $\tau$ taking values in $[0,T]$ and for every $\mathcal{F}_{\tau}$-measurable function $\eta\in  \mathcal{L}^p$ for some $p>1$, we first define the following $g$-evaluation (see \cite{P1}):
\[
\mathcal{E}_{t,\tau}^{g}[\eta]:=y^{\tau}_t, \ \ \forall t\in [0,T],
\]
where $y^{\tau}$ is the solution to the following  BSDE on the  random time horizon $[0,\tau]$
\begin{align}\label{myq2}
y^{\tau}_t=\eta+\int^{\tau}_t g(s,y^{\tau}_s,z^{\tau}_s)ds-\int^{\tau}_t z^{\tau}_s dB_s.
\end{align}
If the  BSDE \eqref{myq2} admits a unique solution $(Y,Z)\in \mathcal{S}^p\times\mathcal{H}^{p,d}$, then it is easy to check that
\[y^{\tau}_{t}=y^{\tau}_{t\wedge\tau}, z^{\tau}_t=z^{\tau}_t\mathbf{1}_{[0,\tau]}(t), \ \ \forall t\in [0,T].\]

\medskip \noindent Next, we introduce the following reflected BSDE:
\begin{align}\label{my3}
\begin{cases}
&Y_t=\eta+\int_t^T g(s,Y_s,Z_s)ds-\int_t^T Z_sdB_s+K_T-K_t, \quad 0\le t\le T,\\
& Y_t\geq L_t,\quad \forall t\in [0,T] \,\,\mbox{ and }\,\, \int_0^T (Y_t-L_t)dK_t = 0,
\end{cases}
\end{align}
where $\eta\in\mathcal{L}^p$, $L\in\mathcal{S}^p$ with $L_T\leq \eta$.
We assume that the driver $g$ satisfies the following comparison principle.
\begin{description}
\item[Comparison principle:]
For $i=1,2$, let $(y^i,z^i,v^i)\in\mathcal{S}^p\times\mathcal{H}^{p,d}\times\mathcal{A}^p$ be a solution to the following BSDE
\[
y^i=\xi^i+\int^T_tg(s,y^i_s,z^i_s)ds+v^i_T-v^i_t-\int^T_tz^i_sdB_s.
\]
If $\xi^1\geq \xi^2$ and $v^2\equiv 0$, then $y^1_t\geq y^2_t$ for every $t\in[0,T]$.
\end{description}
We have the following nonlinear Snell envelope  representation  for the solution of the reflected BSDE \eqref{my3}.

\begin{lemma}\label{my6} Let $(Y,Z,K)\in \mathcal{S}^p\times\mathcal{H}^{p,d}\times\mathcal{A}^p$ be a solution to the reflected BSDE \eqref{my3}. If $g$ satisfies the  comparison principle, then
\[
Y_t=\esssup_{\tau\in\mathcal{T}_{t}}\mathcal{E}_{t,\tau}^{g}[\eta\mathbf{1}_{\{\tau=T\}}+L_{\tau}\mathbf{1}_{\{\tau<T\}}].
\]
In particular, the reflected BSDE \eqref{my3} has at most one solution.
\end{lemma}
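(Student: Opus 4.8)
The plan is to sandwich $Y_t$ between the two inequalities defining the essential supremum, exhibiting an explicitly optimal stopping time for the lower bound, and then to read off uniqueness.

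\textbf{Upper bound.} Fix $t$ and an arbitrary $\tau\in\mathcal{T}_t$. Restricting the reflected equation \eqref{my3} to the stochastic interval $[t,\tau]$, the process $Y$ solves
\[
Y_s=Y_\tau+\int_s^\tau g(r,Y_r,Z_r)\,dr-\int_s^\tau Z_r\,dB_r+K_\tau-K_s,\qquad s\in[t,\tau],
\]
that is, a BSDE with terminal value $Y_\tau$ and the nondecreasing process $K$. Since $Y_T=\eta$ and $Y_s\geq L_s$ for all $s$, we have $Y_\tau\geq\eta\mathbf{1}_{\{\tau=T\}}+L_\tau\mathbf{1}_{\{\tau<T\}}$. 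The $g$-evaluation $y^\tau:=\mathcal{E}^{g}_{\cdot,\tau}[\eta\mathbf{1}_{\{\tau=T\}}+L_\tau\mathbf{1}_{\{\tau<T\}}]$ solves the same equation on $[t,\tau]$ with this terminal value and with $v\equiv0$. Applying the comparison principle on the random horizon $[0,\tau]$ (obtained from the stated one on $[0,T]$ by freezing the coefficients after $\tau$), with $v^1=K$ and $v^2\equiv0$, gives $Y_t\geq y^\tau_t$. Taking the essential supremum over $\tau\in\mathcal{T}_t$ yields $Y_t\geq\esssup_{\tau\in\mathcal{T}_t}\mathcal{E}^{g}_{t,\tau}[\eta\mathbf{1}_{\{\tau=T\}}+L_\tau\mathbf{1}_{\{\tau<T\}}]$.

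\textbf{Lower bound and optimality.} Introduce
\[
\tau^*_t:=\inf\{s\in[t,T]:Y_s=L_s\}\wedge T,
\]
which belongs to $\mathcal{T}_t$ by continuity of $Y-L$. On $[t,\tau^*_t)$ we have $Y_s>L_s$, so the Skorokhod condition $\int_0^T(Y_s-L_s)\,dK_s=0$ forces $dK=0$ there, and continuity of $K$ gives $K_{\tau^*_t}=K_t$. Hence $Y$ solves an unreflected BSDE on $[t,\tau^*_t]$ with terminal value $Y_{\tau^*_t}$; moreover, by path continuity, $Y_{\tau^*_t}=L_{\tau^*_t}$ on $\{\tau^*_t<T\}$ and $Y_{\tau^*_t}=\eta$ on $\{\tau^*_t=T\}$, i.e.\ $Y_{\tau^*_t}=\eta\mathbf{1}_{\{\tau^*_t=T\}}+L_{\tau^*_t}\mathbf{1}_{\{\tau^*_t<T\}}$. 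By uniqueness for the random-horizon BSDE \eqref{myq2}, $Y_t=\mathcal{E}^{g}_{t,\tau^*_t}[\eta\mathbf{1}_{\{\tau^*_t=T\}}+L_{\tau^*_t}\mathbf{1}_{\{\tau^*_t<T\}}]$, which gives the reverse inequality and shows the essential supremum is attained at $\tau^*_t$.

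\textbf{Uniqueness.} The right-hand side depends only on the data $(g,\eta,L)$, so $Y$ is uniquely determined. The semimartingale decomposition of $Y$ then identifies its martingale part $\int_\cdot^T Z_s\,dB_s$ and its finite-variation part $K$, so $Z$ and $K$ are unique as well. The main obstacle is twofold: justifying the comparison principle on the random horizon $[0,\tau]$ rather than on the fixed interval $[0,T]$ for which it is stated, and the local reading of the Skorokhod condition that yields flatness of $K$ on $[t,\tau^*_t]$ together with the boundary identification $Y_{\tau^*_t}=L_{\tau^*_t}$ on $\{\tau^*_t<T\}$; both rest on the path continuity of $Y$ and $L$.
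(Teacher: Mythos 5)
Your proposal is correct and follows essentially the same argument as the paper: the comparison principle applied on $[t,\tau]$ (using $Y_\tau\geq\eta\mathbf{1}_{\{\tau=T\}}+L_\tau\mathbf{1}_{\{\tau<T\}}$ and the monotonicity of $K$) for one inequality, and the stopping time $\tau^*=\inf\{s\in[t,T]:Y_s=L_s\}\wedge T$ together with the Skorokhod condition forcing $K_{\tau^*}=K_t$ for the attainment of the supremum. Your added remarks — extending the comparison principle to the random horizon by freezing coefficients after $\tau$, and deriving uniqueness of $(Z,K)$ from the semimartingale decomposition — merely make explicit steps the paper leaves implicit.
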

\begin{proof}
For any $\tau\in\mathcal{T}_{t}$, we have
\[
Y_s=Y_{\tau}+\int_s^{\tau} g(r,Y_r,Z_r)ds-\int_s^{\tau} Z_rdB_r+K_{\tau}-K_r, \ \ \forall s\in[t,\tau].
\]
Note that $Y_{\tau}\geq \eta\mathbf{1}_{\{\tau=T\}}+L_{\tau}\mathbf{1}_{\{\tau<T\}}$ and $K$ is a non-decreasing process. It follows from the comparison principle that
\[
Y_t\geq \mathcal{E}_{t,\tau}^{g}[\eta\mathbf{1}_{\{\tau=T\}}+L_{\tau}\mathbf{1}_{\{\tau<T\}}], \ \ \forall\tau\in\mathcal{T}_{t}.\]

\noindent On the other hand, we define the stopping time $\tau^*=\inf\{r\in[t,T]: Y_r=L_r\}\wedge T.$ Since $Y_r\geq L_r$ and $\int_t^T (Y_r-L_r)dK_r = 0$, we conclude that $K_{\tau^*}=K_t$, which indicates that
\[
Y_s=Y_{\tau^*}+\int_s^{\tau^*} g(r,Y_r,Z_r)ds-\int_s^{\tau^*} Z_rdB_r, \ \ \forall s\in[t,\tau^*].
\]
Note that $Y_{\tau^*}=\xi\mathbf{1}_{\{\tau^*=T\}}+L_{\tau^*}\mathbf{1}_{\{\tau^*<T\}}$ by the definition of $\tau^*$. It follows that
\[
Y_t=\mathcal{E}_{t,\tau^*}^{g}[\eta\mathbf{1}_{\{\tau^*=T\}}+L_{\tau^*}\mathbf{1}_{\{\tau^*<T\}}],
\]
which completes the proof.
\end{proof}

\begin{remark}{\upshape
It is obvious that the comparison principle and the nonlinear Snell envelope  representation  hold in the Lipschitz case. We refer to \cite{BY,KL} for some sufficient conditions under which the results hold for the quadratic case.
}
\end{remark}

\begin{remark}\label{my778}{\upshape
It follows from Lemma \ref{my6} that  any solution $Y$ to the {mean-field reflected} BSDE \eqref{my1} is a fixed point of the following map $\Gamma$:
\[
\Gamma(U)_t:=\esssup_{\tau\in\mathcal{T}_{t}} \, \mathcal{E}_{t,\tau}^{f^U}[\xi\mathbf{1}_{\{\tau=T\}}+h \left(\tau,U_{\tau},{(\mathbf{P}_{U_s})_{s=\tau}}\right) \mathbf{1}_{\{\tau<T\}}], \ \ \forall t\in[0,T],
\]
where the   driver {$f^U$ is given by} $f^U(t,z):=f(t,U_t,\mathbf{P}_{U_t},z)$.}
\end{remark}
\noindent The representation given by Lemma \ref{my6} is an important tool for our main results in the following sections. With the help of the representation, we will combine a linearization technique, a fixed point argument and the $\theta$-method to study quadratic mean-field reflected BSDEs.
The authors of \cite{DDZ} applied the representation result and a fixed point argument to prove existence and uniqueness of a solution for mean-field reflected BSDEs with jumps when the driver is Lipschitz. In order to illustrate our main idea and present some  preliminaries involved, we first deal with the Lipschitz case via a linearization technique and a fixed point method. As a byproduct, our argument removes a  domination condition \cite [Assumption 2.1 (ii)(b)]{DDZ}.

\subsection{Revisit to the Lipschitz case}
In what follows,  we make use of the following  conditions on the terminal condition $\xi$, the driver $f$ and the constraint $h$.
\begin{description}
	\item[(B1)] There exists a constant $p>1$ such that $\xi \in \mathcal{L}^p$ with $\xi\geq h(T,\xi,  \mathbf{P}_{\xi})$.
	\item[(B2)] The process $f(t,0,{\delta_0},0)$ belongs to $\mathcal{H}^{p,1}$ and  there exists a constant $\lambda>0$ such that for any $t\in[0,T]$, $y_1,y_2\in \mathbb{R}$, $v_1,v_2\in \mathcal{P}_1(\mathbb{R})$ and $z_1,z_2\in \mathbb{R}^{d}$
\begin{equation*}
|f(t,y_1,v_1,z_1)-f(t,y_2,v_2,z_2)|\leq \lambda \left( |y_1-y_2|+{W_1(v_1,v_2)}+|z_1-z_2| \right).
\end{equation*}
	\item[(B3)] The process $h(t,y,{v})$ belongs to $\mathcal{S}^{p}$ for any {$y\in\mathbb{R}$, $v\in \mathcal{P}_1(\mathbb{R})$} and there exist two constants $\gamma_1,\gamma_2>0$  such that for any  $t\in[0,T]$, { $y_1,y_2\in \mathbb{R}$, $v_1,v_2\in \mathcal{P}_1(\mathbb{R})$}
\begin{equation*}
|h(t,y_1,v_1)-h(t,y_2,v_2)|\leq \gamma_1 |y_1-y_2|+\gamma_2{W_1(v_1,v_2)}.
\end{equation*}
\end{description}

\noindent We are now ready to state the main result of this section.
\begin{theorem}\label{my1161}
Assume that \emph{(B1)}-\emph{(B3)} are satisfied.  If $\gamma_1$ and $\gamma_2$ satisfy   \begin{align}\label{myq9501}
(\gamma_1+\gamma_2)^{\frac{p-1}{p}}\left(\bigg(\frac{p}{p-1}\bigg)^{p}\gamma_1+\gamma_2\right)^{\frac{1}{p}}<1,
\end{align}
then the {mean-field reflected} BSDE \eqref{my1}  admits a unique
 solution $(Y,Z,K)\in \mathcal{S}^{p}\times \mathcal{H}^{p,d}\times\mathcal{A}^p$.
\end{theorem}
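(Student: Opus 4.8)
The plan is to realize $Y$ as the unique fixed point of the map $\Gamma$ of Remark~\ref{my778} and then recover $(Z,K)$ from the associated (genuinely $z$-dependent, but $z$-Lipschitz) reflected BSDE. First I would check that $\Gamma$ is a well-defined self-map of $\mathcal{S}^p$: for $U\in\mathcal{S}^p$ the frozen driver $f^U(t,z)=f(t,U_t,\mathbf{P}_{U_t},z)$ is $\lambda$-Lipschitz in $z$ and, by (B2) together with $W_1(\mathbf{P}_{U_t},\delta_0)=\mathbf{E}|U_t|$, satisfies $f^U(\cdot,0)\in\mathcal{H}^{p,1}$, while the frozen obstacle $h(\cdot,U_\cdot,\mathbf{P}_{U_\cdot})$ lies in $\mathcal{S}^p$ by (B3). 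Hence the reflected BSDE \eqref{my3} with these data is well posed, and by Lemma~\ref{my6} its first component is exactly $\Gamma(U)\in\mathcal{S}^p$.

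The heart of the argument is to show $\Gamma$ is a contraction for an equivalent norm on $\mathcal{S}^p$. Given $U^1,U^2\in\mathcal{S}^p$, I would represent $Y^i=\Gamma(U^i)$ through the Snell envelope and use $\esssup_\tau A_\tau-\esssup_\tau B_\tau\le\esssup_\tau(A_\tau-B_\tau)$ to obtain, for each $t$,
\[
|Y^1_t-Y^2_t|\le\esssup_{\tau\in\mathcal{T}_t}\bigl|\mathcal{E}^{f^{U^1}}_{t,\tau}[\Phi^1_\tau]-\mathcal{E}^{f^{U^2}}_{t,\tau}[\Phi^2_\tau]\bigr|,\qquad \Phi^i_\tau=\xi\mathbf{1}_{\{\tau=T\}}+h(\tau,U^i_\tau,\mathbf{P}_{U^i_\tau})\mathbf{1}_{\{\tau<T\}}.
\]
For fixed $\tau$ I would set $\delta y=y^{1,\tau}-y^{2,\tau}$, $\delta z=z^{1,\tau}-z^{2,\tau}$ and linearize the driver difference as $f^{U^1}(s,z^{1,\tau}_s)-f^{U^2}(s,z^{2,\tau}_s)=\langle b_s,\delta z_s\rangle+\rho_s$, with $|b_s|\le\lambda$ and $|\rho_s|\le\lambda(|U^1_s-U^2_s|+\mathbf{E}|U^1_s-U^2_s|)$. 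Since $b$ is bounded, hence in $BMO$, Girsanov's theorem removes the $\delta z$ term and gives $\delta y_t=\mathbf{E}^{\mathbf{Q}}_{t}\bigl[\delta\Phi_\tau+\int_t^\tau\rho_s\,ds\bigr]$ under $d\mathbf{Q}=\mathscr{Exp}(b\cdot B)_0^T\,d\mathbf{P}$ (equivalently, one keeps the exponential weight and stays under $\mathbf{P}$).

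It then remains to take norms and track two contributions. The obstacle part is controlled by (B3), $|\delta\Phi_\tau|\le\gamma_1|U^1_\tau-U^2_\tau|+\gamma_2\mathbf{E}|U^1_\tau-U^2_\tau|$; bounding $|U^1_\tau-U^2_\tau|\le\sup_s|U^1_s-U^2_s|$ and applying Doob's $L^p$ inequality to the resulting conditional-expectation martingale produces the factor $(\tfrac{p}{p-1})^p$ attached to $\gamma_1$, while the deterministic mean term carries only $\gamma_2$, and a Hölder split of the pathwise and distributional parts should account for the outer exponents $\tfrac{p-1}{p}$ and $\tfrac1p$, yielding precisely the constant in \eqref{myq9501}. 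The source term $\int_t^\tau\rho_s\,ds$, which carries $\lambda$ and the horizon $T$, is handled separately: passing to an equivalent weighted norm $\mathbf{E}[\sup_t e^{p\beta t}|U_t|^p]^{1/p}$ leaves the obstacle contribution unaffected (because $\tau\ge t$ forces $e^{\beta t}|\delta U_\tau|\le e^{\beta\tau}|\delta U_\tau|$) while shrinking the driver contribution to $O(\lambda/\beta)$, so it is absorbed for $\beta$ large. With \eqref{myq9501} ensuring the obstacle constant is $<1$, $\Gamma$ is a contraction; the Banach fixed point theorem gives a unique $Y\in\mathcal{S}^p$, the associated reflected BSDE supplies $(Z,K)\in\mathcal{H}^{p,d}\times\mathcal{A}^p$, and uniqueness of the triple follows since any solution is a fixed point of $\Gamma$ and the inner reflected BSDE has at most one solution by Lemma~\ref{my6}.

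I expect the decisive step to be the third one: obtaining the sharp constant \eqref{myq9501} requires cleanly disentangling the pathwise ($\gamma_1$) and distributional ($\gamma_2$) obstacle contributions, matching the Doob constant and the conjugate Hölder exponents, and—most delicately—ensuring that the change of measure (the $BMO$ bound on $b$, whose $\mathbf{Q}$ depends on $\tau$ and on the iterates) together with the weighted-norm absorption of the $\lambda$- and $T$-dependent driver term does not contaminate that constant.
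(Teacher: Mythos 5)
Your basic estimate (Snell representation, linearization of the $z$-difference, Girsanov, Doob) is exactly the paper's, and your treatment of well-posedness of $\Gamma$, recovery of $(Z,K)$, and uniqueness is also the paper's. The gap is in your global strategy, at precisely the step you yourself flag as decisive. After Girsanov you have $\delta y_t=\mathbf{E}^{\mathbf{Q}}_t\big[\delta\Phi_\tau+\int_t^\tau\rho_s\,ds\big]$, but the contraction must be measured in $\mathcal{S}^p(\mathbf{P})$. Converting $\mathbf{E}^{\mathbf{Q}}_t$ back to $\mathbf{E}_t=\mathbf{E}^{\mathbf{P}}_t$ requires a H\"older step: with inner exponent $\mu\in(1,p)$ one pays the factor $\mathbf{E}_t\big[|\mathscr{Exp}(b\mathbf{1}_{[0,\tau]}\cdot B)_t^T|^{\mu/(\mu-1)}\big]^{(\mu-1)/\mu}\leq\exp\big(\tfrac{\lambda^2(T-t)}{2(\mu-1)}\big)$, and Doob's inequality is then applied with exponent $p/\mu$, producing the constant $\big(\tfrac{p}{p-\mu}\big)^{p/\mu}$, not $\big(\tfrac{p}{p-1}\big)^{p}$ as you assert. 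These two effects pull in opposite directions: to approach the sharp Doob constant in \eqref{myq9501} you must send $\mu\to1$, but then $\exp\big(\tfrac{\lambda^2 T}{2(\mu-1)}\big)\to\infty$ on a fixed horizon. Your weighted norm does not resolve this: as you describe it, the weight $e^{\beta t}$ only absorbs the additive source term $\int_t^\tau\rho_s\,ds$ (giving $O(\lambda/\beta)$), whereas the Girsanov--H\"older factor is multiplicative, sits in front of the obstacle terms $\gamma_1,\gamma_2$ as well, is independent of $\beta$, and does not shrink as $\beta\to\infty$. Since \eqref{myq9501} allows the target constant to be arbitrarily close to $1$, any residual factor strictly larger than $1$, or any Doob constant with $\mu$ bounded away from $1$, destroys the contraction; so the proposed single fixed point on all of $[0,T]$ does not close.

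The paper resolves exactly this tension by localizing in time rather than reweighting: in Lemma \ref{my3.5} the contraction is proved on $[T-h,T]$ with $h\leq(\mu-1)^2$, so that the measure-change cost $\exp\big(\tfrac{p\lambda^2 h}{2(\mu-1)}\big)\leq\exp\big(\tfrac{p\lambda^2(\mu-1)}{2}\big)$ and the $\lambda h$ corrections vanish as $\mu\to1$, the contraction constant $\Lambda(\mu)$ converges to the left-hand side of \eqref{myq9501}, and one fixes $\mu^*$ with $\Lambda(\mu^*)<1$ and $\delta=(\mu^*-1)^2$; Theorem \ref{my1161} is then obtained by pasting finitely many local solutions, which works because $\delta$ depends only on $p,\lambda,\gamma_1,\gamma_2$ and not on the data. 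Your proposal omits this localization and pasting structure entirely. For completeness, a weighted-norm argument can in principle be repaired, but only by making the weight absorb the Girsanov cost itself, e.g. via the supermartingale bound $e^{\beta t}\mathbf{E}_t\big[\big(\mathscr{Exp}(b\mathbf{1}_{[0,\tau]}\cdot B)_t^T e^{-\beta\tau}\big)^{\mu/(\mu-1)}\big]^{(\mu-1)/\mu}\leq1$, valid when $\beta\geq\tfrac{\lambda^2}{2(\mu-1)}$; that is a genuinely different mechanism from the one you invoke (monotonicity $e^{\beta t}|\delta U_\tau|\leq e^{\beta\tau}|\delta U_\tau|$ plus an $O(\lambda/\beta)$ driver bound), and without it the step fails.
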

\begin{remark}{\upshape
We remove the additional domination condition from \cite [Assumption 2.1 (ii)(b)]{DDZ}, that requires: \[\text{The process $\sup_{(y,v)\in\mathbb{R}\times\mathcal{P}_1(\mathbb{R})}|h(t,y,v)|$ belongs to $\mathcal{S}^{p}$.} \]
}
\end{remark}
\begin{remark}\label{my996}{\upshape
Note that the enhanced sufficient condition \eqref{myq9501} is the same  as in \cite[Theorem 3.1]{DES}. The authors of \cite{DDZ} proved that mean-field reflected BSDEs with jumps admit a unique solution  under the following enhanced sufficient condition \[
2^{p-1}(\gamma_1^p+\gamma_2^p)\leq 1.
\]
When $\gamma_1=0$, it reduces to the condition $\gamma_2\leq 2^{\frac{1}{p}-1}$, whereas \eqref{myq9501}  reduces to the condition $\gamma_2\leq 1$. On the other hand, it is easy to check that
\[
(\gamma_1+\gamma_2)^{p-1}\left(\bigg(\frac{p}{p-1}\bigg)^{p}\gamma_1+\gamma_2\right)\geq 2^{p-1}(\gamma_1^p+\gamma_2^p)
\]
when $\gamma_1=\gamma_2.$
 }
\end{remark}

\noindent We are now ready to prove Theorem \ref{my1161}. More precisely, we first state the existence and uniqueness of the solution on a small time interval $[T-h, T]$, in which $h$ is to be determined later. Then, we stitch the local solutions to build the global solution.\medskip

\noindent According to Assumption (B3), it is obvious that $(h(s,U_s,\mathbf{P}_{U_s}))_{s\in [T-h, T]} \in \mathcal{S}^p_{[T-h, T]}$ for any $U\in \mathcal{S}^p_{ [T-h, T]} $. It follows from Lemma \ref{my6} and \cite[Theorem 3]{HP} that $\Gamma(U)$ is the  $\mathcal{S}^p$-solution to the reflected BSDE \eqref{my3} with data $(\eta,g,L)=(\xi,f^U,h(\cdot,U_\cdot,\mathbf{P}_{U_\cdot}))$. Thus for any $h\in (0,T]$, we have \[\Gamma\left(\mathcal{S}^p_{[T-h, T]} \right)\subset \mathcal{S}^p_{[T-h, T]}.\]

\noindent Let us now show uniqueness and existence of the local solution for the mean-field reflected BSDE \eqref{my1}.

\begin{lemma}\label{my3.5}
Assume that \emph{(B1)}-\emph{(B3)} hold.  If $\gamma_1$ and $\gamma_2$ satisfy \eqref{myq9501}, then there exists a constant $\delta>0$ depending only on $p,\lambda,\gamma_1$ and $\gamma_2$ such that for any $h\in(0,\delta]$, the {mean-field reflected} BSDE \eqref{my1} admits a unique solution $(Y,Z,K)\in \mathcal{S}^{p}_{[T-h,T]}\times \mathcal{H}^{p,d}_{[T-h,T]}\times\mathcal{A}^p_{[T-h,T]}$ on the time interval $[T-h,T]$.
\end{lemma}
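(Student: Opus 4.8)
The plan is to realise the local solution as the unique fixed point of the map $\Gamma$ of Remark \ref{my778}, now regarded on the complete metric space $\mathcal{S}^p_{[T-h,T]}$. We already know that $\Gamma(\mathcal{S}^p_{[T-h,T]})\subset\mathcal{S}^p_{[T-h,T]}$, so it suffices to prove that $\Gamma$ is a contraction once $h$ is small. Granting this, Banach's fixed point theorem yields a unique $Y\in\mathcal{S}^p_{[T-h,T]}$ with $\Gamma(Y)=Y$; by Lemma \ref{my6} and the very construction of $\Gamma$, this $Y$ is the first component of the solution to the reflected BSDE \eqref{my3} with data $(\xi,f^Y,h(\cdot,Y_\cdot,\mathbf{P}_{Y_\cdot}))$, which supplies the remaining components $(Z,K)\in\mathcal{H}^{p,d}_{[T-h,T]}\times\mathcal{A}^p_{[T-h,T]}$ and is precisely \eqref{my1} on $[T-h,T]$. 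Conversely, any local solution has its $Y$-component fixed by $\Gamma$, while $(Z,K)$ are then uniquely determined by Lemma \ref{my6}; this gives uniqueness. Note that (B1) guarantees $h(T,\xi,\mathbf{P}_\xi)\le\xi$, so the data are admissible in \eqref{my3}.

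For the contraction I fix $U,U'\in\mathcal{S}^p_{[T-h,T]}$ and set $X:=\sup_{s\in[T-h,T]}|U_s-U'_s|$, so that $\|X\|_{\mathcal{L}^p}=\|U-U'\|_{\mathcal{S}^p}$. Since $|\esssup_\tau A^\tau-\esssup_\tau B^\tau|\le\esssup_\tau|A^\tau-B^\tau|$, it is enough to estimate, for each $\tau\in\mathcal{T}_t$, the difference $(\delta y^\tau,\delta z^\tau)$ of the two $g$-evaluations $\mathcal{E}^{f^U}_{\cdot,\tau}[\cdots]$ and $\mathcal{E}^{f^{U'}}_{\cdot,\tau}[\cdots]$ taken at the \emph{same} $\tau$. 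Following the linearization technique rather than It\^o's formula, I split the driver difference as $f^U(s,y^\tau_s,z^\tau_s)-f^U(s,\bar y^\tau_s,\bar z^\tau_s)$ plus $f^U(s,\bar y^\tau_s,\bar z^\tau_s)-f^{U'}(s,\bar y^\tau_s,\bar z^\tau_s)$, where $(\bar y^\tau,\bar z^\tau)$ denotes the $\mathcal{E}^{f^{U'}}$-solution. By (B2) the first bracket equals $a^\tau_s\,\delta y^\tau_s+\langle b^\tau_s,\delta z^\tau_s\rangle$ with $a^\tau,b^\tau$ progressively measurable and bounded by $\lambda$, while the second is a source $\phi^\tau_s$ satisfying $|\phi^\tau_s|\le\lambda(|U_s-U'_s|+W_1(\mathbf{P}_{U_s},\mathbf{P}_{U'_s}))$. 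Thus $\delta y^\tau$ solves a linear BSDE on $[0,\tau]$ with bounded coefficients, source $\phi^\tau$, and terminal value $\delta\xi_\tau=\big(h(\tau,U_\tau,\mathbf{P}_{U_\tau})-h(\tau,U'_\tau,\mathbf{P}_{U'_\tau})\big)\mathbf{1}_{\{\tau<T\}}$ controlled by (B3).

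Because $b^\tau$ is bounded it lies in $BMO$ with $\|b^\tau\|_{BMO}\le\lambda\sqrt h$, so $\mathscr{Exp}(b^\tau\cdot B)$ is a true martingale defining an equivalent measure $\mathbb{Q}^\tau$, and the linear representation gives $\delta y^\tau_t=\mathbf{E}^{\mathbb{Q}^\tau}_t\big[e^{\int_t^\tau a^\tau_r dr}\delta\xi_\tau+\int_t^\tau e^{\int_t^s a^\tau_r dr}\phi^\tau_s\,ds\big]$. Using $e^{\int_t^s a^\tau_r dr}\le e^{\lambda h}$, the coupling bound $W_1(\mathbf{P}_{U_s},\mathbf{P}_{U'_s})\le\mathbf{E}[|U_s-U'_s|]\le\mathbf{E}[X]$ (a deterministic quantity), and $|\delta\xi_\tau|\le\gamma_1 X+\gamma_2\mathbf{E}[X]$, I arrive at the pointwise bound $|\Gamma(U)_t-\Gamma(U')_t|\le e^{\lambda h}\big[(\gamma_1+\lambda h)\,\esssup_\tau\mathbf{E}^{\mathbb{Q}^\tau}_t[X]+(\gamma_2+\lambda h)\,\mathbf{E}[X]\big]$. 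To pass to an $\mathcal{S}^p$-estimate with the sharp constant I would apply, in this order, the power-mean convexity inequality $(\gamma_1 a+\gamma_2 b)^p\le(\gamma_1+\gamma_2)^{p-1}(\gamma_1 a^p+\gamma_2 b^p)$, then Doob's $L^p$ maximal inequality to the martingale $t\mapsto\mathbf{E}^{\mathbb{Q}^\tau}_t[X]$ (this is exactly where the factor $(\tfrac{p}{p-1})^p$ is produced), and finally Jensen's inequality $(\mathbf{E}[X])^p\le\mathbf{E}[X^p]$ for the deterministic $\gamma_2$-term.

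The step I expect to be the main obstacle is the residual dependence of $\mathbb{Q}^\tau$ on $\tau$ inside $\esssup_\tau\mathbf{E}^{\mathbb{Q}^\tau}_t[X]$: Doob's inequality is natural under each individual $\mathbb{Q}^\tau$, but the supremum over $\tau$ prevents working under a single measure. I would remove this via the $BMO$/reverse-Hölder estimates recorded among the technical lemmas of Section 2, which exchange $\mathbb{Q}^\tau$-expectations for $\mathbf{P}$-expectations at the cost of a multiplicative factor depending only on $p$ and $\|b^\tau\|_{BMO}\le\lambda\sqrt h$, a factor tending to $1$ as $h\to0$. Together with the $O(h)$ source contributions and the $e^{\lambda h}$ prefactors, all of which vanish in the limit, the resulting contraction constant $C(h)$ satisfies $C(h)\to(\gamma_1+\gamma_2)^{\frac{p-1}{p}}\big((\tfrac{p}{p-1})^p\gamma_1+\gamma_2\big)^{\frac1p}<1$ by \eqref{myq9501}. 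By continuity there is a $\delta>0$, depending only on $p,\lambda,\gamma_1,\gamma_2$, with $C(h)<1$ for all $h\in(0,\delta]$, so $\Gamma$ is a contraction and the lemma follows.
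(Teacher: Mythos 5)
Your architecture is the paper's own: represent $\Gamma(U^i)$ through Lemma \ref{my6}, compare the two $g$-evaluations at the \emph{same} stopping time, linearize the driver difference, change measure by Girsanov, then run the convexity inequality, Doob and Jensen to get a contraction constant tending to $(\gamma_1+\gamma_2)^{\frac{p-1}{p}}\big((\tfrac{p}{p-1})^{p}\gamma_1+\gamma_2\big)^{\frac1p}$, and finish with Banach's fixed point theorem, recovering $(Z,K)$ from the reflected BSDE \eqref{my3} with frozen data. All of this matches the paper (your $y$-linearization coefficient $a^\tau$ is vacuous here, since in $f^U$ the $y$-argument is frozen at $U_t$, so $a^\tau\equiv0$; it is harmless).

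There is, however, a genuine gap at exactly the step you flag as the main obstacle, and your proposed repair does not work as stated. The exchange of $\mathbf{E}^{\mathbb{Q}^\tau}_t$ for $\mathbf{E}_t$ cannot cost only a multiplicative factor: $\mathscr{Exp}(b^\tau\cdot B)_t^T$ is unbounded even when $|b^\tau|\le\lambda$ and the horizon is short, so no inequality $\mathbf{E}_t\big[\mathscr{Exp}(b^\tau\cdot B)_t^T\,X\big]\le C\,\mathbf{E}_t[X]$ with $C$ independent of $X$ can hold (also, Section 2 of the paper contains no reverse-H\"older lemmas to invoke). Any correct exchange loses an exponent: what the paper proves, from conditional H\"older and the elementary bound $\mathbf{E}_t\big[|\mathscr{Exp}(\beta\mathbf{1}_{[0,\tau]}\cdot B)_t^T|^q\big]\le\exp\big(\tfrac{\lambda^2}{2}(q^2-q)(T-t)\big)$, is
\[
\mathbf{E}^{\mathbb{Q}^\tau}_t[X]\ \le\ \exp\Big(\tfrac{\lambda^2 h}{2(\mu-1)}\Big)\,\mathbf{E}_t[X^\mu]^{\frac1\mu},\qquad \mu\in(1,p),
\]
uniformly in $\tau$ (which also disposes of the $\esssup_\tau$ issue). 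Consequently Doob must be applied at exponent $p/\mu$ to the $\mathbf{P}$-martingale $t\mapsto\mathbf{E}_t[X^\mu]$, and it produces $(\tfrac{p}{p-\mu})^{p/\mu}$, \emph{not} $(\tfrac{p}{p-1})^{p}$. For fixed $\mu$ the exchange factor does tend to $1$ as $h\to0$, but then the limiting contraction constant is $(\gamma_1+\gamma_2)^{\frac{p-1}{p}}\big(\gamma_1(\tfrac{p}{p-\mu})^{p/\mu}+\gamma_2\big)^{\frac1p}$, strictly larger than the quantity in \eqref{myq9501} (since $\gamma_1>0$); conversely, sending $\mu\downarrow1$ for fixed $h$ blows up the exchange factor. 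So your ``by continuity there is a $\delta$'' conclusion needs a joint choice of the two parameters: first use \eqref{myq9501} and continuity in $\mu$ to fix $\mu^*\in(1,p)$ close enough to $1$, then restrict to $h\le(\mu^*-1)^2$ so that $\exp\big(\tfrac{p\lambda^2h}{2(\mu^*-1)}\big)$ and the $O(h)$ terms remain under control. This coupling is precisely the paper's definition $\delta=(\mu^*-1)^2$ with $\Lambda(\mu^*)<1$, and it is the crux of why the threshold $\delta$ exists; with it inserted, your proof coincides with the paper's, and without it the claimed limit for $C(h)$ is unjustified.
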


\begin{proof} The proof will be divided into three steps.\medskip

\noindent {\bf Step 1 (A priori estimate).} Let $U^i\in \mathcal{S}^p$, $i=1,2$.
It follows from Lemma \ref{my6} that
\begin{align}\label{my1001}
\Gamma(U^i)_t:=\esssup_{\tau\in\mathcal{T}_{t}} y^{i,\tau}_t, \ \ \forall t\in[0,T],
\end{align}
in which $y^{i,\tau}_t$ is the solution of the following  BSDE
 \begin{align}\label{myq528}
y^{i,\tau}_t=\xi\mathbf{1}_{\{\tau=T\}}+h (\tau,U^i_{\tau},(\mathbf{P}_{U^i_s})_{s=\tau}) \mathbf{1}_{\{\tau<T\}}+\int^{\tau}_t f(s,U^{i}_s,\mathbf{P}_{U^i_s},z^{i,\tau}_s)ds-\int^{\tau}_t z^{i,\tau}_s dB_s.
\end{align}
For each $t\in[0,T]$, denote by \[
\beta_t=\frac{f^{U^1}(t,z^{1,\tau}_t)-f^{U^1}(t,z^{2,\tau}_t)}{|z^{1,\tau}_t-z^{2,\tau}_t|^2}(z^{1,\tau}_t-z^{2,\tau}_t)\mathbf{1}_{\{|z^{1,\tau}_t-z^{2,\tau}_t|\neq 0\}}.
\]
Then, the pair of processes $(y^{1,\tau}-y^{2,\tau},z^{1,\tau}-z^{2,\tau})$ solves the following BSDE:
{ \begin{align*}
y^{1,\tau}_t-y^{2,\tau}_t &= h (\tau,U^1_{\tau},(\mathbf{P}_{U^1_s})_{s=\tau}) \mathbf{1}_{\{\tau<T\}}-h (\tau,U^2_{\tau},(\mathbf{P}_{U^2_s})_{s=\tau}) \mathbf{1}_{\{\tau<T\}}-\int^T_t\mathbf{1}_{[0,\tau]}(s)(z^{1,\tau}_s-z^{2,\tau}_s)dB_s\\
&\hspace*{2cm} +\int^T_t\left(\beta_s(z^{1,\tau}_s-z^{2,\tau}_s)^{\top}+f^{U^1}(s,z^{2,\tau}_s)-f^{U^2}(s,z^{2,\tau}_s)\right)\mathbf{1}_{[0,\tau]}(s)ds.
\end{align*}
}

\noindent Note that $\widetilde {B}_t:=B_t-\int_{0}^t\beta^{\top}_s\mathbf{1}_{[0,\tau]}(s)ds$, defines a Brownian motion under the equivalent probability measure $\widetilde{\mathbf{P}}$ given by
$d\widetilde{\mathbf{P}}: = \mathscr{Exp} (\beta\mathbf{1}_{[0,\tau]}\cdot B)_{0}^Td\mathbf{P}$.
It follows that for every $t\in[0,T]$
\begin{align*}
y^{1,\tau}_t-y^{2,\tau}_t
&=\mathbf{E}^{\widetilde{\mathbf{P}}}_t\bigg[h (\tau,U^1_{\tau},(\mathbf{P}_{U^1_s})_{s=\tau}) \mathbf{1}_{\{\tau<T\}}-h (\tau,U^2_{\tau},(\mathbf{P}_{U^2_s})_{s=\tau}) \mathbf{1}_{\{\tau<T\}} \\
&\hspace*{6cm}+\int^{\tau}_t \left(f^{U^1}(s,z^{2,\tau}_s)-f^{U^2}(s,z^{2,\tau}_s)\right)ds\bigg]\\
&=\mathbf{E}_t\bigg[\mathscr{Exp} (\beta\mathbf{1}_{[0,\tau]}\cdot B)_{t}^T\bigg(h (\tau,U^1_{\tau},(\mathbf{P}_{U^1_s})_{s=\tau}) \mathbf{1}_{\{\tau<T\}}-h (\tau,U^2_{\tau},(\mathbf{P}_{U^2_s})_{s=\tau}) \mathbf{1}_{\{\tau<T\}}\\
&\hspace*{6cm} +\int^{\tau}_t \left(f^{U^1}(s,z^{2,\tau}_s)-f^{U^2}(s,z^{2,\tau}_s)\right)ds\bigg)\bigg].
\end{align*}
Noting that  $|\beta_t| \leq \lambda$ and by a standard computation, we have that for any $q\geq 1$,
\[
\mathbf{E}_t\left[ \big|\mathscr{Exp} (\beta\mathbf{1}_{[0,\tau]}\cdot B)_{t}^T\big|^q\right]\leq \exp\bigg(\frac{\lambda^2}{2}(q^2-q)(T-t)\bigg) .
\]
In view of H\"{o}lder's inequality, we have for any $\mu\in (1,p)$ and any $t\in[T-h,T]$,
\begin{align*}
&|y^{1,\tau}_t-y^{2,\tau}_t|\\
&\!\!\leq \exp\bigg(\frac{\lambda^2h}{2(\mu-1)}\bigg)\mathbf{E}_t\bigg[\bigg((\gamma_1+\lambda h)\sup\limits_{s\in[T-h,T]}|U_s^1-U^2_s|+(\gamma_2+\lambda h)\sup\limits_{s\in[T-h,T]}\mathbf{E}[|U_s^1-U^2_s|]\bigg)^\mu\bigg]^{\frac{1}{\mu}},
\end{align*}
which together with  \eqref{my1001}  implies the following, for any $t\in[T-h,T]$
 { \begin{align}\label{myq9601}\begin{split}
&|\Gamma(U^1)_t-\Gamma(U^2)_t|^p\\
&\leq \exp\bigg(\frac{p\lambda^2 h}{2(\mu-1)}\bigg)\mathbf{E}_t\bigg[\bigg((\gamma_1+\lambda h)\sup\limits_{s\in[T-h,T]}|U^1_s-U^2_s|+(\gamma_2+\lambda h)\sup\limits_{s\in[T-h,T]}\mathbf{E}[|U_s^1-U^2_s|]\bigg)^\mu\bigg]^{\frac{p}{\mu}}.
\end{split}
\end{align}
}

\noindent {\bf Step 2 (The contraction).}
 {The convexity inequality} $(ax+by)^\rho\leq (a+b)^{\rho-1}(ax^\rho+by^\rho)$ holds for any non-negative constants $a,b,x$, $y$ and $\rho\geq 1$. It follows that
\begin{align*}
&\mathbf{E}_t\bigg[\bigg((\gamma_1+\lambda h)\sup\limits_{s\in[T-h,T]}|U^1_s-U^2_s|+(\gamma_2+\lambda h)\sup\limits_{s\in[T-h,T]}\mathbf{E}[|U_s^1-U^2_s|]\bigg)^\mu\bigg]^{\frac{p}{\mu}}\\
&\leq (\gamma_1+\gamma_2+2\lambda h)^{\frac{p(\mu-1)}{\mu}}
\bigg((\gamma_1+\lambda h)\mathbf{E}_t\bigg[\sup\limits_{s\in[T-h,T]}|U^1_s-U^2_s|^\mu\bigg]+(\gamma_2+\lambda h)\sup\limits_{s\in[T-h,T]}\mathbf{E}[|U_s^1-U^2_s|]^\mu\bigg)^{\frac{p}{\mu}}\\
&\leq (\gamma_1+\gamma_2+2\lambda h)^{p-1}\bigg((\gamma_1+\lambda h)\mathbf{E}_t\bigg[\sup\limits_{s\in[T-h,T]}|U^1_s-U^2_s|^\mu\bigg]^{\frac{p}{\mu}}+(\gamma_2+\lambda h)\sup\limits_{s\in[T-h,T]}\mathbf{E}[|U_s^1-U^2_s|]^p\bigg).
\end{align*}
Recalling \eqref{myq9601} and
applying {Doob's maximal inequality}, we derive
\begin{align*}
&\mathbf{E}\bigg[\sup\limits_{t\in[T-h,T]}|\Gamma(U^1)_t-\Gamma(U^2)_t|^p\bigg]\leq
\exp\bigg(\frac{p\lambda^2h}{2(\mu-1)}\bigg)(\gamma_1+\gamma_2+2\lambda h)^{p-1}\\
&\hspace*{2cm}\times \bigg((\gamma_1+\lambda h)\bigg(\frac{p}{p-\mu}\bigg)^{\frac{p}{\mu}}\mathbf{E}\bigg[\sup\limits_{s\in[T-h,T]}|U^1_s-U^2_s|^p\bigg]+(\gamma_2+\lambda h)\sup\limits_{s\in[T-h,T]}\mathbf{E}[|U_s^1-U^2_s|]^p\bigg).
\end{align*}

\noindent Consequently, for any $\mu\in(1,p)$ and $h\in(0,(\mu-1)^2]$, we have
 \begin{align*}
\mathbf{E}\bigg[\sup\limits_{t\in[T-h,T]}|\Gamma(U^1)_t-\Gamma(U^2)_t|^p\bigg]^{\frac{1}{p}}\leq \Lambda(\mu)
\mathbf{E}\bigg[\sup\limits_{s\in[T-h,T]}|U^1_s-U^2_s|^p\bigg]^{\frac{1}{p}}
\end{align*}
with
\[
\Lambda(\mu)=\exp\left(\frac{\lambda^2(\mu-1)}{2}\right)(\gamma_1+\gamma_2+2\lambda (\mu-1)^2)^{\frac{p-1}{p}}\bigg((\gamma_1+\lambda(\mu-1)^2)\bigg(\frac{p}{p-\mu}\bigg)^{\frac{p}{\mu}}+(\gamma_2+\lambda (\mu-1)^2)\bigg)^{\frac{1}{p}}.
\]
Under Assumption \eqref{myq9501}, we can then find a small enough constant $\mu^*\in(1,p)$ depending only on $p,\lambda,\gamma_1$ and $\gamma_2$ such that $\Lambda (\mu^*) < 1.$
Let us define \begin{equation}\label{defdelta}
{\delta}:=(\mu^*-1)^2.
\end{equation}
It is now obvious that  $\Gamma$ is a contraction map on the time interval $[T-h,T]$ for any $h\in(0,\delta]$. \medskip

\noindent {\bf Step 3 (Uniqueness and existence).} Note that any solution $Y$ to the {mean-field reflected} BSDE~\eqref{my1} is a fixed point of the map $\Gamma$.
For any $h\in(0,\delta]$, $\Gamma$ has a unique fixed point $Y\in\mathcal{S}^p_{[T-h,T]}$, so that \[
Y_t=\esssup_{\tau\in\mathcal{T}_{t}}\mathcal{E}_{t,\tau}^{f^Y}[\xi\mathbf{1}_{\{\tau=T\}}+h(\tau,Y_{\tau},{(\mathbf{P}_{Y_s})_{s=\tau}})\mathbf{1}_{\{\tau<T\}}], \ \ \forall t\in[T-h,T].
\]
On the other hand, the reflected BSDE \eqref{my3} with data $(\eta,g,L)=(\xi,f^Y,h(\cdot,Y_\cdot,\mathbf{P}_{Y_\cdot}))$ admits a unique solution
\[(\widetilde{Y},{Z},{K})\in \mathcal{S}^{p}_{[T-h,T]}\times \mathcal{H}^{p,d}_{[T-h,T]}\times\mathcal{A}^p_{[T-h,T]}.\] It follows from Lemma \ref{my6} that $\widetilde{Y}=\Gamma(Y)=Y$, which implies that
$(Y,Z,K)$ is a solution to the {mean-field reflected} BSDE \eqref{my1}  on the time interval $[T-h,T].$\medskip

\noindent Let us now turn to the proof of uniqueness. Suppose $(Y^{\prime},Z^{\prime},K^{\prime})$ is also a solution to the {mean-field reflected} BSDE \eqref{my1}  on the time interval $[T-h,T]$. In the spirit of Lemma \ref{my6}, $Y^{\prime}$ is the fixed point of the map $\Gamma$, which indicates that $Y=Y^{\prime}$. Applying It\^o's formula to $\left|Y-Y^{\prime}\right|^2$ yields that $Z=Z^{\prime}$ and then $K=K^{\prime}$. This completes the proof.
\end{proof}

\bigskip\noindent Now we are in a position to complete the proof of the main result.\medskip

\begin{proof}[Proof of Theorem \ref{my1161}] The uniqueness of the global solution on $[0, T]$ is inherited from the uniqueness of the local solution on each small time interval. It suffices to prove the existence.\medskip

\noindent By Lemma \ref{my3.5}, there exists a constant $\delta>0$ depending only on $p$, $\lambda$, $\gamma_1$ and $\gamma_2$, such that the {mean-field reflected} BSDE \eqref{my1} admits a unique  solution
\[(Y^1,Z^1,K^1)\in \mathcal{S}^{p}_{[T-{\delta},T]}\times \mathcal{H}^{p,d}_{[T-{\delta},T]}\times\mathcal{A}^p_{[T-{\delta},T]}\] on the  time interval $[T-\delta,T]$. Next, taking  $T-{\delta}$ as the terminal time and
applying Lemma \ref{my3.5} again, the {mean-field reflected} BSDE \eqref{my1} admits a unique  solution \[(Y^2,Z^2,K^2)\in \mathcal{S}^{p}_{[T-2\delta,T-\delta]}\times \mathcal{H}^{p,d}_{[T-2\delta,T-\delta]}\times\mathcal{A}^p_{[T-2\delta,T-\delta]}\] on the
time interval $[T-2\delta,T-\delta]$.
Denote by
\begin{align*}
&{Y}_t=\sum\limits_{i=1}^2Y^i_t\mathbf{1}_{[T-i\delta,T-(i-1)\delta)}+Y^1_T\mathbf{1}_{\{T\}}, \  {Z}_t=\sum\limits_{i=1}^2Z^i_t\mathbf{1}_{[T-i\delta,T-(i-1)\delta)}+
Z^1_T\mathbf{1}_{\{T\}},
\\
&{K}_t=K^2_t\mathbf{1}_{[T-i\delta,T-(i-1)\delta)}+\left(K^2_{T-\delta}+K^1_t\right) \mathbf{1}_{[T-\delta,T]}.
\end{align*}
It is easy to check that
$( {Y}, {Z}, {K})\in\mathcal{S}^{p}_{[T-2\delta,T]}\times \mathcal{H}^{p,d}_{[T-2\delta,T]}\times\mathcal{A}^p_{[T-2\delta,T]}$ is a solution to the mean-field reflected BSDE \eqref{my1}. Repeating this procedure, we get a global solution $(Y,Z,K)\in\mathcal{S}^{p} \times \mathcal{H}^{p,d} \times \mathcal{A}^p$.
The proof of the theorem is complete.
\end{proof}

\section{Bounded terminal condition and obstacle}
In this section, we will use a linearization technique and a fixed point argument to investigate the quadratic case for the mean-field reflected BSDE \eqref{my1} with bounded terminal condition and obstacle. In comparison to the Lipschitz case, the BMO martingale theory plays a key role here.
\medskip

\noindent In what follows,  we make use of the following  conditions on the terminal condition $\xi$, the driver $f$ and the constraint $h$.
\begin{description}
\item[(H1)]  The terminal condition $\xi \in \mathcal{L}^{\infty}$ with $\xi\geq h(T,\xi,  \mathbf{P}_{\xi})$.
\item[(H2)] There exist three positive constants $\alpha,\beta$ and $\gamma$ such that for any $t\in[0,T]$, $y\in \mathbb{R}$, $v\in \mathcal{P}_1(\mathbb{R})$ and $z\in \mathbb{R}^{d}$
\begin{equation*}
|f(t,y,v,z)|\leq \alpha+\beta (|y|+W_1(v,\delta_0))+\frac{\gamma}{2}|z|^2.
\end{equation*}
\item[(H3)] The process $h(\cdot,y,v)\in \mathcal{S}^{\infty}$ is uniformly bounded with respect to $(t,\omega,y,v)$.
\item[(H4)] There exist two constants $\gamma_1,\gamma_2>0$  such that for any  $t\in[0,T]$, { $y_1,y_2\in \mathbb{R}$, $v_1,v_2\in \mathcal{P}_1(\mathbb{R})$}
\begin{equation*}
|h(t,y_1,v_1)-h(t,y_2,v_2)|\leq \gamma_1 |y_1-y_2|+\gamma_2{W_1(v_1,v_2)}.
\end{equation*}
\item[(H5)] There exists a constant $\kappa$ such that for each $t\in[0,T]$, $y_1,y_2\in \mathbb{R}$, $v_1,v_2\in \mathcal{P}_1(\mathbb{R})$ and $z_1,z_2\in \mathbb{R}^{d}$
\begin{equation*}
|f(t,y_1,v_1,z_1)-f(t,y_2,v_2,z_2)|\leq \beta \left( |y_1-y_2|+{W_1(v_1,v_2)} \right)+\kappa(1+|z_1|+|z_2))|z_1-z_2|.
\end{equation*}
\end{description}
\noindent We are now ready to state the main result of this section.
\begin{theorem}\label{my1162}
Assume that \emph{(H1)}-\emph{(H5)} are satisfied.  If $\gamma_1$ and $\gamma_2$ satisfy   \begin{align}\label{myq9502}
\gamma_1+\gamma_2<1,
\end{align}
then the quadratic {mean-field reflected} BSDE \eqref{my1}  admits a unique
 solution $(Y,Z,K)\in \mathcal{S}^{\infty}\times BMO\times\mathcal{A}$.
\end{theorem}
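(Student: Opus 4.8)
The plan is to follow the architecture of the Lipschitz case (Lemma \ref{my3.5} and Theorem \ref{my1161}): by Lemma \ref{my6} and Remark \ref{my778}, any solution $Y$ is a fixed point of the map $\Gamma$, so I would solve the fixed point problem on a short interval $[T-h,T]$ by a contraction argument and then stitch the local solutions into a global one, the interval length being controlled by structural constants only. The one genuinely new ingredient, relative to the Lipschitz case, is that the linearization coefficient in the $z$-variable is no longer bounded but only a $BMO$ generator, so Girsanov's change of measure must be justified through $BMO$ martingale theory rather than the elementary exponential-moment bound used after \eqref{myq528}.

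First I would set up the map. For frozen $U\in\mathcal{S}^\infty_{[T-h,T]}$ the driver $f^U(t,z)=f(t,U_t,\mathbf{P}_{U_t},z)$ depends only on $(t,z)$ and, by (H2), satisfies $|f^U(t,z)|\le \alpha+2\beta\|U\|_{\mathcal{S}^\infty}+\tfrac{\gamma}{2}|z|^2$; combined with the bounded data (H1), (H3), quadratic reflected BSDE theory (\cite{KL,BY}) provides a unique solution of \eqref{my3} with data $(\xi,f^U,h(\cdot,U_\cdot,\mathbf{P}_{U_\cdot}))$ in $\mathcal{S}^\infty\times BMO\times\mathcal{A}$, whose first component is $\Gamma(U)$ by Lemma \ref{my6}. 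This shows $\Gamma$ maps $\mathcal{S}^\infty_{[T-h,T]}$ into itself and, crucially, that for each fixed $U$ the processes $z^{\tau}$ appearing in the Snell representation \eqref{my1001}--\eqref{myq528} lie in $BMO$.

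The contraction is the heart of the matter. For $U^1,U^2$ I would write $\Gamma(U^i)_t=\esssup_{\tau}y^{i,\tau}_t$, linearize exactly as in \eqref{myq528}, and introduce $\beta_s=\frac{f^{U^1}(s,z^{1,\tau}_s)-f^{U^1}(s,z^{2,\tau}_s)}{|z^{1,\tau}_s-z^{2,\tau}_s|^2}(z^{1,\tau}_s-z^{2,\tau}_s)\mathbf{1}_{\{\cdots\}}$. By (H5), $|\beta_s|\le\kappa(1+|z^{1,\tau}_s|+|z^{2,\tau}_s|)$, so $\beta\mathbf{1}_{[0,\tau]}\in BMO$; this is precisely where $BMO$ theory is needed, guaranteeing that $\mathscr{Exp}(\beta\mathbf{1}_{[0,\tau]}\cdot B)$ is a true martingale and that $d\widetilde{\mathbf{P}}=\mathscr{Exp}(\beta\mathbf{1}_{[0,\tau]}\cdot B)_0^T d\mathbf{P}$ defines an equivalent probability measure under which $y^{1,\tau}-y^{2,\tau}=\mathbf{E}^{\widetilde{\mathbf{P}}}_t[\Delta h_\tau\mathbf{1}_{\{\tau<T\}}+\int_t^\tau(f^{U^1}-f^{U^2})(s,z^{2,\tau}_s)ds]$. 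The decisive simplification over the Lipschitz computation is to estimate in the $\mathcal{S}^\infty$ norm: using (H4) with $W_1(\mathbf{P}_{U^1_s},\mathbf{P}_{U^2_s})\le\mathbf{E}[|U^1_s-U^2_s|]$ one gets $|\Delta h_\tau|\le(\gamma_1+\gamma_2)\|U^1-U^2\|_{\mathcal{S}^\infty}$, and (H5) bounds the running difference by $2\beta h\,\|U^1-U^2\|_{\mathcal{S}^\infty}$; the integrand is then a deterministic constant, so $\widetilde{\mathbf{P}}$ contributes no extra factor and I obtain $\|\Gamma(U^1)-\Gamma(U^2)\|_{\mathcal{S}^\infty}\le(\gamma_1+\gamma_2+2\beta h)\|U^1-U^2\|_{\mathcal{S}^\infty}$. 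Thus no reverse-Hölder/$L^q$ estimate of the density is required, and under $\gamma_1+\gamma_2<1$ the map is a contraction once $h<(1-\gamma_1-\gamma_2)/(2\beta)$.

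Finally I would conclude. Banach's theorem on the complete space $\mathcal{S}^\infty_{[T-h,T]}$ yields a unique fixed point $Y$; feeding $Y$ back into \eqref{my3} with data $(\xi,f^Y,h(\cdot,Y_\cdot,\mathbf{P}_{Y_\cdot}))$ (whose compatibility $h(T,\xi,\mathbf{P}_\xi)\le\xi$ holds by (H1)) produces $(Y,Z,K)\in\mathcal{S}^\infty\times BMO\times\mathcal{A}$ solving \eqref{my1} on $[T-h,T]$. Local uniqueness follows as in Lemma \ref{my3.5}: any solution's $Y$-component is a $\Gamma$-fixed point, hence equals $Y$, and It\^o's formula applied to $|Y-Y'|^2$ forces $Z=Z'$ and then $K=K'$. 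Since the admissible length $\delta=(1-\gamma_1-\gamma_2)/(2\beta)$ depends only on $\gamma_1,\gamma_2,\beta$, and the new terminal value at each step, $Y_{T-k\delta}$, is again bounded and satisfies the reflection constraint, the backward stitching of Theorem \ref{my1161} applies verbatim and delivers the global solution. The main obstacle is the control of the unbounded linearization coefficient $\beta$: establishing $\beta\in BMO$ (which rests on the a priori $BMO$ bound for $z^{i,\tau}$ coming from (H2)--(H3) and quadratic BSDE theory) and legitimizing the associated Girsanov transform is what makes the $\mathcal{S}^\infty$-contraction estimate rigorous.
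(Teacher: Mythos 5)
Your proposal is correct and follows essentially the same route as the paper: freezing $U$ to get a quadratic reflected BSDE solvable in $\mathcal{S}^{\infty}\times BMO\times\mathcal{A}$ (the paper's Lemma \ref{myq792}, via \cite{KL} plus the appendix BMO upgrade), then linearizing with $\beta_s$ bounded by $\kappa(1+|z^{1,\tau}_s|+|z^{2,\tau}_s|)\in BMO$ so that Girsanov applies, deriving the $\mathcal{S}^{\infty}$-contraction with constant $\gamma_1+\gamma_2+2\beta h$, and stitching local solutions exactly as in Theorem \ref{my1161}. The only cosmetic difference is that you invoke \cite{KL,BY} directly for the $BMO$ regularity of $Z$, where the paper isolates this upgrade in Lemma \ref{my7963}.
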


\medskip  \noindent In  order to prove Theorem \ref{my1162}, we need to analyze the quadratic solution map $\Gamma$.

\begin{lemma}\label{myq7925}
Assume that \emph{(H2)} and \emph{(H5)} are satisfied and that $\eta\in\mathcal{L}^{\infty}, U\in\mathcal{S}^{\infty}$. Then, the following quadratic   BSDE:
{\small \begin{align*}
y^{\tau}_t=\eta+\int^{\tau}_t f(s,U_s,\mathbf{P}_{U_s},z^{\tau}_s)ds-\int^{\tau}_t z^{\tau}_s dB_s,
\end{align*}
}
admits a unique solution $(y^{\tau},z^{\tau})\in\mathcal{S}^{\infty}\times BMO$.
\end{lemma}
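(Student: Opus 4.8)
The plan is to treat this as a one-dimensional quadratic BSDE with bounded terminal data, exploiting that once $U$ is fixed the driver depends on $(s,z)$ alone through $g(s,z):=f(s,U_s,\mathbf{P}_{U_s},z)$. Since $U\in\mathcal{S}^{\infty}$ we have $|U_s|\le\|U\|_{\mathcal{S}^{\infty}}$ and $W_1(\mathbf{P}_{U_s},\delta_0)=\mathbf{E}[|U_s|]\le\|U\|_{\mathcal{S}^{\infty}}$, so \emph{(H2)} yields the \emph{uniform} quadratic-growth bound $|g(s,z)|\le\bar\alpha+\frac{\gamma}{2}|z|^2$ with the deterministic constant $\bar\alpha:=\alpha+2\beta\|U\|_{\mathcal{S}^{\infty}}$, while \emph{(H5)} gives the local Lipschitz estimate $|g(s,z_1)-g(s,z_2)|\le\kappa(1+|z_1|+|z_2|)|z_1-z_2|$. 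To reduce the random horizon $[t,\tau]$ to the fixed horizon $[0,T]$, I would freeze the equation after $\tau$: consider on $[0,T]$ the driver $\bar g(s,z):=g(s,z)\mathbf{1}_{[0,\tau]}(s)$ with terminal value $\eta\in\mathcal{L}^{\infty}$, so that a solution on $[0,T]$ restricts to a solution on $[0,\tau]$ with $z$ vanishing on $(\tau,T]$, exactly as recorded after \eqref{myq2}.

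Existence then follows from the classical theory of quadratic BSDEs with bounded terminal condition: $\bar g$ has quadratic growth and is continuous in $z$, so Kobylanski's theorem \cite{K1} provides a solution $(y^{\tau},z^{\tau})\in\mathcal{S}^{\infty}\times\mathcal{H}^{2,d}$. The crucial upgrade to $z^{\tau}\in BMO$ I would obtain by the standard exponential transform: applying It\^o's formula to $e^{2\gamma y^{\tau}_s}$ and using $g(s,z)\le\bar\alpha+\frac{\gamma}{2}|z|^2$ produces, after the term $-\tfrac12\phi''+\tfrac{\gamma}{2}\phi'$ is computed for $\phi(x)=e^{2\gamma x}$, a negative coefficient $-\gamma^2 e^{2\gamma y^{\tau}}$ in front of $|z^{\tau}|^2$; rearranging, discarding the martingale term, and taking $\mathbf{E}_\sigma[\cdot]$ over an arbitrary $\sigma\in\mathcal{T}_0$, together with the a priori bound $\|y^{\tau}\|_{\mathcal{S}^{\infty}}\le M$, yields a bound on $\mathbf{E}_\sigma[\int_\sigma^\tau|z^{\tau}_s|^2ds]$ that is uniform in $\sigma$ and $\omega$. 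This is precisely $\|z^{\tau}\|_{BMO}<\infty$.

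For uniqueness I would linearize in the spirit of the computation following \eqref{myq528}. Given two solutions $(y^{1},z^{1}),(y^{2},z^{2})\in\mathcal{S}^{\infty}\times BMO$, their difference $(\delta y,\delta z)$ solves a BSDE on $[0,\tau]$ with zero terminal value and driver $g(s,z^{1}_s)-g(s,z^{2}_s)=\langle\beta_s,\delta z_s\rangle$, where $\beta_s:=\frac{g(s,z^{1}_s)-g(s,z^{2}_s)}{|z^{1}_s-z^{2}_s|^2}(z^{1}_s-z^{2}_s)\mathbf{1}_{\{z^{1}_s\ne z^{2}_s\}}$. By \emph{(H5)} one has $|\beta_s|\le\kappa(1+|z^{1}_s|+|z^{2}_s|)$, and since $z^{1},z^{2}\in BMO$ this forces $\beta\mathbf{1}_{[0,\tau]}\in BMO$; hence $\mathscr{Exp}(\beta\mathbf{1}_{[0,\tau]}\cdot B)$ is a true martingale by \cite{K}, and Girsanov's theorem turns the linear BSDE into $\delta y_t=-\int_t^\tau\delta z_s\,d\widehat B_s$ under the equivalent measure $d\widehat{\mathbf{P}}=\mathscr{Exp}(\beta\mathbf{1}_{[0,\tau]}\cdot B)_0^T\,d\mathbf{P}$, with $\widehat B_t=B_t-\int_0^t\beta_s\mathbf{1}_{[0,\tau]}(s)\,ds$. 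Taking $\mathbf{E}^{\widehat{\mathbf{P}}}_t[\cdot]$ gives $\delta y\equiv0$, whence $\delta z\equiv0$.

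The step I expect to be the main obstacle is the BMO estimate, since it is what makes the local Lipschitz condition \emph{(H5)} usable: without $z^{\tau}\in BMO$ the linearized coefficient $\beta$ need not be BMO and the Girsanov change of measure in the uniqueness argument would not be justified. The growth-bound and exponential-transform computation must therefore be carried out with uniform deterministic constants, which is exactly why the boundedness of $U$ through $\|U\|_{\mathcal{S}^{\infty}}$ is used to absorb the $y$- and measure-dependence of $f$ into $\bar\alpha$.
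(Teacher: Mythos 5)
Your proposal is correct, but it is genuinely more self-contained than the paper's treatment: the paper disposes of this lemma in one line, declaring it ``an immediate consequence of \cite[Theorem 7.3.3]{zhang2017}'', i.e.\ it outsources the whole well-posedness theory of quadratic BSDEs with bounded data to that citation. What you write out --- the uniform growth bound $|g(s,z)|\le\bar\alpha+\frac{\gamma}{2}|z|^2$ with $\bar\alpha=\alpha+2\beta\|U\|_{\mathcal{S}^{\infty}}$ (using $W_1(\mathbf{P}_{U_s},\delta_0)=\mathbf{E}[|U_s|]$), existence from Kobylanski \cite{K1}, the exponential transform upgrading $z^{\tau}$ to $BMO$, and uniqueness by linearization under (H5) followed by a Kazamaki--Girsanov change of measure \cite{K} --- is precisely the standard argument underlying the cited theorem, so the mathematical content agrees; your version has the added value of making explicit where (H2), (H5) and the boundedness of $U$ enter, and of treating the random horizon $[0,\tau]$ explicitly via the frozen driver $g(s,z)\mathbf{1}_{[0,\tau]}(s)$, a reduction which the fixed-horizon theorem invoked by the paper does not literally cover and which the paper leaves implicit. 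Two points you should state explicitly if you keep your proof: first, $\eta$ must be $\mathscr{F}_{\tau}$-measurable (as in the paper's setup following \eqref{myq2}, and as holds in the application \eqref{myq528}); this is exactly what makes $z^{\tau}$ vanish on $(\tau,T]$ in the frozen equation, so that its restriction solves the random-horizon BSDE with terminal value $\eta$ rather than $\mathbf{E}_{\tau}[\eta]$. Second, in the uniqueness step, passing to $\mathbf{E}^{\widehat{\mathbf{P}}}_t$ in $\delta y_t=-\int_t^{\tau}\delta z_s\,d\widehat{B}_s$ requires the stochastic integral to be a true $\widehat{\mathbf{P}}$-martingale; this follows from the invariance of $BMO$ under a Girsanov transformation driven by a $BMO$ martingale (Kazamaki), the same point the paper itself uses silently in the proof of Theorem \ref{my1162}.
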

\begin{proof}
The result is an immediate consequence of \cite[Theorem 7.3.3]{zhang2017}.
\end{proof}

\begin{lemma}\label{myq792}
	Assume that \emph{(H1)}-\emph{(H5)} are satisfied and  $U\in\mathcal{S}^{\infty}$ {with $U_T=\xi$}. Then, the following quadratic reflected BSDE:
\begin{align}\label{myq527}
\begin{cases}
&Y_t=\xi+\int_t^T f(s,{U_s},\mathbf{P}_{U_s},Z_s)ds-\int_t^T Z_sdB_s+K_T-K_t, \quad 0\le t\le T,\\
& Y_t\geq h(t,U_t, \mathbf{P}_{U_t}),\quad \forall t\in [0,T] \,\,\mbox{ and }\,\, \int_0^T (Y_t-h(t,U_t,\mathbf{P}_{U_t}))dK_t = 0,
\end{cases}
\end{align}
	admits a unique solution $(Y,Z,K)\in \mathcal{S}^{\infty}\times BMO \times\mathcal{A}$.
\end{lemma}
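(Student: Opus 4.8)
The point is that, once $U\in\mathcal{S}^{\infty}$ is frozen, \eqref{myq527} ceases to be of mean-field type. Writing $g(s,z):=f^{U}(s,z)=f(s,U_s,\mathbf{P}_{U_s},z)$ and $L_t:=h(t,U_t,\mathbf{P}_{U_t})$, the system \eqref{myq527} is exactly the classical quadratic reflected BSDE \eqref{my3} with data $(\eta,g,L)=(\xi,f^{U},L)$, and here the driver $g$ does not even depend on the first unknown. The plan is therefore to verify that $(\xi,g,L)$ is admissible data for the existence and uniqueness theory of bounded quadratic reflected BSDEs of Kobylanski et al. \cite{KL} and Bayraktar--Yao \cite{BY}, to quote those works for existence in $\mathcal{S}^{\infty}\times\mathcal{H}^{2,d}\times\mathcal{A}$, and then to upgrade the integrability of $Z$ to $BMO$.

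Checking the data is straightforward. By (H1), $\xi\in\mathcal{L}^{\infty}$. Since $U\in\mathcal{S}^{\infty}$, both $|U_s|$ and $W_1(\mathbf{P}_{U_s},\delta_0)=\mathbf{E}[|U_s|]$ are bounded by $\|U\|_{\mathcal{S}^{\infty}}$, so (H2) gives $|g(s,z)|\le(\alpha+2\beta\|U\|_{\mathcal{S}^{\infty}})+\tfrac{\gamma}{2}|z|^{2}$; that is, $g$ has quadratic growth in $z$ with a bounded additive part, and (H5) provides the local Lipschitz bound $|g(s,z_1)-g(s,z_2)|\le\kappa(1+|z_1|+|z_2|)|z_1-z_2|$. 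By (H3) the obstacle $L$ is uniformly bounded, so $L\in\mathcal{S}^{\infty}$, and since $U_T=\xi$ we have $L_T=h(T,\xi,\mathbf{P}_{\xi})\le\xi$ by (H1). This yields existence. For uniqueness I would invoke Lemma \ref{my6}: under (H5) the quadratic driver $g$ satisfies the comparison principle (cf. the remark following Lemma \ref{my6} and \cite{BY,KL}), so the nonlinear Snell envelope representation holds and \eqref{myq527} has at most one solution.

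It remains to show $Z\in BMO$, which is what the later Girsanov-type arguments will require. I would apply It\^o's formula to $\phi(Y)$, where $\phi(y)=a^{-2}(e^{ay}-ay-1)$ is convex and increasing (after translating $Y$ by its $\mathcal{S}^{\infty}$-bound so that it is non-negative) and $a$ is chosen large. Using the quadratic bound on $g$ and the boundedness of $Y$, the good term $\tfrac12\int_{\tau}^{T}(\phi''-\gamma\phi')(Y_s)|Z_s|^{2}\,ds$ dominates a multiple of $\int_{\tau}^{T}|Z_s|^{2}\,ds$, and taking $\mathbf{E}_{\tau}$ removes the stochastic integral, yielding a uniform bound on $\mathbf{E}_{\tau}[\int_{\tau}^{T}|Z_s|^{2}\,ds]$. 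The hard part will be the reflection term $\int_{\tau}^{T}\phi'(Y_s)\,dK_s$, which carries the unfavourable sign. To handle it I would use the Skorokhod condition $\int_0^T(Y_s-L_s)\,dK_s=0$, which confines $dK$ to $\{Y_s=L_s\}$ where $\phi'(Y_s)=\phi'(L_s)$ is bounded by $\|\phi'(L)\|_{\infty}$; combining the resulting estimate $\mathbf{E}_{\tau}[\int_{\tau}^{T}\phi'(Y_s)\,dK_s]\le\|\phi'(L)\|_{\infty}\,\mathbf{E}_{\tau}[K_T-K_{\tau}]$ with the bound on $\mathbf{E}_{\tau}[K_T-K_{\tau}]$ read off from the equation, and taking $a$ large enough that the good quadratic term absorbs the $\tfrac{\gamma}{2}\mathbf{E}_{\tau}\int|Z|^{2}$ contributions coming from both $g$ and the reflection, closes the estimate and gives $\|Z\|_{BMO}<\infty$.
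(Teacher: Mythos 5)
Your overall skeleton matches the paper's proof: freezing $U$ turns \eqref{myq527} into a classical quadratic reflected BSDE with data $(\xi,f^U,h(\cdot,U_\cdot,\mathbf{P}_{U_\cdot}))$, existence follows from \cite{KL}, and uniqueness follows from the comparison principle for $(t,z)\mapsto f(t,U_t,\mathbf{P}_{U_t},z)$ (the paper quotes \cite[Theorem 7.3.1]{zhang2017}) together with Lemma \ref{my6}. The gap is in your BMO step. For a \emph{lower} obstacle the dynamics read $dY_t=-g\,dt+Z_t\,dB_t-dK_t$, so with your increasing convex $\phi$ the reflection term $\int_\tau^T\phi'(Y_s)\,dK_s$ indeed sits on the unfavourable side, and your patch does not close the estimate. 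The Skorokhod condition gives $\int_\tau^T\phi'(Y_s)\,dK_s=\int_\tau^T\phi'(L_s)\,dK_s\le\phi'(c_L)\,(K_T-K_\tau)$ with $c_L$ the bound of the (translated) obstacle, and the only available bound on the reflection mass comes from the equation itself: $\mathbf{E}_\tau[K_T-K_\tau]\le 2\|Y\|_{\mathcal{S}^{\infty}}+CT+\tfrac{\gamma}{2}\mathbf{E}_\tau\big[\int_\tau^T|Z_s|^2\,ds\big]$. Combining, you arrive at
\begin{align*}
\tfrac12\,\mathbf{E}_\tau\Big[\int_\tau^T(\phi''-\gamma\phi')(Y_s)\,|Z_s|^2\,ds\Big]
\;\le\; C(a)\;+\;\tfrac{\gamma}{2}\,\phi'(c_L)\,\mathbf{E}_\tau\Big[\int_\tau^T|Z_s|^2\,ds\Big].
\end{align*}
The good coefficient on the left is evaluated along $Y_s$, and for your $\phi$ one has $\inf_{y\ge 0}(\phi''-\gamma\phi')(y)=(\phi''-\gamma\phi')(0)=1$ \emph{for every} $a\ge\gamma$, whereas the bad coefficient $\gamma\phi'(c_L)=\gamma a^{-1}(e^{ac_L}-1)$ is evaluated at the fixed point $c_L$, is always at least $\gamma c_L$, and is \emph{increasing} in $a$. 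So taking $a$ large makes the situation strictly worse, and the inequality can only close under a smallness condition (roughly $\gamma c_L<\ln 2$, taking $a=\gamma$) which is not assumed. The two quadratic terms live at different points of the state space, and no choice of $a$ reconciles them.

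The fix — and this is exactly what the paper's Lemma \ref{my7963} does — is to use a \emph{decreasing} test function, matched to the direction of the reflection. Applying It\^o's formula to $e^{-2\gamma Y_t}$, the reflection term appears as $-2\gamma\int_\tau^T e^{-2\gamma Y_s}\,dK_s\le 0$ and is simply discarded: no Skorokhod condition and no bound on $K$ are needed at all. Moreover, both quadratic terms now carry the \emph{same} weight $e^{-2\gamma Y_s}$: the It\^o term contributes $2\gamma^2 e^{-2\gamma Y_s}|Z_s|^2$ while the driver, via $|g|\le \alpha'+\tfrac{\gamma}{2}|z|^2$, eats at most $\gamma^2 e^{-2\gamma Y_s}|Z_s|^2$, leaving the pointwise margin $\gamma^2 e^{-2\gamma Y_s}\ge \gamma^2 e^{-2\gamma\|Y\|_{\mathcal{S}^{\infty}}}>0$. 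Taking $\mathbf{E}_\tau$ then yields the uniform bound on $\mathbf{E}_\tau\big[\int_\tau^T|Z_s|^2\,ds\big]$ directly, and $K\in\mathcal{A}$ follows from the equation. In short: increasing exponentials are the right tool for upper obstacles, decreasing ones for lower obstacles; your argument uses the wrong direction and the Skorokhod-condition patch cannot compensate.
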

\begin{proof}
It follows from  \cite[Theorem 7.3.1]{zhang2017} that the comparison principle holds for the driver $(t,z)\mapsto f(t,U_t,P_{U_t},z)$ under Assumptions {(H2)} and {(H5)}. This, together with Lemma \ref{my6}, implies that $Y=\Gamma(U)$ for any solution $Y$ to the  quadratic  reflected BSDE \eqref{myq527}. Thus, it suffices to prove the existence. From Assumptions (H1)-(H4), it is easy to check that the driver $(t,z)\mapsto f(t,U_t,P_{U_t},z)$ and the obstacle $t\mapsto h(t,U_t, \mathbf{P}_{U_t})$ satisfy \cite[Conditions (H1)-(H3)]{KL}.
Applying \cite[Theorem 1]{KL}, the reflected BSDE \eqref{myq527} has a solution $(Y,Z,K)\in \mathcal{S}^{\infty}\times \mathcal{H}^{d,2} \times\mathcal{A}^2$. Using Lemma \ref{my7963}  {in Appendix}, we derive that $(Z,K)\in BMO\times\mathcal{A}$.   This completes the proof.
\end{proof}

\medskip
\noindent We are now ready to complete the proof of the main result of this section.\medskip

\begin{proof}[Proof of Theorem \ref{my1162}]
Let $U^i\in \mathcal{S}^{\infty}$, $i=1,2$.
It follows from Lemma \ref{myq792} that
\begin{align}\label{my1002}
\Gamma(U^i)_t:=\esssup_{\tau\in\mathcal{T}_{t}} y^{i,\tau}_t, \ \ \forall t\in[0,T],
\end{align}
in which $y^{i,\tau}_t$ is the solution to the  BSDE \eqref{myq528}. Following the proof of Lemma \ref{my3.5} step by step (noting that $(\beta_t) \in BMO$ in this case), we have for every $t\in[0,T]$
{ \begin{align*}
&y^{1,\tau}_t-y^{2,\tau}_t\\
&\!=\mathbf{E}^{\widetilde{\mathbf{P}}}_t\bigg[h (\tau,U^1_{\tau},(\mathbf{P}_{U^1_s})_{s=\tau}) \mathbf{1}_{\{\tau<T\}}-h (\tau,U^2_{\tau},(\mathbf{P}_{U^2_s})_{s=\tau}) \mathbf{1}_{\{\tau<T\}} +\int^{\tau}_t \left(f^{U^1}(s,z^{2,\tau}_s)-f^{U^2}(s,z^{2,\tau}_s)\right)ds\bigg],
\end{align*}}
which together with assumptions (H4) and (H5) implies that for any $t\in[T-h,T]$,
 \begin{align*}
|y^{1,\tau}_t-y^{2,\tau}_t|\leq (\gamma_1+\beta h)\|U^1-U^2\|_{\mathcal{S}^{\infty}_{[T-h,T]}}+(\gamma_2+\beta h)\sup\limits_{s\in[T-h,T]}\mathbf{E}[|U_s^1-U^2_s|].
\end{align*}
The above inequality combined with \eqref{my1002} implies the following,
 \begin{align*}\begin{split}
\|\Gamma(U^1)-\Gamma(U^2)\|_{\mathcal{S}^{\infty}_{[T-h,T]}}
\leq (\gamma_1+\gamma_2+2\beta h)\|U^1-U^2\|_{\mathcal{S}^{\infty}_{[T-h,T]}}.
\end{split}
\end{align*}
Under assumption \eqref{myq9502}, we can then find a small enough constant $h$ depending only on $\beta,\gamma_1$ and $\gamma_2$ such that $\gamma_1+\gamma_2+2\beta h< 1.$ It is now obvious that $\Gamma$ defines a contraction map on the time interval $[T-h,T]$.
Finally, proceeding exactly as in Theorem \ref{my1161} and Lemma \ref{my3.5}, we complete the proof.
\end{proof}

\begin{remark}{\upshape
When the terminal condition is unbounded, the process $(\beta_t)$ may be unbounded in the BMO space, so that $\widetilde{\mathbf{P}}$ is not well-defined. Thus, the conventional fixed point argument fails to work in the unbounded terminal condition case.
}
\end{remark}

\section{Unbounded terminal condition and obstacle}
In this section, we will use the $\theta$-method to  deal with quadratic mean-field reflected BSDEs taking the form \eqref{my1} with unbounded terminal condition and obstacle. For this purpose, we need to assume the driver is concave or convex with respect to the second unknown $z$. In what follows, we make use of the following  conditions on the terminal condition $\xi$, the driver $f$ and the constraint $h$.
\begin{description}
\item[(H1')]  The terminal condition $\xi \in \mathbb{L}$ with $\xi\geq h(T,\xi,  \mathbf{P}_{\xi})$.
\item[(H3')] For any {$y\in\mathbb{R}$, $v\in \mathcal{P}_1(\mathbb{R})$}, the process $h(t,y,v)$ belongs to $\mathbb{S}$ .
\item[(H5')] For each $t\in[0,T]$, $y_1,y_2\in \mathbb{R}$, $v_1,v_2\in \mathcal{P}_1(\mathbb{R})$ and $z\in \mathbb{R}^{d}$
\begin{equation*}
|f(t,y_1,v_1,z)-f(t,y_2,v_2,z)|\leq \beta \left( |y_1-y_2|+{W_1(v_1,v_2)} \right).
\end{equation*}
\item[(H6)] For each $(t,\omega,y,v)\in[0,T]\times\Omega\times\mathbb{R}\times\mathcal{P}_1(\mathbb{R})$, $f(t,y,v,\cdot)$ is  concave or convex.
\end{description}
We are now ready to state the main result of this section.
\begin{theorem}\label{my116}
Assume that \emph{(H1')}, \emph{(H2)}, \emph{(H3')}, \emph{(H4)}, \emph{(H5')} and \emph{(H6)} hold.  If $\gamma_1$ and $\gamma_2$ satisfy   \begin{align}\label{myq950}
4(\gamma_1+\gamma_2)<1,
\end{align}
then the quadratic {mean-field reflected} BSDE \eqref{my1}  admits a unique
 solution $(Y,Z,K)\in \mathbb{S}\times \mathcal{H}^{d}\times\mathcal{A}$.
\end{theorem}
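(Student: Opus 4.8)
The plan is to follow the fixed-point strategy of Theorems~\ref{my1161} and \ref{my1162}, but to replace the linearization/$BMO$ machinery (which breaks down once $\xi$ and $h$ are unbounded, since the associated $Z$ need no longer lie in $BMO$, cf. the Remark closing Section~3) by the quadratic BSDE theory for data with exponential moments together with the $\theta$-method. First I would check that the solution map $\Gamma$ of Remark~\ref{my778} is well defined on $\mathbb{S}$. For fixed $U\in\mathbb{S}$ with $U_T=\xi$, the frozen obstacle $h(\cdot,U_\cdot,\mathbf{P}_{U_\cdot})$ belongs to $\mathbb{S}$ by (H3'), (H4) and $U\in\mathbb{S}$, while the frozen driver $f^U(t,z)=f(t,U_t,\mathbf{P}_{U_t},z)$ has quadratic growth in $z$ by (H2); hence the standard (non mean-field) quadratic reflected BSDE~\eqref{my3} with data $(\xi,f^U,h(\cdot,U_\cdot,\mathbf{P}_{U_\cdot}))$ admits a unique solution in $\mathbb{S}\times\mathcal{H}^{d}\times\mathcal{A}$ by the unbounded quadratic reflected BSDE theory of \cite{BY}, and its $Y$-component coincides with $\Gamma(U)$ via the nonlinear Snell envelope representation of Lemma~\ref{my6} (the comparison principle being available in the quadratic case, cf. \cite{BY,KL}). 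An exponential a priori estimate, controlling $\mathbf{E}[\exp(p\sup_t|\Gamma(U)_t|)]$ for every $p\geq1$ from (H1'), (H2), (H3') and the arbitrary-order exponential integrability of $\xi$ and $h$, then shows $\Gamma(\mathbb{S})\subset\mathbb{S}$, so the announced approximation $U^{n+1}=\Gamma(U^n)$ stays in $\mathbb{S}$.

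The heart of the proof is to estimate $\Gamma(U^1)-\Gamma(U^2)$ without linearizing in $z$, and here the $\theta$-method enters. Fixing the optimal stopping time $\tau^*$ for $\Gamma(U^1)$ in the representation~\eqref{my1001}--\eqref{myq528} gives, for any $\theta\in(0,1)$, the one-sided bound $\Gamma(U^1)_t-\theta\Gamma(U^2)_t\leq y^{1,\tau^*}_t-\theta y^{2,\tau^*}_t$, so it suffices to control $y^{1,\tau}-\theta y^{2,\tau}$ for a fixed stopping time. Splitting the driver of this difference as
\begin{equation*}
f^{U^1}(s,z^{1,\tau}_s)-\theta f^{U^2}(s,z^{2,\tau}_s)=\big[f^{U^1}(s,z^{1,\tau}_s)-\theta f^{U^1}(s,z^{2,\tau}_s)\big]+\theta\big[f^{U^1}(s,z^{2,\tau}_s)-f^{U^2}(s,z^{2,\tau}_s)\big],
\end{equation*}
the concavity/convexity hypothesis (H6) lets me dominate the first bracket by $(1-\theta)f^{U^1}(s,\frac{z^{1,\tau}_s-\theta z^{2,\tau}_s}{1-\theta})$ and hence, through (H2), by a term of order $(1-\theta)(\alpha+\beta(|U^1_s|+W_1))+\frac{\gamma}{2(1-\theta)}|z^{1,\tau}_s-\theta z^{2,\tau}_s|^2$, while (H5') bounds the second bracket by $\beta(|U^1_s-U^2_s|+W_1(\mathbf{P}_{U^1_s},\mathbf{P}_{U^2_s}))$ and (H4) bounds the terminal/obstacle difference by $\gamma_1|U^1_\tau-U^2_\tau|+\gamma_2 W_1(\mathbf{P}_{U^1_\tau},\mathbf{P}_{U^2_\tau})$. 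Taking $\esssup_\tau$ and letting $\theta\uparrow1$ should convert these into an inequality of the form $\|\Gamma(U^1)-\Gamma(U^2)\|\leq(4(\gamma_1+\gamma_2)+\varepsilon(h))\|U^1-U^2\|$ on a short interval $[T-h,T]$ in a suitable exponential-moment norm, with $\varepsilon(h)\to0$ as $h\to0$; under \eqref{myq950} this yields a contraction, the numerical factor $4$ being the loss incurred by the $\theta$-scaling needed to absorb the quadratic term.

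I expect this $\theta\uparrow1$ passage to be the main obstacle. The quadratic term $\frac{\gamma}{2(1-\theta)}|z^{1,\tau}_s-\theta z^{2,\tau}_s|^2$ produced by convexity blows up as $\theta\uparrow1$, so one cannot pass to the limit directly, nor can one change measure as in the bounded case, since $z^{1,\tau}-z^{2,\tau}$ need not be in $BMO$. The device is an exponential change of variables, working with $e^{a(y^{1,\tau}-\theta y^{2,\tau})}$ for a constant $a$ of order $\gamma/(1-\theta)$: It\^o's formula turns the offending quadratic term into a local martingale plus controllable remainders, and the arbitrary-order exponential integrability guaranteed by (H1') and (H3') keeps every moment finite uniformly in $\theta$, even though $a\to\infty$. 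Choosing the underlying Banach space so that it is complete, so that $U^1-U^2$ lies in it for $U^i\in\mathbb{S}$, and so that the resulting contraction constant is exactly the sharp $4(\gamma_1+\gamma_2)$ rather than a worse threshold, is precisely the delicate bookkeeping referred to in the introduction.

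Finally, once the contraction holds on $[T-h,T]$ for $h$ small (depending only on $\beta,\gamma,\gamma_1,\gamma_2$ and the data through their exponential moments), the Banach fixed point gives a fixed point $Y$ of $\Gamma$, realized as the limit of the successive approximations; pairing $Y$ with the $(Z,K)$ of the reflected BSDE~\eqref{my3} with frozen data $(\xi,f^Y,h(\cdot,Y_\cdot,\mathbf{P}_{Y_\cdot}))$ produces a local solution $(Y,Z,K)\in\mathbb{S}\times\mathcal{H}^{d}\times\mathcal{A}$, and uniqueness on the interval follows from the same $\theta$-estimate applied to two solutions. Stitching the local solutions backward in time exactly as in the proof of Theorem~\ref{my1161}, while checking that the exponential moments are propagated from one interval to the next so that the step $h$ stays bounded below, then yields the global solution and completes the proof.
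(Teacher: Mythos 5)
Your high-level ingredients (the Snell envelope representation, the estimates of Lemma \ref{my7}, the $\theta$-method, short-time localization and stitching) are the right ones, and your uniqueness step is essentially the paper's Lemma \ref{myq7900}. But the core of your existence argument --- a Banach fixed point for the fully frozen map $\Gamma$ in an exponential-moment norm --- has two genuine gaps. First, defining $\Gamma(U)$ through the completely frozen driver $(t,z)\mapsto f(t,U_t,\mathbf{P}_{U_t},z)$ and invoking \cite{BY} is exactly what the paper warns against: for unbounded $U\in\mathbb{S}$ this driver need not satisfy \cite[Condition (H1)]{BY} (see the remark following Lemma \ref{myq79}). The paper circumvents this by freezing only the law, i.e.\ working with $(t,y,z)\mapsto f(t,y,\mathbf{P}_{U_t},z)$, and this changes the iteration itself: in \eqref{myq78} the $y$-argument of the driver is the current unknown $Y^{(m)}$, not the previous iterate.

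Second, and more fundamentally, the contraction inequality $\|\Gamma(U^1)-\Gamma(U^2)\|\leq(4(\gamma_1+\gamma_2)+\varepsilon(h))\|U^1-U^2\|$ you hope to extract is not what the $\theta$-method produces. Applying Lemma \ref{my7} to the BSDE solved by $\delta_\theta y^{\tau}=\frac{\theta y^{1,\tau}-y^{2,\tau}}{1-\theta}$ yields bounds on $\mathbf{E}_t\left[\exp\left(p\gamma(\delta_\theta y^{\tau})^{+}\right)\right]$ whose right-hand sides contain, inside the exponential, additive terms (the $\chi$, $\widetilde{\chi}$ of the paper) proportional to the full sizes of $\sup_t|Y^1_t|$, $\sup_t|Y^2_t|$ and $|\xi|$, not to the difference of the inputs. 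Such estimates are intrinsically non-homogeneous: without BMO (unavailable here, as you note) there is no Lipschitz-type stability of quadratic BSDEs in terms of differences of data alone, so no choice of norm turns them into a contraction, and the threshold $4(\gamma_1+\gamma_2)<1$ cannot arise that way. What closes the paper's argument is a self-referential structure your setting lacks: when the two objects compared are fixed points (uniqueness, Lemma \ref{myq7900}) or successive iterates (Lemma \ref{myq7902}), the quantity $(\gamma_1+\beta h)\sup_s\delta_\theta\overline{Y}_s$ reappears on the right of its own estimate, and $4(\gamma_1+\gamma_2+2\beta h)<1$ allows it to be absorbed; for the iterates the ``contraction'' then manifests as an exponent $\left(4e^{\beta h}(\gamma_1+\gamma_2+\beta h)\right)^{m-1}\to 0$ acting on a fixed finite quantity, not as a norm inequality. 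Accordingly, the paper abandons Banach's theorem for existence and instead proves that the successive approximations \eqref{myq78} are Cauchy in every $\mathcal{S}^p$ (Lemmas \ref{myq7901} and \ref{myq7902}, letting $\theta\to1$ only after the uniform-in-$\theta$ exponential bounds), then passes to the limit in the equation and in the Skorokhod condition. To repair your proof you would need either to substantiate a genuinely new Orlicz-type contraction estimate for quadratic BSDEs, or to switch to this successive-approximation scheme.
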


\noindent In order to prove Theorem \ref{my116}, we need to recall some technical results on the representation of solutions of quadratic BSDEs. First, we introduce some general conditions on the generator.
\begin{description}
\item[(H7)] There exists a positive progressively measurable   process $(\alpha_{t})_{0\leq t\leq T}$ with   $\int^T_0\alpha_tdt\in\mathbb{L}$ such that for each  $(t,\omega, z)\in[0,T]\times\Omega\times\mathbb{R}\times\mathbb{R}^d$, $|g(t,y,z)|\leq \alpha_t+\beta|y|+\frac{\gamma}{2}|z|^2.$
\item[(H8)] For each  $(t,\omega, z)\in[0,T]\times\Omega\times\mathbb{R}^d$, $g(t,y,z)\leq \alpha_t+\beta|y|+\frac{\gamma}{2}|z|^2.$
\end{description}
\begin{remark}\label{my900}{\upshape
Suppose that $g$ satisfies assumptions  (H5'), (H6) and (H7). It follows from  \cite[Corollary 6]{BH2008} that the quadratic BSDE \eqref{myq2} with $\eta\in\mathbb{L}$ admits a unique solution $(y^{\tau},z^{\tau})\in \mathbb{S}\times\mathcal{H}^{d}.$  In particular, the comparison principle also holds by \cite[Proposition 5.1]{BY} or  \cite[Theorem 5]{BH2008} (up to a slight modification). Under assumptions  (H2), (H5') and (H6), the quadratic reflected BSDE \eqref{my3} with $L\in\mathbb{S}$ admits a unique solution $(Y,Z,K)\in \mathbb{S}\times\mathcal{H}^{d}\times\mathcal{A}$ by \cite[Theorems 3.2 and 4.1]{BY}. }
\end{remark}

\noindent The following result plays a key role in our subsequent calculus, and can be derived from \cite[Proposition 1]{FHT1}.
\begin{lemma}\label{my7}
Assume that $(y^{\tau},z^{\tau})\in \mathcal{S}^2\times\mathcal{H}^{2,d}$  is a solution to \eqref{myq2}. Suppose that there is a  constant $p\geq 1$ such that
\begin{align*}
\mathbf{E}\bigg[\exp\bigg\{2p\gamma e^{\beta T}\sup\limits_{t\in[0,T]}|y^{\tau}_t|+2p\gamma\int^T_0\alpha_t e^{\beta t}dt\bigg\}\bigg]<\infty.
\end{align*}
Then, we have
	\begin{description}
		\item[(i)] Let Assumption (H7) hold. Then, for each $t\in[0,T]$, Then for each $t\in[0,T]$ and $p\geq 1$,
\begin{align*}\exp\left\{p\gamma|y^{\tau}_t|\right\}\leq   \mathbf{E}_t\bigg[\exp\bigg\{p\gamma e^{\beta (T-t)}|\eta|+p\gamma\int^T_t\alpha_s e^{\beta (s-t)}ds\bigg\}	 \bigg].
		\end{align*}
				\item[(ii)] Let Assumption (H8) hold. Then, for each $t\in[0,T]$,
 \begin{align*}
		\exp\left\{{p\gamma}(y^{\tau}_t)^+\right\}\leq   \mathbf{E}_t\bigg[\exp\bigg\{p\gamma e^{\beta (T-t)}\eta^++p\gamma\int^T_t\alpha_s e^{\beta (s-t)}ds\bigg\}
		\bigg].
		\end{align*}	
	\end{description}
\end{lemma}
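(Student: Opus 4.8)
The plan is to derive both inequalities by exhibiting, for each fixed conditioning time $t$, a weighted exponential functional of the solution that is a genuine submartingale on $[t,T]$; reading the submartingale inequality between $s=t$ and $s=T$ then gives the estimate. For part (i) I would fix $t\in[0,T]$ and consider, for $s\in[t,T]$,
\begin{align*}
X_s:=\exp\left\{p\gamma e^{\beta(s-t)}|y^{\tau}_s|+p\gamma\int_t^s\alpha_r e^{\beta(r-t)}\,dr\right\}.
\end{align*}
Its endpoints are precisely the two sides of the target inequality, namely $X_t=\exp\{p\gamma|y^{\tau}_t|\}$ and $X_T=\exp\{p\gamma e^{\beta(T-t)}|\eta|+p\gamma\int_t^T\alpha_r e^{\beta(r-t)}\,dr\}$, so once $X$ is shown to be a submartingale we obtain $\exp\{p\gamma|y^{\tau}_t|\}=X_t\le\mathbf{E}_t[X_T]$, which is exactly (i). (By the convention $y^{\tau}_\cdot=y^{\tau}_{\cdot\wedge\tau}$ and $z^{\tau}_\cdot=z^{\tau}_\cdot\mathbf{1}_{[0,\tau]}$ the driver is inactive on $(\tau,T]$, where the weight is still increasing, so the argument runs unchanged on all of $[t,T]$.)

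The core is an It\^o computation. Writing $\psi_s=p\gamma e^{\beta(s-t)}$, note the two structural facts $\psi_s'=\beta\psi_s$ and $\psi_s\ge p\gamma\ge\gamma$ for $s\ge t$ (here $p\ge1$ and $\beta>0$). Applying It\^o's formula to $X_s$ and using the Tanaka--Meyer decomposition $d|y^{\tau}_s|=\mathrm{sgn}(y^{\tau}_s)\big(-g(s,y^{\tau}_s,z^{\tau}_s)\,ds+z^{\tau}_s\,dB_s\big)+dL_s$, with $L$ the nonnegative local time of $y^{\tau}$ at $0$, the finite-variation part of $dX_s/X_s$ equals
\begin{align*}
\Big[\psi_s'|y^{\tau}_s|-\psi_s\,\mathrm{sgn}(y^{\tau}_s)g(s,y^{\tau}_s,z^{\tau}_s)+p\gamma\alpha_s e^{\beta(s-t)}+\tfrac12\psi_s^2|z^{\tau}_s|^2\Big]ds+\psi_s\,dL_s.
\end{align*}
Bounding $-\mathrm{sgn}(y^{\tau}_s)g\ge-(\alpha_s+\beta|y^{\tau}_s|+\tfrac{\gamma}{2}|z^{\tau}_s|^2)$ by (H7) and regrouping, the $|y^{\tau}_s|$-terms cancel (because $\psi_s'=\beta\psi_s$), the $\alpha_s$-terms cancel (by the choice of the integral weight, since $\tfrac{d}{ds}p\gamma\int_t^s\alpha_r e^{\beta(r-t)}dr=\psi_s\alpha_s$), and the surviving $z$-term is $\tfrac12\psi_s(\psi_s-\gamma)|z^{\tau}_s|^2\ge0$; together with $\psi_s\,dL_s\ge0$ this shows $X$ has nonnegative drift, i.e.\ is a local submartingale.

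To upgrade this to a true submartingale I would invoke the standing integrability hypothesis: since $\psi_s|y^{\tau}_s|\le p\gamma e^{\beta T}\sup_{[0,T]}|y^{\tau}|$ and the integral weight is dominated by $p\gamma\int_0^T\alpha_r e^{\beta r}dr$, we have $0\le X_s\le W:=\exp\{p\gamma e^{\beta T}\sup_{[0,T]}|y^{\tau}|+p\gamma\int_0^T\alpha_r e^{\beta r}dr\}$, and $W^2$ is exactly the random variable assumed integrable; dominating the localized submartingales $X_{\cdot\wedge\sigma_n}$ by $W\in L^1$ and passing to the limit via the conditional dominated convergence theorem yields $X_t\le\mathbf{E}_t[X_s]$ for all $s\ge t$. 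Part (ii) is entirely analogous, replacing $|y^{\tau}_s|$ by $(y^{\tau}_s)^+$ and the Tanaka formula accordingly, and using only the one-sided bound (H8); the same cancellations occur on $\{y^{\tau}_s>0\}$, while on $\{y^{\tau}_s\le0\}$ the drift collapses to the nonnegative term $p\gamma\alpha_s e^{\beta(s-t)}$. The two points requiring care are the non-smoothness of $|\cdot|$ and $(\cdot)^+$ (handled cleanly by Tanaka--Meyer, or equivalently by a smooth convex approximation in which the second-derivative term mimics the local time and only helps the sign) and, more importantly, the localization step, where the precise exponential-integrability assumption is exactly what delivers the uniform integrability needed to pass from a local to a genuine submartingale; the algebraic cancellations themselves are dictated by the weights and are routine.
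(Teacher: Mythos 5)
Your proof is correct. It is worth noting that the paper itself does not prove this lemma at all: it simply derives it from the cited result \cite[Proposition 1]{FHT1}, so your argument is a self-contained reconstruction of what the paper treats as a black box. The route you take — the exponential change of variable $X_s=\exp\{p\gamma e^{\beta(s-t)}|y^{\tau}_s|+p\gamma\int_t^s\alpha_re^{\beta(r-t)}dr\}$, Tanaka--Meyer to handle $|\cdot|$ and $(\cdot)^+$, the cancellation of the $|y|$- and $\alpha$-terms forced by $\psi_s'=\beta\psi_s$ together with $\psi_s\geq p\gamma\geq\gamma$ to absorb the quadratic term, and the upgrade from local to true submartingale via domination by $W$ with $W^2$ equal to the assumed-integrable random variable — is exactly the standard Briand--Hu type a priori estimate underlying the cited proposition, so the two approaches coincide in substance. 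Your treatment of the fine points is sound: the convention $y^{\tau}_\cdot=y^{\tau}_{\cdot\wedge\tau}$, $z^{\tau}_\cdot=z^{\tau}_\cdot\mathbf{1}_{[0,\tau]}$ does make the drift harmless on $(\tau,T]$ where the weight keeps increasing; part (ii) indeed only requires the one-sided bound (H8) because the Tanaka indicator $\mathbf{1}_{\{y^{\tau}_s>0\}}$ kills the driver on $\{y^{\tau}_s\leq 0\}$, leaving the nonnegative term $\psi_s\alpha_s$; and the localization step is legitimate since the stopped processes are squeezed between $0$ and $W\in L^2\subset L^1$, so conditional dominated convergence (equivalently, the class-(D) property) yields the genuine submartingale inequality $X_t\leq\mathbf{E}_t[X_T]$, which is precisely the claimed estimate.
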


\medskip
\noindent We are now ready to prove Theorem \ref{my116}. Indeed, we will make use of the $\theta$-method to prove existence and uniqueness of the solution of the quadratic mean-field reflected BSDE \eqref{my1}. \medskip

\begin{lemma}\label{myq7900}
	Assume that all the conditions of Theorem \ref{my116} hold. Then, the  quadratic mean-field reflected BSDE \eqref{my1} has at most one solution $(Y,Z,K)\in\mathbb{S}\times \mathcal{H}^{d}\times\mathcal{A}$.
\end{lemma}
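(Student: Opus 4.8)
The plan is to prove uniqueness by comparing any two solutions $(Y,Z,K)$ and $(Y',Z',K')$ in $\mathbb{S}\times\mathcal{H}^{d}\times\mathcal{A}$ through the nonlinear Snell envelope representation of Lemma \ref{my6} together with the $\theta$-method, which here replaces the BMO change of measure used in Theorem \ref{my1162} (unavailable now, since the linearized coefficient $\beta$ need not lie in $BMO$ when $\xi$ is unbounded). Without loss of generality I assume $f(t,y,v,\cdot)$ is concave in $z$, the convex case following by symmetry upon exchanging the two solutions. By Lemma \ref{my6} both $Y$ and $Y'$ are fixed points of $\Gamma$, so for each $t$ they equal the essential supremum over $\tau\in\mathcal{T}_{t}$ of the $g$-evaluations $\mathcal{E}^{f^Y}_{t,\tau}[\Xi^Y_\tau]$ and $\mathcal{E}^{f^{Y'}}_{t,\tau}[\Xi^{Y'}_\tau]$, where $\Xi^U_\tau=\xi\mathbf{1}_{\{\tau=T\}}+h(\tau,U_\tau,\mathbf{P}_{U_\tau})\mathbf{1}_{\{\tau<T\}}$. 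Well-posedness of these evaluations in $\mathbb{S}\times\mathcal{H}^{d}$ is provided by Remark \ref{my900}.

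The core is a $\theta$-estimate at the level of the $g$-evaluations. Fixing $\theta\in(0,1)$ and a stopping time $\tau$, I would let $(y^1,z^1)$ and $(y^2,z^2)$ be the BSDE solutions defining $\mathcal{E}^{f^Y}_{\cdot,\tau}[\Xi^Y_\tau]$ and $\mathcal{E}^{f^{Y'}}_{\cdot,\tau}[\Xi^{Y'}_\tau]$, and study $\delta^\theta y:=y^1-\theta y^2$, $\delta^\theta z:=z^1-\theta z^2$. Writing $z^1=\theta z^2+(1-\theta)\tfrac{\delta^\theta z}{1-\theta}$ and invoking concavity (H6) yields the drift lower bound $f^Y(s,z^1_s)-\theta f^Y(s,z^2_s)\geq(1-\theta)f^Y\big(s,\tfrac{\delta^\theta z_s}{1-\theta}\big)$, while the growth condition (H2) dominates the right-hand side by $-(1-\theta)\alpha_s-\frac{\gamma}{2(1-\theta)}|\delta^\theta z_s|^2$; the residual term $\theta\big(f^Y(s,z^2_s)-f^{Y'}(s,z^2_s)\big)$ is handled by (H5'), producing $\beta(|Y_s-Y'_s|+\mathbf{E}|Y_s-Y'_s|)$. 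Thus $\delta^\theta y$ solves a BSDE whose driver is dominated by a quadratic-in-$z$ expression, and I would absorb that quadratic term \emph{without} any change of measure by invoking the exponential a priori estimates of Lemma \ref{my7} (legitimate because $Y,Y'\in\mathbb{S}$ carry exponential moments of every order), thereby bounding $\delta^\theta y_t$ in terms of the terminal datum $\Xi^Y_\tau-\theta\Xi^{Y'}_\tau$ uniformly in $\tau$.

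Passing from the $g$-evaluations back to $Y,Y'$ via the essential supremum over $\tau$, letting $\theta\uparrow 1$, and using that the obstacle discrepancy is controlled by (H4), $|h(\tau,Y_\tau,\mathbf{P}_{Y_\tau})-h(\tau,Y'_\tau,\mathbf{P}_{Y'_\tau})|\leq\gamma_1|Y_\tau-Y'_\tau|+\gamma_2\mathbf{E}|Y_\tau-Y'_\tau|$, I expect to reach a self-referential inequality of the form $\|Y-Y'\|\leq 4(\gamma_1+\gamma_2)\|Y-Y'\|$ in a suitable sup-type norm, the factor $4$ being the price of the $\theta$-method (splitting into positive and negative parts and symmetrizing in $Y\leftrightarrow Y'$). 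The smallness condition \eqref{myq950}, namely $4(\gamma_1+\gamma_2)<1$, then forces $Y=Y'$. With $Y=Y'$ established, the two equations \eqref{my1} have identical drivers and obstacles, so $Z=Z'$ follows from identifying the martingale parts (equivalently, from Itô's formula applied to $|Y-Y'|^2$), and then $K=K'$ from the equation itself.

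I anticipate the main obstacle to be the control of $\frac{\gamma}{2(1-\theta)}|\delta^\theta z_s|^2$ as $\theta\uparrow 1$: lacking a $BMO$ bound, there is no Girsanov reduction available, so the entire burden falls on combining the concavity-generated structure with the exponential integrability afforded by $\mathbb{S}$ and Lemma \ref{my7}, and on verifying that the resulting exponential-moment bounds stay finite and pass to the limit. A secondary difficulty is preventing the mean-field terms (the $W_1$/expectation contributions) from breaking the self-referential closing of the estimate, which is precisely where the quantitative threshold $4(\gamma_1+\gamma_2)<1$ is consumed.
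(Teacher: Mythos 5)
Your architecture coincides with the paper's own proof: Snell envelope representation (Lemma \ref{my6}), the $\theta$-method under concavity, the one-sided exponential estimate of Lemma \ref{my7} in place of a Girsanov/BMO argument, symmetrization in $Y\leftrightarrow Y'$, and finally $\theta\uparrow1$ followed by It\^o's formula for $Z$ and $K$. However, two steps fail as written. The first is a sign problem: having assumed $f$ concave, you pair it with the decomposition $\delta^\theta y=y^1-\theta y^2$, and concavity then gives a \emph{lower} bound on the driver of $\delta^\theta y$, namely $f^Y(s,z^1_s)-\theta f^Y(s,z^2_s)\geq(1-\theta)f^Y\big(s,\tfrac{\delta^\theta z_s}{1-\theta}\big)\geq-(1-\theta)\alpha-\cdots-\tfrac{\gamma}{2(1-\theta)}|\delta^\theta z_s|^2$. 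A driver bounded \emph{below} by a negative quadratic is not ``dominated by a quadratic-in-$z$ expression'' in the sense of (H8), and Lemma \ref{my7}(ii) only controls the positive part of the solution when the driver is bounded \emph{above}. So your conclusion ``thereby bounding $\delta^\theta y_t$ in terms of the terminal datum'' does not follow; what your inequality actually controls is $(\theta y^2-y^1)^+$, the positive part of the \emph{negative} of your quantity. This is precisely why the paper pairs concavity with $\theta\ell^1-\ell^2$ and reserves your decomposition $\ell^1-\theta\ell^2$ for the convex case (Remark \ref{myrk11}). Relatedly, the paper normalizes by $1/(1-\theta)$, i.e.\ uses $\delta_\theta\ell=\frac{\theta\ell^1-\ell^2}{1-\theta}$, so that the quadratic coefficient entering Lemma \ref{my7} is exactly $\gamma$, independent of $\theta$; this is the resolution of what you flag as your ``main obstacle,'' the apparent blow-up $\frac{\gamma}{2(1-\theta)}$.

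The second gap is the closing of the estimate. The inequality you anticipate, $\|Y-Y'\|\leq 4(\gamma_1+\gamma_2)\|Y-Y'\|$ on $[0,T]$, is not what the method yields: the Lipschitz constant $\beta$ of $f$ in $(y,\mathbf{P}_y)$ enters multiplied by the length of the time horizon, so the factor is at best $4(\gamma_1+\gamma_2+2\beta T)$, which need not be below $1$ under \eqref{myq950}. The paper handles this by (i) localizing to a small interval $[T-h,T]$ with $4(\gamma_1+\gamma_2+2\beta h)<1$ and $4\nu(\gamma_1+\beta h)<1$, and iterating backwards in time, and (ii) closing the estimate not as a linear contraction but as a self-referential bound on exponential moments: with $x=\mathbf{E}\big[\exp\big\{p\gamma\sup_{s\in[T-h,T]}\delta_\theta\overline{Y}_s\big\}\big]$ one obtains $x\leq C\,x^{a}$ with $a=4(\gamma_1+\gamma_2+2\beta h)<1$, where $\gamma_1,\gamma_2$ appear as H\"older/Jensen \emph{exponents} rather than linear factors; since $x<\infty$ for each fixed $\theta$ (as $Y,Y'\in\mathbb{S}$), this gives $x\leq C^{1/(1-a)}$ uniformly in $\theta$, and uniqueness then follows from $Y^1-Y^2=(1-\theta)(\delta_\theta Y+Y^1)$ by letting $\theta\to1$. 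Both the time localization and this exponential-moment mechanism are absent from your sketch, so the proposal stops short of a proof exactly where the quantitative hypothesis \eqref{myq950} must be consumed.
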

\begin{proof}
\noindent For $i=1,2$,
let $({Y}^i,{Z}^i,{K}^i)$ be a  $ \mathbb{S}\times\mathcal{H}^d\times\mathcal{A}$-solution to the quadratic mean-field reflected BSDE~\eqref{my1}. From Lemma \ref{my6} and Remark \ref{my900}, we have
\begin{align*}
Y^{i}_t:=\esssup_{\tau\in\mathcal{T}_{t}} y^{i,\tau}_t, \ \ \forall t\in[0,T],
\end{align*}
in which $y^{i,\tau}_t$ is the solution of the following quadratic BSDE
{\small \begin{align*}
y^{i,\tau}_t=\xi\mathbf{1}_{\{\tau=T\}}+h (\tau,Y^{i}_{\tau},{(\mathbf{P}_{Y^{i}_s})_{s=\tau}}) \mathbf{1}_{\{\tau<T\}}+\int^{\tau}_t f(s,Y^i_s,\mathbf{P}_{Y^{i}_s},z^{i,\tau}_s)ds-\int^{\tau}_t z^{i,\tau}_s dB_s.
\end{align*}
}

\noindent Assume without loss of generality that $f(t,y,v,\cdot)$ is concave (see Remark \ref{myrk11}), for each $\theta\in (0,1)$, denote by
\[ \delta_{\theta}\ell=\frac{\theta \ell^1-\ell^2}{1-\theta}, \ \delta_{\theta}\widetilde{\ell}=\frac{\theta \ell^2-\ell^1}{1-\theta} ~~\text{and} ~~ \delta_{\theta}\overline{Y}:= |\delta_{\theta}Y|+|\delta_{\theta}\widetilde{Y}|\]
for $\ell=Y,y^{\tau}$ and $z^{\tau}$. Then, the pair of processes $(\delta_{\theta}y^{\tau},\delta_{\theta}z^{\tau})$
satisfies the following BSDE:
 \begin{align}\label{myq123}
\begin{split}
\delta_{\theta}y^{\tau}_t=&\delta_{\theta}\eta+\int^\tau_t\left(\delta_{\theta}f(s,\delta_{\theta}z^{\tau}_s)+\delta_{\theta}f_0(s)\right)ds-\int^\tau_t\delta_{\theta}z^{\tau}_sdB_s,
\end{split}
\end{align}
where the terminal condition and generator are given by
\begin{align*}
&\delta_{\theta}\eta=-\xi\mathbf{1}_{\{\tau=T\}}+\frac{\theta h (\tau,Y^{1}_{\tau},{(\mathbf{P}_{Y^{1}_s})_{s=\tau}})- h (\tau,Y^{2}_{\tau},{(\mathbf{P}_{Y^{2}_s})_{s=\tau}})}{1-\theta} \mathbf{1}_{\{\tau<T\}},\\
& \delta_{\theta}f_0(t)=
\frac{1}{1-\theta}\left(f(t,Y^1_{t},\mathbf{P}_{Y^{1}_t}, z^{2,\tau}_t)-f(t,Y^2_{t},\mathbf{P}_{Y^{2}_t}, z^{2,\tau}_t)\right),\\
&
\delta_{\theta}f(t,z)=\frac{1}{1-\theta}\bigg(
\theta f(t,Y^1_t,\mathbf{P}_{Y^{1}_t}, z^{1,\tau}_t)- f(t,Y^1_t,\mathbf{P}_{Y^{1}_t}, -(1-\theta)z+\theta z^{1,\tau}_t)\bigg).
\end{align*}

\noindent Recalling assumptions (H2), (H4), (H5') and (H6), we have that
\begin{align*}
&\delta_{\theta}\eta\leq |\xi|+|h(\tau,0,\delta_0)|+\gamma_1( 2|Y^{1}_{\tau}|+|\delta_{\theta}Y_\tau|) +\gamma_2(2\mathbf{E}[|Y^{1}_{s}|]_{s=\tau}+\mathbf{E}[|\delta_{\theta}Y_s|]_{s=\tau})),\\
 &\delta_{\theta}f_0(t)\leq \beta (|Y^1_t|+|\delta_{\theta}Y_t|+\mathbf{E}[|Y^1_t|]+\mathbf{E}[|\delta_{\theta}Y_t|]),\\
&\delta_{\theta}f(t,z)
 \leq -f(t,Y^1_t,\mathbf{P}_{Y^{1}_t}, -z)
 \leq \alpha+\beta (|Y^1_t|+\mathbf{E}[|Y^{1}_{t}|])+\frac{\gamma}{2}|z|^2.
\end{align*}
Set $ C_1:=\sup\limits_{i\in\{1,2\}}\mathbf{E}\big[\sup\limits_{s\in[0,T]}|Y^{i}_{s}|\big]$ and
{ \begin{align*}
&\chi= \alpha T+\sup\limits_{s\in[0,T]}|h(s,0,\delta_0)|+2(\gamma_1+\beta T)\left(\sup\limits_{s\in[0,T]}|Y^{1}_{s}|+\sup\limits_{s\in[0,T]}|Y^{2}_{s}|\right)+2(\gamma_2+\beta T)C_1,\\
&\widetilde{\chi}= \alpha T+\sup\limits_{s\in[0,T]}|h(s,0,\delta_0)|+2(1+\gamma_1+\beta T)\left(\sup\limits_{s\in[0,T]}|Y^{1}_{s}|+\sup\limits_{s\in[0,T]}|Y^{2}_{s}|\right)+2(\gamma_2+\beta T)C_1.\end{align*}
}

\noindent Using assertion (ii) of Lemma \ref{my7} to  \eqref{myq123}, we derive that  for any $p\geq 1$
{ \begin{align*}	
 \begin{split}
 &\exp\left\{{p\gamma}\big(\delta_{\theta}y^{\tau}_t\big)^+\right\}\\
 &\leq   \mathbf{E}_t\bigg[\exp\bigg\{p\gamma \bigg(|\xi|+\chi+(\gamma_1+\beta (T-t))\sup\limits_{s\in[t,T]}|\delta_{\theta}Y_{s}|+(\gamma_2+\beta (T-t))\sup\limits_{s\in[t,T]}\mathbf{E}[|\delta_{\theta}Y_{s}|] \bigg)\bigg\}	 \bigg],
 \end{split}
	\end{align*}
}
which indicates that
{ \begin{align}	\label{myq2131}
 \begin{split}
 &\exp\left\{{p\gamma}\left(\delta_{\theta}Y_t\right)^+\right\}\leq \esssup_{\tau\in\mathcal{T}_{t}}\exp\left\{{p\gamma}\left(\delta_{\theta}y^{\tau}_t\right)^+\right\}\\
 &\leq   \mathbf{E}_t\bigg[\exp\bigg\{p\gamma \bigg(|\xi|+\chi+(\gamma_1+\beta (T-t))\sup\limits_{s\in[t,T]}|\delta_{\theta}Y_{s}|+(\gamma_2+\beta (T-t))\sup\limits_{s\in[t,T]}\mathbf{E}[|\delta_{\theta}Y_{s}|] \bigg)\bigg\}	 \bigg].
 \end{split}
	\end{align}
}

\noindent Using a similar method, we derive that
{ \begin{align}	\label{myq2031}
 \begin{split}
 &\exp\left\{{p\gamma}\left(\delta_{\theta}\widetilde{Y}_t\right)^+\right\}\\
 &\leq   \mathbf{E}_t\bigg[\exp\bigg\{p\gamma \bigg(|\xi|+\chi+(\gamma_1+\beta (T-t))\sup\limits_{s\in[t,T]}|\delta_{\theta}\widetilde{Y}_{s}|+(\gamma_2+\beta (T-t))\sup\limits_{s\in[t,T]}\mathbf{E}[|\delta_{\theta}\widetilde{Y}_{s}|] \bigg)\bigg\}	\bigg].
 \end{split}
	\end{align}
}

\noindent In view of the fact that
\begin{align*}
\left(\delta_{\theta}{Y}\right)^-
\leq \left(\delta_{\theta}\widetilde{Y}\right)^++2|Y^{2}| \ \text{and}\ \left(\delta_{\theta}\widetilde{Y}\right)^-
\leq \left(\delta_{\theta}{Y}\right)^++2|Y^{1}|,
\end{align*}
and recalling \eqref{myq2131} and \eqref{myq2031}, we have
{  \begin{align*}
	\begin{split}
	&\exp\left\{p\gamma \left|\delta_{\theta}{Y}_t\right|\right\}\vee \exp\left\{p\gamma \left|\delta_{\theta}\widetilde{Y}_t\right|\right\} \leq
	\exp\left\{{p\gamma}\left(\left( \delta_{\theta}{Y}_t\right)^++\left(\delta_{\theta}\widetilde{Y}_t\right)^++2|Y^{1}_t|+2|Y^{2}_t|\right)\right\}\\
	&\leq      \mathbf{E}_t\bigg[\exp\bigg\{p\gamma \bigg(|\xi|+\widetilde{\chi}+(\gamma_1+\beta (T-t))\sup\limits_{s\in[t,T]}\delta_{\theta}\overline{Y}_{s}+(\gamma_2+\beta (T-t))\sup\limits_{s\in[t,T]}\mathbf{E}[\delta_{\theta}\overline{Y}_{s}] \bigg)\bigg\}	\bigg]^2.
	\end{split}
	\end{align*}
}
Applying {Doob's maximal inequality} and
H\"{o}lder's inequality, we get  that for each  $p\geq 1$ and $t\in[0,T]$
{\small  \begin{align}\label{myq2231}
	\begin{split}
	&\mathbf{{E}}\bigg[\exp\big\{p\gamma \sup\limits_{s\in[t,T]}\delta_{\theta}\overline{Y}_s\big\}\bigg]\leq \mathbf{{E}}\bigg[\exp\big\{p\gamma \sup\limits_{s\in[t,T]}|\delta_{\theta}{Y}_s|\big\}\exp\big\{p\gamma \sup\limits_{s\in[t,T]}|\delta_{\theta}\widetilde{Y}_s|\big\}\bigg]
	\\ &\leq  4  \mathbf{E}\bigg[\exp\bigg\{4p\gamma \bigg(|\xi|+\widetilde{\chi} +(\gamma_1+\beta (T-t))\sup\limits_{s\in[t,T]}\delta_{\theta}\overline{Y}_{s} +(\gamma_2+\beta (T-t))\sup\limits_{s\in[t,T]}\mathbf{E}[\delta_{\theta}\overline{Y}_{s}] \bigg)\bigg\}	\bigg]
	\\ &\leq 4  \mathbf{E}\bigg[\exp\bigg\{4p\gamma \bigg(|\xi|+\widetilde{\chi} +(\gamma_1+\beta (T-t))\sup\limits_{s\in[t,T]}\delta_{\theta}\overline{Y}_{s}\bigg)\bigg\}	\bigg] \mathbf{E}\bigg[\exp\bigg\{4p\gamma (\gamma_2+\beta (T-t))\sup\limits_{s\in[t,T]}\delta_{\theta}\overline{Y}_{s} \bigg\}	\bigg],
	\end{split}
	\end{align}
}where we used  Jensen's inequality  in the last inequality.

\medskip \noindent  Under assumption \eqref{myq950}, there exist two  constants $h\in (0,T]$ and $\nu>1$  depending only on $\beta,\gamma_1$ and $\gamma_2$ such that \begin{align*}
\text{ $ 4(\gamma_1+\gamma_2+2\beta h) < 1 $  and $ 4\nu (\gamma_1+\beta h)<1 $.}
\end{align*}
In the spirit of H\"{o}lder's inequality, we derive that for any  $p\geq 1$
{ \small \begin{align*}	
	\begin{split}
	&\mathbf{{E}}\bigg[\exp\big\{p\gamma \sup\limits_{s\in[T-h,T]}\delta_{\theta}\overline{Y}_s\big\}\bigg]\\&\leq 4\mathbf{{E}}\bigg[\exp\bigg\{\frac{4\nu p\gamma}{\nu-1}(|\xi|+\widetilde{\chi})\bigg\}		 \bigg]^{\frac{\nu-1}{\nu}}  \mathbf{E}\bigg[\exp\bigg\{4\nu p\gamma (\gamma_1+\beta h)\sup\limits_{s\in[t,T]}\delta_{\theta}\overline{Y}_{s}\bigg\}	\bigg]^{\frac{1}{\nu}} \mathbf{E}\bigg[\exp\bigg\{p\gamma \sup\limits_{s\in[t,T]}\delta_{\theta}\overline{Y}_{s} \bigg\}	\bigg]^{4(\gamma_2+\beta h)}\\
	&\leq 4\mathbf{{E}}\bigg[\exp\bigg\{\frac{4\nu p\gamma}{\nu-1}(|\xi|+\widetilde{\chi})\bigg\}		\bigg]^{\frac{\nu-1}{\nu}}\mathbf{{E}}\bigg[\exp\bigg\{p\gamma\sup\limits_{s\in[T-h,T]}\delta_{\theta}\overline{Y}_{s}\bigg\}		 \bigg]^{4(\gamma_1+\gamma_2+2\beta h)},
	\end{split}
	\end{align*}
}which together with the fact that $4(\gamma_1+\gamma_2+2\beta h)<1$ implies that for any $p\geq 1$ and $\theta\in(0,1)$
{ \[
\mathbf{{E}}\bigg[\exp\big\{p\gamma \sup\limits_{s\in[T-h,T]}\delta_{\theta}\overline{Y}_s\big\}\bigg]\leq 4^{\frac{1}{1-4(\gamma_1+\gamma_2+2\beta h)}}\mathbf{{E}}\bigg[\exp\bigg\{\frac{4\nu p\gamma}{\nu-1}(|\xi|+\widetilde{\chi})\bigg\}		\bigg]^{\frac{\nu-1}{\nu(1-4(\gamma_1+\gamma_2+2\beta h))}}<\infty.
\]
}Note that
${Y}^{1}-{Y}^{2}= (1-\theta)(\delta_{\theta}{Y}+Y^{1}). $ It follows that
\begin{align*}
\mathbf{E}\bigg[\sup\limits_{t\in[T-h,T]}
\big|{Y}^{1}_t-{Y}^{2}_t\big|\bigg]\leq (1-\theta)\bigg(\frac{1}{\gamma}\mathbf{{E}}\bigg[4^{\frac{\nu}{\nu-1}}\exp\bigg\{\frac{4\nu \gamma}{\nu-1}(|\xi|+\widetilde{\chi})\bigg\}		 \bigg]^{\frac{\nu-1}{\nu(1-4(\gamma_1+\gamma_2+2\beta h))}}+\mathbf{{E}}\bigg[\sup\limits_{t\in[0,T]}\big|{Y}^{1}_t\big|\bigg]\bigg).
\end{align*}
Letting $\theta\rightarrow 1$ yields that $Y^1=Y^2$ and then $(Z^1,K^1)=(Z^2,K^2)$ on $[T-h,T]$. Repeating iteratively this procedure a finite number of times, we get the uniqueness on the given interval $[0,T]$. The proof is complete.
\end{proof}

\begin{remark}\label{myrk11}
    \noindent  {\upshape In the convex case, one should use $\ell^1-\theta \ell^{2}$ and   $\ell^{2}-\theta \ell^{1}$ instead of
 $\theta \ell^{1}- \ell^{2}$ and  $\theta \ell^{2}- \ell^{1}$  in the definition of $\delta_{\theta}\ell$ and $\delta_{\theta}\widetilde{\ell}$, respectively. Then the terminal condition and generator of  BSDE \eqref{myq123} satisfies
 \begin{align*}
&\delta_{\theta}\eta\leq |\xi|+|h(\tau,0,\delta_0)|+\gamma_1( 2|Y^{2}_{\tau}|+|\delta_{\theta}Y_\tau|) +\gamma_2(2\mathbf{E}[|Y^{2}_{s}|]_{s=\tau}+\mathbf{E}[|\delta_{\theta}Y_s|]_{s=\tau})),\\
 &\delta_{\theta}f_0(t)\leq \beta (|Y^2_t|+|\delta_{\theta}Y_t|+\mathbf{E}[|Y^2_t|]+\mathbf{E}[|\delta_{\theta}Y_t|]),\\
&\delta_{\theta}f(t,z)
 \leq f(t,Y^2_t,\mathbf{P}_{Y^{2}_t}, z)
 \leq \alpha+\beta (|Y^2_t|+\mathbf{E}[|Y^{2}_{t}|])+\frac{\gamma}{2}|z|^2.
\end{align*}
By a similar analysis, one can check that \eqref{myq2131}, \eqref{myq2031} and \eqref{myq2231} still hold.
}
\end{remark}

\begin{remark}{\upshape
Note that we do not obtain directly a uniform estimate for $\left(\delta_{\theta}Y_t\right)^+$  in \eqref{myq2131}, which involves the term $|\delta_{\theta}Y_t|$. Otherwise, the condition \eqref{myq950} could reduce to the condition \eqref{myq9502}.
}
\end{remark}

\noindent Let us now turn to the proof of existence.
\begin{lemma}\label{myq79}
	Assume that all the conditions of Theorem \ref{my116} hold and $U\in\mathbb{S}$ with $U_T=\xi$. Then, the following quadratic reflected BSDE:
\begin{align*}
\begin{cases}
&Y_t=\xi+\int_t^T f(s,Y_s,\mathbf{P}_{U_s},Z_s)ds-\int_t^T Z_sdB_s+K_T-K_t, \quad 0\le t\le T,\\
& Y_t\geq h(t,U_t, \mathbf{P}_{U_t}),\quad \forall t\in [0,T] \,\,\mbox{ and }\,\, \int_0^T (Y_t-h(t,U_t,\mathbf{P}_{U_t}))dK_t = 0,
\end{cases}
\end{align*}
	admits a unique solution $(Y,Z,K)\in \mathbb{S}\times\mathcal{H}^d\times\mathcal{A}$.
\end{lemma}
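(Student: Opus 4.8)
The plan is to observe that freezing the input process $U$ turns this mean-field reflected BSDE into a \emph{standard} (law-independent) quadratic reflected BSDE, so that existence and uniqueness follow directly from the quoted quadratic reflected BSDE theory once the hypotheses are checked. Concretely, I set $g(s,y,z):=f(s,y,\mathbf{P}_{U_s},z)$ and $L_t:=h(t,U_t,\mathbf{P}_{U_t})$; then the equation in the statement is exactly the reflected BSDE \eqref{my3} with data $(\eta,g,L)=(\xi,g,L)$, in which $g$ is now a genuine driver depending only on $(s,y,z)$ and $L$ is a genuine obstacle process. It therefore suffices to verify that $(\xi,g,L)$ satisfies the assumptions under which Remark \ref{my900} (via \cite[Theorems 3.2 and 4.1]{BY}) guarantees a unique solution $(Y,Z,K)\in\mathbb{S}\times\mathcal{H}^d\times\mathcal{A}$.

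First I would check the driver $g$. Freezing the measure argument in (H5') gives $|g(s,y_1,z)-g(s,y_2,z)|\le\beta|y_1-y_2|$, so $g$ is $\beta$-Lipschitz in $y$ uniformly in $(s,\omega,z)$; concavity (or convexity) of $g(s,y,\cdot)$ is inherited immediately from (H6). For the growth, (H2) yields $|g(s,y,z)|\le\alpha+\beta\,W_1(\mathbf{P}_{U_s},\delta_0)+\beta|y|+\tfrac{\gamma}{2}|z|^2=\alpha_s+\beta|y|+\tfrac{\gamma}{2}|z|^2$ with $\alpha_s:=\alpha+\beta\,\mathbf{E}[|U_s|]$, since $W_1(\mathbf{P}_{U_s},\delta_0)=\mathbf{E}[|U_s|]$. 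This is precisely the form (H7), and because $U\in\mathbb{S}\subset\mathcal{S}^1$ the quantity $\int_0^T\alpha_s\,ds=\alpha T+\beta\int_0^T\mathbf{E}[|U_s|]\,ds$ is a finite deterministic constant, hence trivially in $\mathbb{L}$.

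Next I would control the obstacle. Using (H4) and $W_1(\mathbf{P}_{U_t},\delta_0)=\mathbf{E}[|U_t|]$, I bound $|L_t|\le|h(t,0,\delta_0)|+\gamma_1|U_t|+\gamma_2\,\mathbf{E}[|U_t|]$, so that $\sup_t|L_t|\le\sup_t|h(t,0,\delta_0)|+\gamma_1\sup_t|U_t|+\gamma_2\sup_t\mathbf{E}[|U_t|]$. The first term lies in $\mathbb{S}$ by (H3'), the second by $U\in\mathbb{S}$, and the third is a bounded deterministic term; writing $e^{p\sup_t|L_t|}$ as a product of exponentials and applying H\"older's inequality shows that all exponential moments are finite, i.e.\ $L\in\mathbb{S}$. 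The terminal compatibility needed for the reflection is also satisfied: since $U_T=\xi$, we have $L_T=h(T,\xi,\mathbf{P}_\xi)\le\xi$ by (H1'), while $\xi\in\mathbb{L}$ by (H1'). With $g$, $L$ and $\xi$ verified, Remark \ref{my900} applies and delivers the unique solution $(Y,Z,K)\in\mathbb{S}\times\mathcal{H}^d\times\mathcal{A}$.

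The only genuinely delicate point, and the step I would flag as the main obstacle, is the exponential-integrability bookkeeping that establishes $L\in\mathbb{S}$ (and, to a lesser extent, that $\int_0^T\alpha_s\,ds\in\mathbb{L}$): one must confirm that the class $\mathbb{S}$ is stable under the relevant operations — adding an $\mathbb{S}$-process, scaling by the Lipschitz constants, and adding a bounded deterministic perturbation — which is exactly where H\"older's inequality on exponential moments is invoked. Everything else is a direct translation of the frozen data into the hypotheses of the standard quadratic reflected BSDE theory, and no mean-field structure is needed at this stage since the law argument has been frozen. Note that the structural constants $\gamma_1,\gamma_2$ and the sharp condition $4(\gamma_1+\gamma_2)<1$ play no role here; they will only enter later when one iterates this frozen solution map and passes to the fixed point via the $\theta$-method of Lemma \ref{myq7900}.
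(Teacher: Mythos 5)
Your proposal is correct and follows essentially the same route as the paper: freeze the law $\mathbf{P}_{U_\cdot}$ in the driver and obstacle (keeping the $y$-dependence live), verify the growth, Lipschitz-in-$y$, concavity and $L\in\mathbb{S}$ hypotheses exactly as in the paper's display \eqref{myq82}, and then invoke the Bayraktar--Yao quadratic reflected BSDE theory as packaged in Remark \ref{my900}. The only cosmetic difference is that the paper dispatches uniqueness via the comparison principle (\cite[Proposition 5.1]{BY} or \cite[Theorem 5]{BH2008}) combined with the Snell envelope representation of Lemma \ref{my6}, whereas you cite Remark \ref{my900} (i.e.\ \cite[Theorems 3.2 and 4.1]{BY}) for both existence and uniqueness, which amounts to the same ingredients.
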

\begin{proof}
	From Assumptions (H2), (H4) and (H6), we have
{\small	\begin{align}\label{myq82}
	|f(t,y,P_{U_t},z)|\leq \alpha+\beta(y+\mathbf{E}[|U_{t}|])+\frac{\gamma}{2}|z|^2 ,~~ |h(t,U_{t},\mathbf{P}_{U_t})| \leq |h(t,0,\delta_0)|+\gamma_1|U_{t}|+\gamma_2\mathbf{E}[|U_{t}|].
	\end{align}}
\noindent It follows from \cite[Proposition 5.1]{BY} or \cite[Theorem 5]{BH2008} that the driver $(t,y,z)\mapsto f(t,y,P_{U_t},z)$ satisfies the comparison principle. Then, the above  quadratic reflected BSDE  has at most one $\mathbb{S}\times\mathcal{H}^d\times\mathcal{A}$-solution by Lemma \ref{my6}.	In particular, $(t,y,z)\mapsto f(t,y,P_{U_t},z)$ satisfies \cite[Conditions (H1)]{BY} and $h(\cdot,U_{\cdot},\mathbf{P}_{U_\cdot})\in\mathbb{S}$.
Consequently, applying \cite[Theorem 3.2]{BY}, we get the desired result.
\end{proof}
\begin{remark}{\upshape
For a process $U\in\mathbb{S}$, the driver $(t,z)\mapsto f(t,U_t,P_{U_t},z)$ may not satisfy \cite[Condition (H1))]{BY}. We use the driver $(t,y,z)\mapsto f(t,y,P_{U_t},z)$ instead in Lemma \ref{myq79}.
}
\end{remark}

\begin{lemma}\label{myq520}
Assume that $\xi\in\mathbb{L}$. Then, the process $(\mathbf{E}_t[\xi])_{0\leq t\leq T} \in \mathbb{S}.$
\end{lemma}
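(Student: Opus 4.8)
The goal is to show that for $\xi\in\mathbb{L}$, the martingale $M_t:=\mathbf{E}_t[\xi]$ satisfies $e^M\in\mathcal{S}^p$ for every $p\geq 1$, i.e. $\mathbf{E}[\sup_{t\in[0,T]}e^{p|M_t|}]<\infty$ for all $p\geq 1$. The natural strategy is to dominate $\sup_t e^{p|M_t|}$ by a martingale of the form $e^{pN_t}$ (or a sum of two such) and then apply Doob's maximal inequality in $L^q$ to that martingale.

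The plan is to first pass from $|M_t|$ to $M_t^+$ and $M_t^-$ separately, writing $e^{p|M_t|}\leq e^{pM_t^+}+e^{pM_t^-}$, so it suffices to bound each of the two terms. For the positive part I would use the conditional Jensen inequality: since $x\mapsto e^{px}$ is convex, $e^{pM_t}=e^{p\mathbf{E}_t[\xi]}\leq \mathbf{E}_t[e^{p\xi}]$, so $(e^{pM_t})_{0\le t\le T}$ is dominated by the martingale $\mathbf{E}_t[e^{p\xi}]$. Consequently $e^{pM_t^+}\leq e^{pM_t}+1\leq \mathbf{E}_t[e^{p\xi}]+1$, and the right-hand side is a (nonnegative) martingale plus a constant. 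Symmetrically, applying the same argument to $-\xi$ gives $e^{pM_t^-}\leq \mathbf{E}_t[e^{-p\xi}]+1\le\mathbf{E}_t[e^{p|\xi|}]+1$.

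Next I would apply Doob's $L^q$-maximal inequality. Because $\xi\in\mathbb{L}$ means $\mathbf{E}[e^{r|\xi|}]<\infty$ for every $r\geq 1$, the martingale $\mathbf{E}_t[e^{p\xi}]$ is in $L^q$ for every $q\geq 1$ (taking $r=pq$), so Doob's inequality yields
\[
\mathbf{E}\bigg[\sup_{t\in[0,T]}\big(\mathbf{E}_t[e^{p\xi}]\big)^q\bigg]\leq \Big(\tfrac{q}{q-1}\Big)^q\,\mathbf{E}\big[e^{pq\xi}\big]<\infty,
\]
and the analogous bound holds for $\mathbf{E}_t[e^{p|\xi|}]$. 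Combining with the pointwise domination above and the inequality $\sup_t(a_t+1)\le\sup_t a_t+1$, I obtain $\mathbf{E}[\sup_t e^{q p M_t^+}]<\infty$ and $\mathbf{E}[\sup_t e^{qpM_t^-}]<\infty$; taking $q=1$ already gives $\mathbf{E}[\sup_t e^{p|M_t|}]<\infty$, which is exactly $M\in\mathbb{S}^p$. Since $p\geq 1$ was arbitrary, $M\in\mathbb{S}=\bigcap_{p\ge1}\mathbb{S}^p$.

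There is essentially no serious obstacle here; the only point requiring a little care is the direction of Jensen's inequality (we need the convex function $e^{px}$ so that the inequality $e^{p\mathbf{E}_t[\xi]}\le\mathbf{E}_t[e^{p\xi}]$ points the right way to produce a dominating \emph{super}martingale bound) and the bookkeeping that handles $M_t^-$ via $-\xi$ so that both tails are controlled by $e^{p|\xi|}\in\bigcap_q L^q$. Everything else is a direct invocation of Doob's maximal inequality together with the defining integrability $\xi\in\mathbb{L}$.
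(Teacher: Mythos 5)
Your proof is correct and rests on the same two ingredients as the paper's: conditional Jensen plus Doob's maximal inequality. The paper's version is shorter because it applies Jensen directly to the convex function $x\mapsto e^{|x|}$, giving $\exp\{|\mathbf{E}_t[\xi]|\}\leq \mathbf{E}_t[\exp\{|\xi|\}]$ in one stroke, and then hits the single martingale $\mathbf{E}_t[e^{|\xi|}]$ with Doob's $L^p$ inequality for $p>1$; your split into $M_t^+$ and $M_t^-$ (applying Jensen separately to $\xi$ and $-\xi$) does the same job with somewhat more bookkeeping. One small point to repair: the phrase ``taking $q=1$ already gives'' is not a legitimate application of Doob's inequality, which requires $q>1$ (the $L^1$ maximal inequality is false). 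This costs nothing: since $M_t^{\pm}\geq 0$, the bound $\mathbf{E}\big[\sup_{t}e^{pqM_t^{\pm}}\big]<\infty$ obtained from Doob with any fixed $q>1$ dominates $\mathbf{E}\big[\sup_{t}e^{pM_t^{\pm}}\big]$, so the conclusion for every $p\geq 1$ follows by monotonicity rather than by the $q=1$ case of Doob.
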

\begin{proof}
 Using  Jensen's inequality yields that
\begin{align*}\exp\big\{|\mathbf{E}_t[\xi]|\big\}\leq   \mathbf{E}_t\big[\exp\big\{|\xi|\big\}	 \big],
		\end{align*}
		which together with  {Doob's maximal inequality} indicates that
		\begin{align*}\mathbf{E}\bigg[\exp\big\{p\sup\limits_{t\in[0,T]}|\mathbf{E}_t[\xi]|\big\}\bigg]\leq\bigg(\frac{p}{p-1}\bigg)^p   \mathbf{E}\big[\exp\big\{p|\xi|\big\}	 \big]<\infty, \ \forall p>1.
		\end{align*}
The proof is complete.
\end{proof}
\medskip

\noindent
Then, based on Lemmas \ref{myq79} and \ref{myq520}, we could define recursively a sequence of stochastic processes $(Y^{(m)})_{m=1}^{\infty}$ through  the following quadratic reflected BSDE:
{\small \begin{align}\label{myq78}
\begin{cases}
&Y_t^{(m)}=\xi+\int_t^T f(s,Y^{(m)}_s,\mathbf{P}_{Y^{(m-1)}_s},Z^{(m)}_s)ds-\int_t^T Z^{(m)}_sdB_s+K^{(m)}_T-K^{(m)}_t, \quad 0\le t\le T,\\
& Y^{(m)}_t\geq h(t,Y^{(m-1)}_t, \mathbf{P}_{Y^{(m-1)}_t}),\quad \forall t\in [0,T] \,\,\mbox{ and }\,\, \int_0^T (Y^{(m)}_t-h(t,Y^{(m-1)}_t,\mathbf{P}_{Y^{(m-1)}_t}))dK^{(m)}_t = 0,
\end{cases}
\end{align}}

\noindent where $Y^{(0)}_t=\mathbf{E}_t[\xi]$ for $t\in[0,T]$.
In particular, we have that $(Y^{(m)},Z^{(m)},K^{(m)})\in \mathbb{S}\times\mathcal{H}^d\times\mathcal{A}$. Next, we use  a $\theta$-method to prove that the limit of $Y^{(m)}$ is a desired solution. The following uniform estimates {are} crucial for our main result.

\begin{lemma}\label{myq7901}
	Assume that the conditions of Theorem \ref{my116} are fulfilled. Then, for any $p\geq 1$, we have
	\begin{align*}
	\begin{split}
\sup\limits_{m\geq 0}\mathbf{E}\left[\exp\bigg\{p\gamma\sup\limits_{s\in[0,T]}|Y^{(m)}_s|\bigg\}+\bigg(\int^T_0|Z^{(m)}_t|^2dt\bigg)^p+|K^{(m)}_T|^p \right]<\infty.
	\end{split}
	\end{align*}
\end{lemma}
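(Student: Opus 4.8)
The plan is to derive the uniform bound by applying the exponential estimate of Lemma \ref{my7}(ii) to each member of the approximating sequence \eqref{myq78}, exactly as in the uniqueness proof (Lemma \ref{myq7900}), but now comparing $Y^{(m)}$ against its \emph{predecessor} $Y^{(m-1)}$ rather than against a second solution. First I would fix the Snell envelope representation for $Y^{(m)}$ coming from Lemma \ref{my6} and Remark \ref{my900}, so that $Y^{(m)}_t=\esssup_{\tau\in\mathcal{T}_t}y^{(m),\tau}_t$ where $y^{(m),\tau}$ solves the quadratic BSDE with terminal data $\xi\mathbf{1}_{\{\tau=T\}}+h(\tau,Y^{(m-1)}_\tau,(\mathbf{P}_{Y^{(m-1)}_s})_{s=\tau})\mathbf{1}_{\{\tau<T\}}$ and driver $f(s,y,\mathbf{P}_{Y^{(m-1)}_s},z)$. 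Using assumption (H2) to bound the driver by $\alpha+\beta(|y|+\mathbf{E}[|Y^{(m-1)}_s|])+\tfrac{\gamma}{2}|z|^2$ and (H3'),(H4) to bound the obstacle by $|h(s,0,\delta_0)|+\gamma_1|Y^{(m-1)}_s|+\gamma_2\mathbf{E}[|Y^{(m-1)}_s|]$, I would apply Lemma \ref{my7}(ii) (after checking the integrability hypothesis, which holds since $Y^{(m-1)}\in\mathbb{S}$ by the induction hypothesis) to obtain a pointwise bound on $\exp\{p\gamma(Y^{(m)}_t)^+\}$ in terms of $\mathbf{E}_t[\,\cdot\,]$ of an exponential involving $|\xi|$, the bounded obstacle term, and $\sup_s|Y^{(m-1)}_s|$ and $\mathbf{E}[\sup_s|Y^{(m-1)}_s|]$; a symmetric argument on $-f$ controls $(Y^{(m)})^-$, giving control of $|Y^{(m)}|$.

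Next I would set $a_m:=\mathbf{E}[\exp\{p\gamma\sup_{s\in[T-h,T]}|Y^{(m)}_s|\}]$ and, by Doob's maximal inequality together with Hölder's and Jensen's inequalities (as in the chain leading to \eqref{myq2231}), derive a recursive inequality of the form $a_m\le C\,a_{m-1}^{\,\kappa}$ on a small interval $[T-h,T]$, where the contraction exponent $\kappa$ is driven by $4(\gamma_1+\gamma_2+2\beta h)$ and the prefactor $C$ is a finite constant depending on $\|e^{c|\xi|}\|$ for suitable $c$. Under \eqref{myq950}, choosing $h$ small and an auxiliary $\nu>1$ so that $4(\gamma_1+\gamma_2+2\beta h)<1$ and $4\nu(\gamma_1+\beta h)<1$ makes $\kappa<1$, so the sequence $(a_m)$ stays bounded: if $a_{m-1}\le M$ then $a_m\le CM^{\kappa}$, and since $\kappa<1$ the map $x\mapsto Cx^{\kappa}$ has a finite fixed point $M^*$, whence $\sup_m a_m<\infty$ on $[T-h,T]$. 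Stitching finitely many such intervals gives the exponential bound $\sup_m\mathbf{E}[\exp\{p\gamma\sup_{[0,T]}|Y^{(m)}|\}]<\infty$ for every $p\ge1$.

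Finally, with the $\mathbb{S}$-bound in hand, the bounds on the $\mathcal{H}^d$-norm of $Z^{(m)}$ and the $\mathcal{S}^p$-norm of $K^{(m)}$ follow by the standard a priori estimates for quadratic reflected BSDEs: writing $Y^{(m)}_0=\xi+\int_0^T f\,ds-\int_0^T Z^{(m)}\,dB+K^{(m)}_T$, applying Itô to $e^{\rho Y^{(m)}}$ or $|Y^{(m)}|^2$ with the quadratic growth (H2) absorbed by the now-uniform exponential moments of $\sup|Y^{(m)}|$, one controls $\mathbf{E}[(\int_0^T|Z^{(m)}_t|^2dt)^p]$; then $K^{(m)}_T=Y^{(m)}_0-\xi-\int_0^T f\,ds+\int_0^T Z^{(m)}\,dB$ yields the $L^p$ bound on $K^{(m)}_T$ from those on $\xi$, the driver (via (H2) and the $Z$-estimate), and the stochastic integral.

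I expect the main obstacle to be establishing the self-improving recursion $a_m\le C\,a_{m-1}^{\kappa}$ with a contraction exponent strictly below one while keeping the constant $C$ finite and \emph{independent of} $m$. The delicate point, mirroring \eqref{myq2231}, is that the right-hand side of the Lemma \ref{my7}(ii) estimate couples $\sup_{[t,T]}|Y^{(m)}|$ to both $\sup_{[t,T]}|Y^{(m-1)}|$ and its expectation $\mathbf{E}[\sup|Y^{(m-1)}|]$; splitting these via Hölder (to isolate the $|\xi|$ and obstacle contributions into a fixed finite factor) and via Jensen (to turn the expectation term into a power of $a_{m-1}$) must be done so that the two exponents sum to $\kappa=4(\gamma_1+\gamma_2+2\beta h)<1$. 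Verifying that $h$ and $\nu$ can be chosen uniformly in $m$, and that the fixed-point argument $x\mapsto Cx^{\kappa}$ indeed bounds the whole sequence from an arbitrary finite starting value $a_0=\mathbf{E}[\exp\{p\gamma\sup|\mathbf{E}_\cdot[\xi]|\}]<\infty$ (finite by Lemma \ref{myq520}), is the crux of the argument.
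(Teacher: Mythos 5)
Your proposal is correct and follows essentially the same route as the paper's proof: the Snell envelope representation of $Y^{(m)}$, the exponential estimate of Lemma \ref{my7} applied against the predecessor $Y^{(m-1)}$, then Doob's maximal inequality together with H\"older's and Jensen's inequalities to produce a recursion $a_m\le C\,a_{m-1}^{\kappa}$ with $\kappa<1$ on a small interval $[T-h,T]$, which is iterated in $m$ and then stitched over finitely many intervals. The only cosmetic differences are that the paper invokes assertion (i) of Lemma \ref{my7} (the two-sided bound, available because the driver $(t,y,z)\mapsto f(t,y,\mathbf{P}_{Y^{(m-1)}_t},z)$ satisfies (H7) with $\alpha_t=\alpha+\beta\mathbf{E}[|Y^{(m-1)}_t|]$) instead of assertion (ii) plus a symmetric argument for the negative part, and it obtains the bounds on $Z^{(m)}$ and $K^{(m)}$ by citing \cite[Theorem 3.2]{BY} rather than rerunning the It\^o/a priori estimate by hand.
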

\begin{proof}
The proof will be given in Appendix.
\end{proof}

\begin{lemma}\label{myq7902}
	Assume that all the conditions of Theorem \ref{my116} hold. Then, for any $p\geq 1$, we have
	\begin{align*}
	\begin{split}
\Pi(p):=\sup\limits_{\theta\in(0,1)}\lim_{m\rightarrow \infty}\sup\limits_{q\geq 1}\mathbf{{E}}\bigg[\exp\left\{p\gamma \sup\limits_{s\in[0,T]}\delta_{\theta}\overline{Y}^{(m,q)}_s\right\}\bigg]<\infty,
	\end{split}
	\end{align*}
	where we use the following notations
	{ \[
\delta_{\theta}Y^{(m,q)}=\frac{\theta Y^{(m+q)}-Y^{m}}{1-\theta}, \ \delta_{\theta}\widetilde{Y}^{(m,q)}=\frac{\theta Y^{(m)}-Y^{(m+q)}}{1-\theta}~ \text{and} ~
\delta_{\theta}\overline{Y}:=|\delta_{\theta} Y^{(m,q)}|+|\delta_{\theta}\widetilde{Y}^{(m,q)}|.
\]
}
\end{lemma}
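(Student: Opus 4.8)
The plan is to establish the uniform estimate in Lemma \ref{myq7902} by mimicking the uniqueness argument of Lemma \ref{myq7900}, but now comparing the two approximating solutions $Y^{(m+q)}$ and $Y^{(m)}$ rather than two genuine solutions. First I would write down, via Lemma \ref{my6} and Remark \ref{my900}, the nonlinear Snell envelope representations
\[
Y^{(m+q)}_t=\esssup_{\tau\in\mathcal{T}_t} y^{(m+q),\tau}_t,\qquad Y^{(m)}_t=\esssup_{\tau\in\mathcal{T}_t} y^{(m),\tau}_t,
\]
where the inner BSDEs carry drivers $f(s,Y^{(m+q)}_s,\mathbf{P}_{Y^{(m+q-1)}_s},\cdot)$ and $f(s,Y^{(m)}_s,\mathbf{P}_{Y^{(m-1)}_s},\cdot)$ and obstacles built from $h(\cdot,Y^{(m+q-1)}_\cdot,\mathbf{P}_{Y^{(m+q-1)}_\cdot})$ and $h(\cdot,Y^{(m-1)}_\cdot,\mathbf{P}_{Y^{(m-1)}_\cdot})$. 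Exactly as in \eqref{myq123}, I would form the $\theta$-difference BSDE for $(\delta_\theta y^{(m,q),\tau},\delta_\theta z^{(m,q),\tau})$, using concavity of $f(t,y,v,\cdot)$ to bound $\delta_\theta f(t,z)$ from above by $\alpha+\beta(|Y^{(m+q)}_t|+\mathbf{E}[|Y^{(m+q)}_t|])+\frac{\gamma}{2}|z|^2$, and to control the terminal and zeroth-order terms by the Lipschitz constants $\gamma_1,\gamma_2$ (from (H4)) and $\beta$ (from (H5')).

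The key structural difference from Lemma \ref{myq7900} is that the mean-field arguments are now \emph{shifted by one index}: the law entering the driver and obstacle at level $m$ is $\mathbf{P}_{Y^{(m-1)}}$, not $\mathbf{P}_{Y^{(m)}}$. Consequently the $W_1$-type terms will produce $\delta_\theta\overline{Y}^{(m-1,q)}$ on the right-hand side rather than $\delta_\theta\overline{Y}^{(m,q)}$. My plan is to apply assertion (ii) of Lemma \ref{my7} to the difference BSDE and obtain, in parallel with \eqref{myq2131}--\eqref{myq2031}, the estimate
\begin{align*}
\exp\!\left\{p\gamma\bigl(\delta_\theta Y^{(m,q)}_t\bigr)^+\right\}
\leq \mathbf{E}_t\!\left[\exp\!\left\{p\gamma\Bigl(|\xi|+\widetilde\chi+(\gamma_1+\beta(T-t))\sup_{s\in[t,T]}|\delta_\theta Y^{(m-1,q)}_s|+(\gamma_2+\beta(T-t))\sup_{s\in[t,T]}\mathbf{E}[|\delta_\theta Y^{(m-1,q)}_s|]\Bigr)\right\}\right],
\end{align*}
and the analogous bound for $\delta_\theta\widetilde Y^{(m,q)}$, where $\widetilde\chi$ is the same kind of integrable majorant as in Lemma \ref{myq7900}, now built from the uniform-in-$m$ bounds furnished by Lemma \ref{myq7901}. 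Combining the two via the elementary inequalities $(\delta_\theta Y)^-\le(\delta_\theta\widetilde Y)^++2|Y^{(m)}|$ and its symmetric companion, then invoking Doob's maximal inequality and Hölder's and Jensen's inequalities exactly as in \eqref{myq2231}, I would arrive at a recursion of the form
\[
\Phi_m(p):=\mathbf{E}\!\left[\exp\!\Bigl\{p\gamma\sup_{s\in[0,T]}\delta_\theta\overline{Y}^{(m,q)}_s\Bigr\}\right]\le C(p)\,\Phi_{m-1}\!\bigl(4\cdot\!\tfrac{(\gamma_2+\beta h)\,p}{?}\bigr)^{\kappa},
\]
with a contraction factor governed by $4(\gamma_1+\gamma_2)<1$ from \eqref{myq950}.

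The main obstacle is turning this index-shifted relation into a genuinely \emph{uniform} bound. Because the right-hand side involves $\delta_\theta\overline{Y}^{(m-1,q)}$ at the enlarged exponent coming from the Hölder split (with conjugate exponents tied to the auxiliary $\nu>1$ chosen so that $4\nu(\gamma_1+\beta h)<1$), the estimate is not an immediate fixed-point inequality in a single quantity but a recursion linking level $m$ to level $m-1$. My plan is to localize to a small interval $[T-h,T]$ on which $4(\gamma_1+\gamma_2+2\beta h)<1$, set up the quantity $A:=\sup_m\sup_q\sup_\theta\Phi_m(p)$ (finite a priori on each finite stage because each $Y^{(m)}\in\mathbb{S}$), and show that the recursion forces $A\le C(p)\,A^{\,4(\gamma_1+\gamma_2+2\beta h)}$, whence $A<\infty$ with a bound depending only on $\beta,\gamma_1,\gamma_2,p$ and the exponential moments of $|\xi|+\widetilde\chi$ controlled through Lemma \ref{myq7901}. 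Taking $\limsup_{m}$ and then $\sup_\theta$ preserves finiteness, and repeating over the finitely many subintervals covering $[0,T]$ yields $\Pi(p)<\infty$. The delicate point to check carefully is that the integrable majorant $\widetilde\chi$, and hence $C(p)$, can be chosen uniformly in $m$ and $q$, which is precisely where Lemma \ref{myq7901} is indispensable.
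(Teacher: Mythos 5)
Your proposal follows the paper's skeleton for most of the way: the Snell representations of $Y^{(m)}$ and $Y^{(m+q)}$, the $\theta$-difference BSDE, assertion (ii) of Lemma \ref{my7}, Doob's and H\"older's inequalities on a small interval where $4(\gamma_1+\gamma_2+2\beta h)<1$, and Lemma \ref{myq7901} to furnish majorants uniform in $m,q$. However, the step you use to close the estimate fails. You set $A:=\sup_m\sup_q\sup_{\theta}\Phi_m(p)$, claim it is finite a priori, and deduce $A<\infty$ from $A\le C(p)A^{\kappa}$ with $\kappa=4(\gamma_1+\gamma_2+2\beta h)<1$. But $A$ is \emph{not} finite a priori: for fixed $m,q$ one has $|\delta_\theta Y^{(m,q)}_s|\ge \theta|Y^{(m+q)}_s-Y^{(m)}_s|/(1-\theta)-|Y^{(m)}_s|$, so as $\theta\to1$ the quantity $\Phi_m(p)$ blows up (by Fatou's lemma) whenever $Y^{(m+q)}\not\equiv Y^{(m)}$; finiteness of $\sup_\theta\Phi_m(p)$ for a single pair $(m,q)$ would already force $Y^{(m+q)}=Y^{(m)}$, which is what the scheme is trying to prove in the limit. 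With $A=+\infty$ the inequality $A\le C A^{\kappa}$ is vacuous. Fixing $\theta$ does give a priori finiteness of $A_\theta:=\sup_m\sup_q\Phi_m(p)$ (by Lemma \ref{myq7901}, with a constant degenerating as $\theta\to1$), but then the recursion still does not close as a fixed-point inequality: the index-shifted bound $\sup_q\Phi_m\le C\,(\sup_q\Phi_{m-1})^{\kappa}$ leaves the base level (level $1$ in the paper, built ultimately on $Y^{(0)}=\mathbf{E}_\cdot[\xi]$) outside the recursion, so one can only conclude $A_\theta\le\max\bigl(C^{1/(1-\kappa)},\,C(\sup_q\Phi_{\mathrm{base}})^{\kappa}\bigr)$, which is $\theta$-dependent — and $\theta$-uniformity is precisely the content of the lemma.

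The paper's resolution is to iterate the recursion rather than solve a fixed-point inequality: $\sup_q\Phi_m\le C^{1+\kappa+\cdots+\kappa^{m-2}}\,(\sup_q\Phi_{\mathrm{base}})^{\epsilon_m}$, where in the paper $\epsilon_m=e^{(m-1)\beta h}(4\gamma_1+4\gamma_2+4\beta h)^{m-1}\to0$. For each fixed $\theta$ the base factor is finite uniformly in $q$ (Lemma \ref{myq7901}), hence raised to a vanishing power it tends to $1$, yielding $\lim_m\sup_q\Phi_m\le C^{1/(1-\kappa)}$ with $C$ independent of $\theta$; only then is $\sup_\theta$ taken. This mechanism — the influence of the $\theta$-dependent base case decaying geometrically in $m$ — is exactly why the lemma is stated with $\lim_{m\to\infty}$ instead of $\sup_m$, and it is the idea missing from your plan. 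A secondary inaccuracy: the inner BSDEs in the representation carry the driver $f(s,y^{(m),\tau}_s,\mathbf{P}_{Y^{(m-1)}_s},z^{(m),\tau}_s)$, the $y$-argument being the inner solution rather than $Y^{(m)}_s$ (this is how \eqref{myq78} is built, since the existence theory of \cite{BY} requires keeping the $y$-dependence); consequently the $\theta$-difference generator carries the extra term $2\beta|y^{(m+q),\tau}_t|$, which the paper controls by an additional H\"older split together with \eqref{myq523}. Your estimate omits this term and instead attaches the $\beta(T-t)$ contribution to level $m-1$, where it does not belong; this part is repairable, but the fixed-point step must be replaced by the iteration argument above.
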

\begin{proof}
The proof will be given in Appendix.
\end{proof}
\medskip

 \noindent We are now in a position to complete the proof of the main result.\medskip

\begin{proof}[Proof of Theorem \ref{my116}] It is enough to prove the existence, the uniqueness was dealt with in Lemma \ref{myq7900}. Note that for any integer $p\geq 1$ and $\theta\in (0,1)$,
\begin{align*}
\limsup_{m\rightarrow \infty}\sup\limits_{q\geq 1}\mathbf{E}\big[\sup\limits_{t\in[0,T]}
\big|{Y}^{(m+q)}_t-{Y}^{(m)}_t\big|^p\big]\leq 2^{p-1}(1-\theta)^p\bigg(\frac{\Pi(1)p!}{\gamma^{p}}+\sup\limits_{m\geq 1}\mathbf{{E}}\big[\sup\limits_{t\in[0,T]}\big|{Y}^{(m)}_t\big|^p\big]\bigg).
\end{align*}
Sending $\theta\rightarrow 1$ and recalling Lemmas \ref{myq7901} and \ref{myq7902}, we could find a continuous process $Y\in\mathbb{S}$ such that
\begin{align}\label{myq37}
\lim_{m\rightarrow \infty} \mathbf{E}\bigg[\sup\limits_{t\in[0,T]}
\big|{Y}^{(m)}_t-{Y}_t\big|^p\bigg]=0,\ \forall p\geq 1.
\end{align}
Applying It\^o's formula to $\big|{Y}^{(m+q)}_t-{Y}^{(m)}_t\big|^2$  and by a standard calculus, we have
\begin{align*}
\begin{split}
&\mathbf{E}\bigg[\int^T_0\big|Z^{(m+q)}_t-Z^{(m)}_t\big|^2dt \bigg]\leq \mathbf{E}\bigg[\sup\limits_{t\in[0,T]}
\big|{Y}^{(m+q)}_t-{Y}^{(m)}_t\big|^2+
\sup\limits_{t\in[0,T]}
\big|{Y}^{(m+q)}_t-{Y}^{(m)}_t\big|\Delta^{(m,q)}\bigg]\\
&\leq \mathbf{E}\bigg[\sup\limits_{t\in[0,T]}
\big|{Y}^{(m+q)}_t-{Y}^{(m)}_t\big|^2\bigg]+
\mathbf{E}\left[|\Delta^{(m,q)}|^2\right]^{\frac{1}{2}}\mathbf{E}\bigg[\sup\limits_{t\in[0,T]}
\big|{Y}^{(m+q)}_t-{Y}^{(m)}_t\big|^2\bigg]^{\frac{1}{2}}
\end{split}
\end{align*}
with
\[
\Delta^{(m,q)}:=\int^T_0\big|f(t,Y^{(m+q)}_t,\mathbf{P}_{Y^{(m+q-1)}_t},Z^{(m+q)}_t)- f(t,Y^{(m)}_t,\mathbf{P}_{Y^{(m-1)}_t},Z^{(m)}_t)\big|dt+|K_T^{(m+q)}|+|K_T^{(m)}|.
\]
which together with   Lemma \ref{myq7901}, \eqref{myq37} and dominated convergence theorem indicates that  there exists a process $Z\in \mathcal{H}^d$ so that
\begin{align}\label{myq51}
\lim_{m\rightarrow \infty} \mathbf{E}\bigg[\bigg(\int^T_0\big|Z^{(m)}_t-Z_t\big|^2dt\bigg)^p\bigg]=0, \ \forall p\geq 1.
\end{align}
Set
\[
K_t=Y_t-Y_0+\int_0^tf(s,Y_s,\mathbf{P}_{Y_s},Z_s)\, ds-\int_0^tZ_s\, dB_s.
\]
Applying dominated convergence theorem again yields that for each $p\geq 1$,
\begin{align*}
\lim_{m\rightarrow \infty}\mathbf{E}\left[\bigg(\int^T_0|f(t, Y_t^{(m)},\mathbf{P}_{Y_t^{(m-1)}}, Z_t^{(m)})-f(t, Y_t, \mathbf{P}_{Y_t}, Z_t)|dt\bigg)^p\right]=0,
\end{align*}
which implies that  $\mathbf{E}\big[\sup\limits_{t\in[0,T]}\big|K_t-K_t^{(m)}\big|^p\big]\rightarrow 0$ as $m\rightarrow \infty$ for each $p\geq 1$ and that $K$ is a non-decreasing process. Note that
\[ \lim\limits_{m\rightarrow\infty}\mathbf{E}\bigg[\sup\limits_{t\in[0,T]}\big|h(t,Y^{(m-1)}_t, \mathbf{P}_{Y^{(m-1)}_t})-h(t,Y_t, \mathbf{P}_{Y_t})\big|\bigg]\leq (\gamma_1+\gamma_2) \lim_{m\rightarrow \infty}\mathbf{{E}}\big[\sup\limits_{t\in[0,T]} \big|{Y}^{(m-1)}_t-{Y}_t\big|\big]=0.\]
Then it is obvious that $Y_t\geq h(t,Y_t, \mathbf{P}_{Y_t}).$
Moreover, recalling \cite[Lemma 13]{BH}, we have\[
\int_0^T (Y_t-h(t,Y_t,\mathbf{P}_{Y_t}))dK_t=\lim\limits_{m\rightarrow\infty} \int_0^T (Y^{(m)}_t-h(t,Y^{(m-1)}_t, \mathbf{P}_{Y^{(m-1)}_t}))dK^{(m)}_t=0,
\]
 which implies that $(Y,Z,K)\in\mathbb{S}\times\mathcal{H}^d\times\mathcal{A}$ is a solution to quadratic mean-field reflected \eqref{my1}. The proof is complete.
\end{proof}

\appendix
\renewcommand\thesection{\normalsize Appendix}
\section{ }

\renewcommand\thesection{A}
\normalsize

\subsection{}
\begin{lemma}\label{my7963}
 Let $(Y,Z,K)$ be a $\mathcal{S}^{\infty}\times\mathcal{H}^2\times\mathcal{A}^2$-solution to the reflected BSDEs \eqref{my3}. Assume the driver $g$ satisfies Assumption \emph{(H2)}. Then, $(Z,K)\in BMO\times\mathcal{A}.$
\end{lemma}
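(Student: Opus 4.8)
The plan is to establish the BMO bound on $Z$ first, since the bound on $K$ will follow from the BSDE itself once $Z$ is controlled. Because $Y\in\mathcal{S}^\infty$ and the obstacle in \eqref{my3} is $L\in\mathcal{S}^p$ with $Y$ bounded, I first note that the running integral $\int_\tau^T Z_s\,dB_s$ can be isolated by applying It\^o's formula to $\exp(\rho Y)$ (or to $|Y|^2$) on a stochastic interval $[\tau,T]$ for an arbitrary stopping time $\tau\in\mathcal{T}_0$. The quadratic growth of $g$ from Assumption (H2), namely $|g(s,y,z)|\le \alpha+\beta(|y|+W_1(v,\delta_0))+\tfrac{\gamma}{2}|z|^2$, is exactly what makes the exponential change of variable natural: the factor $\tfrac{\gamma}{2}|Z_s|^2$ produced by the quadratic term in the driver can be absorbed into the second-order It\^o term $\tfrac12\rho^2 e^{\rho Y_s}|Z_s|^2$ by choosing $\rho>\gamma$.

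Concretely, I would set $\Phi(y)=e^{\rho y}$ with $\rho$ large and expand $\Phi(Y_t)$ between $\tau$ and $T$. This gives
\begin{align*}
\Phi(Y_\tau)=\Phi(Y_T)+\int_\tau^T\Bigl(\rho\Phi(Y_s)g(s,\cdots)-\tfrac12\rho^2\Phi(Y_s)|Z_s|^2\Bigr)ds+\int_\tau^T \rho\,\Phi(Y_s)\,dK_s-\int_\tau^T \rho\,\Phi(Y_s)Z_s\,dB_s,
\end{align*}
where the $dK$ term has a favorable sign (it is non-decreasing and multiplied by $\rho\Phi>0$, and it enters on the side that only helps the estimate). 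Using $\Phi(Y_s)\ge e^{-\rho\|Y\|_{\mathcal S^\infty}}$ from below, $|g|\le \alpha+\beta(|Y_s|+W_1(\mathbf P_{Y_s},\delta_0))+\tfrac\gamma2|Z_s|^2$, and the boundedness $\|Y\|_{\mathcal S^\infty}<\infty$, the term $(\tfrac12\rho^2-\tfrac\gamma2\rho)e^{\rho Y_s}|Z_s|^2$ is bounded below by a positive multiple of $|Z_s|^2$ once $\rho>\gamma$. Taking conditional expectation $\mathbf E_\tau$ kills the martingale part (the stochastic integral is a true martingale because $Y$ is bounded and $Z\in\mathcal H^2$), and rearranging yields a bound of the form $\mathbf E_\tau\bigl[\int_\tau^T|Z_s|^2ds\bigr]\le C$ with $C$ depending only on $\rho,\gamma,\alpha,\beta,T$ and $\|Y\|_{\mathcal S^\infty}$, uniformly in $\tau$. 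Taking the supremum over $\tau\in\mathcal T_0$ gives $\|Z\|_{BMO}<\infty$.

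For the bound on $K$, I would read it directly off equation \eqref{my3}: for any $\tau$,
\begin{align*}
K_T-K_\tau=Y_\tau-Y_T-\int_\tau^T g(s,Y_s,Z_s)\,ds+\int_\tau^T Z_s\,dB_s,
\end{align*}
so that $\mathbf E_\tau[K_T-K_\tau]$ is controlled by $2\|Y\|_{\mathcal S^\infty}$, by $\mathbf E_\tau[\int_\tau^T(\alpha+\beta(|Y_s|+W_1(\mathbf P_{Y_s},\delta_0)))ds]$, and by $\tfrac\gamma2\mathbf E_\tau[\int_\tau^T|Z_s|^2ds]$; the last of these is finite by the BMO bound just obtained, and the martingale term vanishes under $\mathbf E_\tau$. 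This shows $K\in\mathcal A$ (indeed $K$ has all moments), completing the proof.

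The main obstacle I anticipate is the justification that the stochastic integral $\int_\tau^T \rho\Phi(Y_s)Z_s\,dB_s$ is a genuine martingale with zero conditional expectation, rather than merely a local martingale, so that the conditional expectation argument is legitimate; this is where the a priori membership $Z\in\mathcal H^2$ and $Y\in\mathcal S^\infty$ must be combined carefully, possibly via a localizing sequence of stopping times and a monotone/dominated convergence passage to recover the uniform-in-$\tau$ estimate in the limit. The algebraic sign-chasing (ensuring the $dK$ term and the leftover $|Z|^2$ coefficient have the right signs) is routine once $\rho$ is fixed larger than $\gamma$.
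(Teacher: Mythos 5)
There is a genuine gap: your choice of the increasing exponential $\Phi(y)=e^{\rho y}$ puts the reflection term on the wrong side. From \eqref{my3} we have $dY_t=-g(t,Y_t,Z_t)\,dt+Z_t\,dB_t-dK_t$, so your displayed It\^o identity (which is algebraically correct) gives, after isolating the quadratic-variation term,
\begin{align*}
\tfrac12\rho^2\int_\tau^T\Phi(Y_s)|Z_s|^2\,ds
=\Phi(Y_T)-\Phi(Y_\tau)+\int_\tau^T\rho\,\Phi(Y_s)\,g(s,Y_s,Z_s)\,ds
+\int_\tau^T\rho\,\Phi(Y_s)\,dK_s-\int_\tau^T\rho\,\Phi(Y_s)Z_s\,dB_s .
\end{align*}
The term $\int_\tau^T\rho\,\Phi(Y_s)\,dK_s$ is \emph{non-negative} and sits on the side that must be bounded from above, so contrary to your claim it does not ``help the estimate''---it must itself be controlled, and a priori you only know $K_T\in\mathcal{L}^2$, not that $\mathbf{E}_\tau[K_T-K_\tau]$ is bounded uniformly in $\tau$ (that is essentially part of what is being proved). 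If you try to repair this by substituting $K_T-K_\tau$ from the equation, you get a circular inequality of the form $c_1\,\mathbf{E}_\tau\bigl[\int_\tau^T|Z_s|^2ds\bigr]\le C+c_2\,\mathbf{E}_\tau\bigl[\int_\tau^T|Z_s|^2ds\bigr]$ with $c_1=\tfrac{\rho(\rho-\gamma)}{2}e^{-\rho\|Y\|_{\mathcal{S}^\infty}}$ and $c_2=\tfrac{\rho\gamma}{2}e^{\rho\|Y\|_{\mathcal{S}^\infty}}$, which closes only when $\gamma\|Y\|_{\mathcal{S}^\infty}$ is small enough; it fails in general.

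The fix is the opposite sign in the exponent, which is exactly what the paper does: apply It\^o's formula to $e^{-2\gamma Y_t}$ (any $e^{-\rho y}$ with $\rho>\gamma$ works). Because the reflection pushes $Y$ \emph{upward}, the test function must be convex and \emph{decreasing}: then the reflection contributes $-\int_\tau^T\rho\, e^{-\rho Y_s}\,dK_s\le 0$, which can simply be dropped, while the quadratic part of the driver is still absorbed by the second-order It\^o term since $\rho^2>\gamma\rho$. With that single change, the rest of your argument---the true-martingale property of the stochastic integral (from $Y\in\mathcal{S}^\infty$ and $Z\in\mathcal{H}^2$), taking $\mathbf{E}_\tau$ and the supremum over $\tau\in\mathcal{T}_0$, and then reading the bound on $K$ off the equation using the BMO bound on $Z$---goes through exactly as you wrote it and coincides with the paper's proof.
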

\begin{proof}
Applying  It\^{o}'s formula to $e^{-2\gamma Y_t}$ yields that for any $\tau\in \mathcal{T}_{0}$
	\begin{align*}
	&2\gamma^2\int^T_\tau e^{-2\gamma Y_s}|Z_s|^2ds \leq  e^{-2\gamma \eta} -2\gamma\int^T_\tau e^{-2\gamma Y_s}g(s,Y_s,Z_s)ds+2\gamma\int^T_\tau e^{-2\gamma Y_s}(Z_sdB_s-dK_s)\\
	\ \ \ & \leq  e^{-2\gamma \eta}+2\gamma\int^T_\tau e^{-2\gamma Y_s}\bigg(\alpha+\beta|Y_s|+\frac{\gamma}{2}|Z_s|^2\bigg)ds
	+2\gamma\int^T_\tau e^{-2\gamma Y_s}Z_sdB_s,
	\end{align*}
	where we used the fact that $K$ is a non-decreasing process in the last inequality. Thus, we could derive that
	\begin{align*}
	\gamma^2e^{-2\gamma\|Y\|_{\mathcal{S}^{\infty}}}\int^T_\tau |Z_s|^2ds &\leq \gamma^2\int^T_\tau e^{-2\gamma Y_s}|Z_s|^2ds\\
	&\leq (1+2\gamma T(\alpha+\beta\|Y\|_{\mathcal{S}^{\infty}})) e^{2\gamma\|Y\|_{\mathcal{S}^{\infty}}}+ \gamma\int^T_\tau e^{-2\gamma Y_s}Z_sdB_s,
	\end{align*}
	which implies that $Z\in BMO$. In particular $Z\in\mathcal{H}^{d}$. Then, by a standard calculus, we could get that $K\in\mathcal{A}$, which ends the proof.
\end{proof}
\medskip

{Let us now turn to the proofs of Lemma \ref{myq7901} and Lemma \ref{myq7902}. The main idea is the same as in Lemma \ref{myq7900} (see also that of \cite[Theorem 2.8]{FHT2}). For the reader's convenience, we shall give the sketch of these proofs.}
\medskip

\subsection{Proof of Lemma \ref{myq7901}}

It follows from Lemma \ref{my6} and Remark \ref{my900} that for any $m\geq 1$
\begin{align}\label{my100}
Y^{(m)}_t:=\esssup_{\tau\in\mathcal{T}_{t}} y^{(m),\tau}_t, \ \ \forall t\in[0,T],
\end{align}
in which $y^{(m),\tau}_t$ is the solution of the following quadratic BSDE
{\small \begin{align*}
y^{(m),\tau}_t=\xi\mathbf{1}_{\{\tau=T\}}+h (\tau,Y^{(m-1)}_{\tau},{(\mathbf{P}_{Y^{(m-1)}_s})_{s=\tau}}) \mathbf{1}_{\{\tau<T\}}+\int^{\tau}_t f(s,y^{(m),\tau}_s,\mathbf{P}_{Y^{(m-1)}_s},z^{(m),\tau}_s)ds-\int^{\tau}_t z^{(m),\tau}_s dB_s.
\end{align*}
}

\noindent Thanks to assertion (i) of Lemma \ref{my7} (taking  $\alpha_t=\alpha+\beta\mathbf{E}[|Y^{(m-1)}_t|]$) and in view of \eqref{myq82}, we get for any $t\in[0,T]$,
\begin{align}	\label{myq523}
	\begin{split}
	&\exp\left\{{\gamma}\big|y^{(m),\tau}_t\big|\right\}\\
	&\leq \mathbf{E}_t\bigg[\exp\bigg\{\gamma e^{\beta (T-t)}\big(|\xi|+\eta+\gamma_1\sup\limits_{s\in[t,T]}|Y^{(m-1)}_{s}|+(\gamma_2+\beta(T-t))\sup\limits_{s\in[t,T]}\mathbf{E}[|Y^{(m-1)}_{s}|]\big)\bigg\}	\bigg],
	\end{split}
\end{align}
in which $\eta=\alpha T+\sup\limits_{s\in[0,T]}|h(s,0,\delta_0)|.$
Recalling \eqref{my100} and applying {Doob's maximal inequality} and  Jensen’s inequality, we get  that for each  $m\geq 1, p\geq 2 $ and $t\in[0,T]$
\begin{align*}	
	\begin{split}
	&\mathbf{E}\bigg[\exp\big\{{p\gamma}\sup\limits_{s\in[t,T]}\big|Y^{(m)}_s\big|\big\}\bigg]\leq 4 \mathbf{E}\bigg[\exp\bigg\{p\gamma e^{\beta (T-t)}\big(|\xi|+\eta+\gamma_1\sup\limits_{s\in[t,T]}|Y^{(m-1)}_{s}|\big)\bigg\}	 \bigg]\\
	& \ \ \ \ \ \  \ \ \ \ \ \  \ \ \ \ \ \ \ \ \ \ \ \ \ \ \ \ \ \ \ \ \ \ \  \ \ \ \ \ \  \ \ \ \ \ \  \times\mathbf{E}\bigg[\exp\bigg\{p\gamma e^{\beta (T-t)}(\gamma_2+\beta(T-t))\sup\limits_{s\in[t,T]}|Y^{(m-1)}_{s}|\bigg\}	 \bigg].
	\end{split}
\end{align*}
Under assumption \eqref{myq950}, we can then find three constants $h\in (0,T]$ and $\nu, \widetilde{\nu}>1$  depending only on $\beta,\gamma_1$ and $\gamma_2$ such that
\begin{align}\label{myq7906}
4e^{\beta h}\widetilde{\nu}(\gamma_1+\gamma_2+\beta h) < 1 ~~ \text{and}~~ 4e^{\beta h}\nu\widetilde{\nu} \gamma_1<1.\end{align}
In the spirit of H\"{o}lder's inequality, we derive that for any  $p\geq 2$
 {\small \begin{align*}
	\begin{split}
 &\mathbf{E}\bigg[\exp\big\{{p\gamma}\sup\limits_{s\in[T-h,T]}\big|Y^{(m)}_s\big|\big\}\bigg]
\\
&\leq 4 \mathbf{E}\bigg[\exp\bigg\{\frac{\nu p\gamma}{\nu-1}e^{\beta h}(|\xi|+\eta)\bigg\}\bigg]^{\frac{\nu-1}{\nu}}\mathbf{E}\bigg[\exp\bigg\{p\gamma\sup\limits_{s\in[T-h,T]}|Y^{(m-1)}_{s}|\bigg\}	\bigg]^{e^{\beta h}(\gamma_1+\gamma_2+\beta h)}\\
&\leq 4\mathbf{E}\bigg[\exp\bigg\{\frac{2\nu p\gamma}{\nu-1}e^{\beta h}|\xi|\bigg\}\bigg]^{\frac{\nu-1}{2\nu}}\mathbf{E}\bigg[\exp\bigg\{\frac{2\nu p\gamma}{\nu-1}e^{\beta h}\eta\bigg\}\bigg]^{\frac{\nu-1}{2\nu}}  \mathbf{E}\bigg[\exp\bigg\{p\gamma\sup\limits_{s\in[T-h,T]}|Y^{(m-1)}_{s}|\bigg\}	\bigg]^{e^{\beta h}(\gamma_1+\gamma_2+\beta h)}.
	\end{split}
\end{align*}}

\noindent Define $ \rho = \frac{1}{1-e^{\beta h}(\gamma_1+\gamma_2+\beta h)}$ and \begin{align*}
\mu:=
\begin{cases} \frac{T}{h}, \  &\text{if $\frac{T}{h}$ is an integer};\\
 [\frac{T}{h}]+1, \ &\text{otherwise}.
\end{cases}
\end{align*}
If $\mu=1$, it follows from the previous inequality that for each $p\geq 2$ and $m\geq 1$
\begin{align*}
\begin{split}
&\mathbf{E}\bigg[\exp\bigg\{p\gamma\sup\limits_{s\in[0,T]}\big|Y^{(m)}_s\big|\bigg\}
\bigg] \\
&\leq 4\mathbf{E}\bigg[\exp\bigg\{\frac{2\nu p\gamma}{\nu-1} e^{\beta h}|\xi|\bigg\}\bigg]^{\frac{1}{2}}\mathbf{E}\bigg[\exp\bigg\{\frac{2\nu p\gamma}{\nu-1} e^{\beta h}\eta\bigg\}\bigg]^{\frac{1}{2}} \mathbf{E}\bigg[\exp\bigg\{{p\gamma} \sup\limits_{s\in[0,T]}|Y^{(m-1)}_s|\bigg\}	 \bigg]^{e^{\beta h}(\gamma_1+\gamma_2+\beta h)}.
\end{split}
\end{align*}
Iterating the above procedure $m$ times, we get,
{\small \begin{align}\label{myq698}
\begin{split}
&\mathbf{E}\bigg[\exp\bigg\{{p\gamma}\sup\limits_{s\in[0,T]}\big|Y^{(m)}_s\big|\bigg\}
\bigg]\\
& \leq 4^{\rho}\mathbf{E}\bigg[\exp\bigg\{\frac{2\nu p\gamma}{\nu-1}  e^{\beta h} |\xi|\bigg\}\bigg]^{\frac{\rho}{2}}\mathbf{E}\bigg[\exp\bigg\{\frac{2\nu p\gamma}{\nu-1}  e^{\beta h}\eta\bigg\}\bigg]^{\frac{\rho}{2}} \mathbf{E}\bigg[\exp\bigg\{p\gamma \sup\limits_{s\in[0,T]}|Y^{(0)}_s|\bigg\}	\bigg]^{e^{m\beta h}(\gamma_1+\gamma_2+\beta h)^m},
\end{split}
\end{align}}

\noindent which is uniformly bounded with respect to $m$. If $\mu=2$, proceeding identically as in the above, we have for any $p\geq 2$,
{\small \begin{align}\label{myq699}
 \begin{split}
&\mathbf{E}\bigg[\exp\bigg\{{p\gamma}\sup\limits_{s\in[T-h,T]}\big|Y^{(m)}_s\big|\bigg\}
\bigg]\\
& \leq 4^{\rho}\mathbf{E}\bigg[\exp\bigg\{\frac{2\nu p\gamma}{\nu-1} e^{\beta h}|\xi|\bigg\}\bigg]^{\frac{\rho}{2}}\mathbf{E}\bigg[\exp\bigg\{\frac{2\nu p\gamma}{\nu-1}  e^{\beta h}\eta\bigg\}\bigg]^{\frac{\rho}{2}}  \mathbf{E}\bigg[\exp\bigg\{p\gamma \sup\limits_{s\in[0,T]}|Y^{(0)}_s|\bigg\}	\bigg]^{e^{m\beta h}(\gamma_1+\gamma_2+\beta h)^m}.
\end{split}
\end{align}
}
Then, consider the following quadratic reflected BSDEs  on time interval $[0,T-h]$:
{\small \begin{align*}
\begin{cases}
&Y_t^{(m)}=Y_{T-h}^{(m)}+\int_t^{T-h} f(s,Y^{(m-1)}_s,\mathbf{P}_{Y^{(m-1)}_s},Z^{(m)}_s)ds-\int_t^{T-h} Z^{(m)}_sdB_s+K^{(m)}_T-K^{(m)}_t, \quad 0\le t\le T-h,\\
& Y^{(m)}_t\geq h(t,Y^{(m-1)}_t, \mathbf{P}_{Y^{(m-1)}_t}),\quad \forall t\in [0,T-h] \,\,\mbox{ and }\,\, \int_0^{T-h} (Y^{(m)}_t-h(t,Y^{(m-1)}_t,\mathbf{P}_{Y^{(m-1)}_t}))dK^{(m)}_t = 0.
\end{cases}
\end{align*}}
In view of the derivation of \eqref{myq698}, we deduce that
{\small \begin{align*}
	\begin{split}
	&\mathbf{E}\bigg[\exp\bigg\{{p\gamma}\sup\limits_{s\in[0,T-h]}\big|Y^{(m)}_s\big|\bigg\}
	\bigg]\\ &\leq 4^{\rho}\mathbf{E}\bigg[\exp\bigg\{\frac{2\nu p\gamma}{\nu-1}  e^{\beta h}\big|Y_{T-h}^{(m)}\big|\bigg\}\bigg]^{\frac{\rho}{2}}\mathbf{E}\bigg[\exp\bigg\{\frac{2\nu p\gamma}{\nu-1}  e^{\beta h}\eta\bigg\}\bigg]^{\frac{\rho}{2}} \mathbf{E}\bigg[\exp\bigg\{p\gamma \sup\limits_{s\in[0,T]}|Y^{(0)}_s|\bigg\}	\bigg]^{e^{m\beta h}(\gamma_1+\gamma_2+\beta h)^m}\\
	&\leq 4^{\rho+\frac{\rho^2}{2}}\mathbf{E}\bigg[\exp\bigg\{\bigg(\frac{2\nu e^{\beta h}}{\nu-1}\bigg)^2 p\gamma|\xi|\bigg\}\bigg]^{\frac{\rho^2}{4}}\mathbf{E}\bigg[\exp\bigg\{\bigg(\frac{2\nu e^{\beta h}}{\nu-1}\bigg)^2 p\gamma\eta\bigg\}\bigg]^{\frac{\rho^2}{4}}\mathbf{E}\bigg[\exp\bigg\{\frac{2\nu p\gamma}{\nu-1} e^{\beta h}\eta\bigg\}\bigg]^{\frac{\rho}{2}} \\
	&\ \ \ \ \ \ \times\mathbf{E}\bigg[\exp\bigg\{\frac{2\nu p\gamma}{\nu-1} e^{\beta h}\sup\limits_{s\in[0,T]}|Y^{(0)}_s|\bigg\}	\bigg]^{\frac{\rho}{2}e^{m\beta h}(\gamma_1+\gamma_2+\beta h)^m} \mathbf{E}\bigg[\exp\bigg\{p\gamma \sup\limits_{s\in[0,T]}|Y^{(0)}_s|\bigg\}	\bigg]^{e^{m\beta h}(\gamma_1+\gamma_2+\beta h)^m},
	\end{split}
	\end{align*}
}where we used \eqref{myq699} in the last inequality.
Putting the above inequalities together and applying H\"{o}lder's inequality again yields that for any $p\geq 2$
{\small \begin{align}\label{myq516}
	\begin{split}
	&\mathbf{E}\bigg[\exp\bigg\{{p\gamma}\sup\limits_{s\in[0,T]}\big|Y^{(m)}_s\big|\bigg\}
	\bigg]
	\leq 	\mathbf{E}\bigg[\exp\bigg\{{2p\gamma}\sup\limits_{s\in[0,T-h]}|Y^{(m)}_s|\bigg\}
	\bigg]^{\frac{1}{2}}	\mathbf{E}\bigg[\exp\bigg\{{2p\gamma}\sup\limits_{s\in[T-h,T]}|Y^{(m)}_s|\bigg\}
	\bigg]^{\frac{1}{2}}\\
	&\leq 4^{\rho+\frac{\rho^2}{4}}\mathbf{E}\bigg[\exp\bigg\{\bigg(\frac{2\nu e^{\beta h}}{\nu-1}\bigg)^2 2p\gamma|\xi|\bigg\}\bigg]^{\frac{\rho}{4}+\frac{\rho^2}{8}}\mathbf{E}\bigg[\exp\bigg\{\bigg(\frac{2\nu e^{\beta h}}{\nu-1}\bigg)^2 2p\gamma\eta\bigg\}\bigg]^{\frac{\rho}{2}+\frac{\rho^2}{8}}\\
	&\hspace*{4cm} \times\mathbf{E}\bigg[\exp\bigg\{\frac{4\nu}{\nu-1}p\gamma e^{\beta h} \sup\limits_{s\in[0,T]}|Y^{(0)}_s|\bigg\}	\bigg]^{(1+\frac{\rho}{4})e^{m\beta h}(\gamma_1+\gamma_2+\beta h)^m},
	\end{split}
	\end{align}
}

\noindent which is uniformly bounded with respect to $m$.
Iterating the above procedure $\mu$ times in the general case and recalling \cite[Theorem 3.2]{BY}, we eventually get
\begin{align*}
	\begin{split}
\sup\limits_{m\geq 0}\mathbf{E}\bigg[\exp\bigg\{p\gamma\sup\limits_{s\in[0,T]}|Y^{(m)}_s|\bigg\}+\bigg(\int^T_0|Z^{(m)}_t|^2dt\bigg)^p+|K^{(m)}_T|^p \bigg]<\infty, \ \forall p\geq 1,
	\end{split}
\end{align*}
which concludes the proof.

\medskip

\subsection{Proof of Lemma \ref{myq7902}}

Without loss of generality, assume $f(t,y,v,\cdot)$ is concave, since the other case can be proved in a similar way, see Remark \ref{myrk11}.
For each fixed $m,q\geq 1$ and $\theta\in(0,1)$, we can define similarly $\delta_{\theta}\ell^{(m,q)}$ and  $\delta_{\theta}\widetilde{\ell}^{(m,q)}$ for $y^{\tau},z^{\tau}$.
Then, the pair of processes $(\delta_{\theta}y^{(m,q),\tau},\delta_{\theta}z^{(m,q),\tau})$
satisfies the following BSDE:
{\small \begin{align}\label{myq12}
\begin{split}
\delta_{\theta}y^{(m,q),\tau}_t=&\delta_{\theta}\eta^{(m,q),\tau}+\int^\tau_t\left(\delta_{\theta}f^{(m,q)}(s,\delta_{\theta}y^{(m,q),\tau}_s,\delta_{\theta}z^{(m,q),\tau}_s)+\delta_{\theta}f_0^{(m,q)}(s)\right)ds-\int^\tau_t\delta_{\theta}z^{(m,q),\tau}_sdB_s,
\end{split}
\end{align}}
where the terminal condition and generator are given by
\begin{align*}
&\delta_{\theta}\eta^{(m,q)}=-\xi\mathbf{1}_{\{\tau=T\}}+\frac{\theta h (\tau,Y^{(m+q-1)}_{\tau},{(\mathbf{P}_{Y^{(m+q-1)}_s})_{s=\tau}})- h (\tau,Y^{(m-1)}_{\tau},{(\mathbf{P}_{Y^{(m-1)}_s})_{s=\tau}})}{1-\theta} \mathbf{1}_{\{\tau<T\}},\\
& \delta_{\theta}f_0^{(m,q)}(t)=
\frac{1}{1-\theta}\left(f(t,y^{(m),\tau}_{t},\mathbf{P}_{Y^{(m+q-1)}_t}, z^{(m),\tau}_t)-f(t,y^{(m),\tau}_{t},\mathbf{P}_{Y^{(m-1)}_t}, z^{(m),\tau}_t)\right),\\
&
\delta_{\theta}f^{(m,q)}(t,y,z)=\frac{1}{1-\theta}\bigg(
\theta f(t,y^{(m+q),\tau}_{t},\mathbf{P}_{Y^{(m+q-1)}_t}, z^{(m+q),\tau}_t)\\
&\ \ \ \ \ \ \ \ \ \ \ \ \ \ \ \ \ \ \ \  \ \  \ \ \ \ \ \ \ \ \ \ \ \ \ \ \ \ - f(t,-(1-\theta)y+\theta y^{(m+q),\tau}_t,\mathbf{P}_{Y^{(m+q-1)}_t}, -(1-\theta)z+\theta z^{(m+q),\tau}_t)\bigg).
\end{align*}

\noindent Recalling Assumptions (H2), (H4), (H5') and (H6), we have
\begin{align*}
&\delta_{\theta}\eta^{(m,q)}\leq |\xi|+|h(\tau,0,\delta_0)|+\gamma_1 \left( 2|Y^{(m+q-1)}_{\tau}|+|\delta_{\theta}Y^{(m-1,q)}_\tau|\right)\\
&\hspace*{5cm} +\gamma_2\left(2\mathbf{E}[|Y^{(m+q-1)}_{s}|]_{s=\tau}+\mathbf{E}[|\delta_{\theta}Y^{(m-1,q)}_s|]_{s=\tau})\right),\\
&\delta_{\theta}f_0^{(m,q)}(t)\leq \beta \left(\mathbf{E}[|Y^{(m+q-1)}_{t}|]+\mathbf{E}[|\delta_{\theta}Y^{(m-1,q)}_t|]\right),\\
&\delta_{\theta}f^{(m,q)}(t,y,z)
 \leq \beta|y|+\beta |y^{(m+q),\tau}_t| -f(t,y^{(m+q),\tau}_t,\mathbf{P}_{Y^{(m+q-1)}_t}, -z)
\\
&\hspace*{5cm} \leq \alpha+2\beta |y^{(m+q),\tau}_t|+\beta\mathbf{E}[|Y^{(m+q-1)}_{t}|]+\beta|y|+\frac{\gamma}{2}|z|^2.
\end{align*}
For any $m,q\geq 1$, set $ C_2:=\sup\limits_{m}\mathbf{E}\big[\sup\limits_{s\in[0,T]}|Y^{(m)}_{s}|\big]<\infty$ and
\begin{align*}
&\zeta^{(m,q)}= e^{\beta T}\bigg(|\xi|+\eta+\gamma_1\big(\sup\limits_{s\in[0,T]}|Y^{(m-1)}_{s}|+\sup\limits_{s\in[0,T]}|Y^{(m+q-1)}_{s}|\big)+(\gamma_2+\beta T)C_2\bigg),\\
& \chi^{(m,q)}:=\eta+2 \gamma_1 \bigg(\sup\limits_{s\in[0,T]}|Y^{(m+q-1)}_{s}|+\sup\limits_{s\in[0,T]}|Y^{(m-1)}_{s}|\bigg)+2(\gamma_2+\beta T)C_2,\\
&\widetilde{\chi}^{(m,q)}:=\chi^{(m,q)}+\sup\limits_{s\in[0,T]}|Y^{(m+q)}_{s}|+\sup\limits_{s\in[0,T]}|Y^{(m)}_{s}|.\end{align*}

\noindent Using assertion (ii) of Lemma \ref{my7} to  \eqref{myq12} and
H\"{o}lder's inequality, we derive that  for any $p\geq 1$
{\small  \begin{align*}	
 \begin{split}
\exp\left\{{p\gamma}\big(\delta_{\theta}y^{(m,q),\tau}_t\big)^+\right\} &\leq   \mathbf{E}_t\bigg[\exp\bigg\{p\gamma e^{\beta(T-t)}\bigg(|\xi|+\chi^{(m,q)}+2\beta T\sup\limits_{s\in[t,T]}|y^{(m+q),\tau}_{s}| +\gamma_1\sup\limits_{s\in[t,T]}|\delta_{\theta}Y^{(m-1,q)}_{s}|
\\&\hspace*{5cm} +(\gamma_2+\beta (T-t))\sup\limits_{s\in[t,T]}\mathbf{E}[|\delta_{\theta}Y^{(m-1,q)}_{s}|] \bigg)\bigg\}	\bigg]\\
 &\leq   \mathbf{E}_t\bigg[\exp\bigg\{2p\gamma e^{\beta(T-t)}\bigg(|\xi|+\chi^{(m,q)} +\gamma_1\sup\limits_{s\in[t,T]}|\delta_{\theta}Y^{(m-1,q)}_{s}|\\&\hspace*{0.5cm} +(\gamma_2+\beta (T-t))\sup\limits_{s\in[t,T]}\mathbf{E}[|\delta_{\theta}Y^{(m-1,q)}_{s}|] \bigg)\bigg\}	\bigg]^{\frac{1}{2}} \mathbf{E}_t\bigg[\exp\bigg\{4 p\gamma e^{2\beta T}\sup\limits_{s\in[t,T]}|y^{(m+q),\tau}_{s}| \bigg\}	 \bigg]^{\frac{1}{2}}.
 \end{split}
	\end{align*}
}

\noindent Recalling \eqref{myq523} and using {Doob's maximal inequality}, we conclude that for each $p\geq 2$ and $t\in[0,T]$
\begin{align*}
\mathbf{E}_t\left[\exp\big\{{p\gamma}\sup\limits_{s\in[t,T]}\big|y^{(m),\tau}_s\big|\big\}\right]\vee \mathbf{E}_t\left[\exp\big\{{p\gamma}\sup\limits_{s\in[t,T]}\big|y^{(m+q),\tau}_s\big|\big\}\right]\leq 4 \mathbf{E}_t\left[\exp\{{p\gamma} \zeta^{(m,q)}\} \right], \ \forall m,q\geq 1.
\end{align*}
It follows from \eqref{my100}  that
{\small  \begin{align*}
 \begin{split}
 &\exp\left\{{p\gamma}\big(\delta_{\theta}Y^{(m,q)}_t\big)^+\right\}\leq \esssup_{\tau\in\mathcal{T}_{t}}\exp\left\{{p\gamma}\big(\delta_{\theta}y^{(m,q),\tau}_t\big)^+\right\}\\
 &\leq 4 \mathbf{E}_t\bigg[\exp\bigg\{2p\gamma e^{\beta(T-t)}\bigg(|\xi|+\chi^{(m,q)} +\gamma_1\sup\limits_{s\in[t,T]}|\delta_{\theta}Y^{(m-1,q)}_{s}| +(\gamma_2+\beta (T-t))\sup\limits_{s\in[t,T]}\mathbf{E}[|\delta_{\theta}Y^{(m-1,q)}_{s}|] \bigg)\bigg\}	\bigg]^{\frac{1}{2}}\\
 &\ \ \ \ \ \ \ \ \  \times  \mathbf{E}_t\bigg[\exp\bigg\{4 p\gamma e^{2\beta T}\zeta^{(m,q)} \bigg\}	\bigg]^{\frac{1}{2}}.
 \end{split}
	\end{align*}
}

\noindent Using a similar method, we derive that
{\small  \begin{align*}	
 \begin{split}
 &\exp\left\{{p\gamma}\big(\delta_{\theta}\widetilde{Y}^{(m,q)}_t\big)^+\right\}\\
 &\leq  4  \mathbf{E}_t\bigg[\exp\bigg\{2p\gamma e^{\beta(T-t)}\bigg(|\xi|+\chi^{(m,q)} +\gamma_1\sup\limits_{s\in[t,T]}|\delta_{\theta}\widetilde{Y}^{(m-1,q)}_{s}| +(\gamma_2+\beta (T-t))\sup\limits_{s\in[t,T]}\mathbf{E}[|\delta_{\theta}\widetilde{Y}^{(m-1,q)}_{s}|] \bigg)\bigg\}	\bigg]^{\frac{1}{2}}\\
 &\ \ \ \ \ \ \ \ \  \times  \mathbf{E}_t\bigg[\exp\bigg\{4 p\gamma e^{2\beta T}\zeta^{(m,q)} \bigg\}	\bigg]^{\frac{1}{2}}.
 \end{split}
	\end{align*}
}

\noindent According to  the fact that
\begin{align*}
\big(\delta_{\theta}{Y}^{(m,q)}\big)^-
\leq \big(\delta_{\theta}\widetilde{Y}^{(m,q)}\big)^++2|Y^{(m)}| \ \text{and}\ \big(\delta_{\theta}\widetilde{Y}^{(m,q)}\big)^-
\leq \big(\delta_{\theta}{Y}^{(m,q)}\big)^++2|Y^{(m+q)}|,
\end{align*}
we deduce that
{\small  \begin{align*}
	\begin{split}
	&\exp\left\{p\gamma \big|\delta_{\theta}{Y}^{(m,q)}_t\big|\right\}\vee \exp\left\{p\gamma \big|\delta_{\theta}\widetilde{Y}^{(m,q)}_t\big|\right\}\leq
	\exp\left\{{p\gamma}\big(\big( \delta_{\theta}{Y}^{(m,q)}_t\big)^++\big(\delta_{\theta}\widetilde{Y}^{(m,q)}_t\big)^++2|Y^{(m)}_t|+2|Y^{(m+q)}_t|\big)\right \}\\
	&\leq   4^2  \mathbf{E}_t\bigg[\exp\bigg\{2p\gamma e^{\beta(T-t)}\bigg(|\xi|+\widetilde{\chi}^{(m,q)} +\gamma_1\sup\limits_{s\in[t,T]}\delta_{\theta}\overline{Y}^{(m-1,q)}_{s} +(\gamma_2+\beta (T-t))\sup\limits_{s\in[t,T]}\mathbf{E}[\delta_{\theta}\overline{Y}^{(m-1,q)}_{s}] \bigg)\bigg\}	\bigg] \\
 &\ \ \ \ \ \ \ \ \  \ \ \times \mathbf{E}_t\big[\exp\big\{4 p\gamma e^{2\beta T}\zeta^{(m,q)} \big\}	\big].
	\end{split}
	\end{align*}
}

\noindent Applying {Doob's maximal inequality} and
H\"{o}lder's inequality, we get  that for each  $p> 1$ and $t\in[0,T]$
{\small  \begin{align*}
	\begin{split}
	&\mathbf{{E}}\bigg[\exp\big\{p\gamma \sup\limits_{s\in[t,T]}\delta_{\theta}\overline{Y}^{(m,q)}_s\big\}\bigg]
	\\ &\leq  4^5  \mathbf{E}\bigg[\exp\bigg\{4p\gamma e^{\beta(T-t)}\widetilde{\nu}\bigg(|\xi|+\widetilde{\chi}^{(m,q)} +\gamma_1\sup\limits_{s\in[t,T]}\delta_{\theta}\overline{Y}^{(m-1,q)}_{s} +(\gamma_2+\beta (T-t))\sup\limits_{s\in[t,T]}\mathbf{E}[\delta_{\theta}\overline{Y}^{(m-1,q)}_{s}] \bigg)\bigg\}	\bigg]^{\frac{1}{\widetilde{\nu}}} \\
 &\ \ \ \ \ \ \ \ \  \ \ \times \mathbf{E}\bigg[\exp\bigg\{\frac{4\widetilde{\nu}}{\widetilde{\nu}-1} p\gamma e^{2\beta T}\zeta^{(m,q)} \bigg\}	\bigg]^{\frac{\widetilde{\nu}-1}{\widetilde{\nu}}}.
	\end{split}
	\end{align*}
}

\noindent Recalling the definitions of $h$, $\nu$ and $\widetilde{\nu}$ in \eqref{myq7906}, we have
{ \small \begin{align}	\label{myq512}
	\begin{split}
	&\mathbf{{E}}\bigg[\exp\big\{p\gamma \sup\limits_{s\in[T-h,T]}\delta_{\theta}\overline{Y}^{(m,q)}_s\big\}\bigg]
	\\ &\leq 4^5\mathbf{{E}}\bigg[\exp\bigg\{\frac{8\nu\widetilde{\nu} p\gamma}{\nu-1}e^{\beta h}|\xi|\bigg\}		\bigg]^{\frac{\nu-1}{2\nu \widetilde{\nu}}}\mathbf{{E}}\bigg[\exp\bigg\{\frac{8\nu\widetilde{\nu} p\gamma}{\nu-1}e^{\beta h}\widetilde{\chi}^{(m,q)}\bigg\}		\bigg]^{\frac{\nu-1}{2\nu\widetilde{\nu}}}\mathbf{E}\bigg[\exp\bigg\{\frac{4\widetilde{\nu}}{\widetilde{\nu}-1} p\gamma e^{2\beta T}\zeta^{(m,q)} \bigg\}	 \bigg]^{\frac{\widetilde{\nu}-1}{\widetilde{\nu}}}\\
	&\ \ \ \ \ \ \ \ \ \  \times \mathbf{{E}}\bigg[\exp\bigg\{p\gamma\sup\limits_{s\in[T-h,T]}\delta_{\theta}\overline{Y}^{(m-1,q)}_{s}\bigg\}		\bigg]^{4e^{\beta h}(\gamma_1+\gamma_2+\beta h)}.
	\end{split}
	\end{align}
}

\noindent Set $ \widetilde{\rho} = \frac{1}{1-4e^{\beta h}(\gamma_1+\gamma_2+\beta h)}$. If $\mu=1$, it follows from \eqref{myq512} that for each $p\geq 1$ and $m,q\geq 1$
{\small \begin{align*}
\begin{split}
&\mathbf{E}\bigg[\exp\big\{p\gamma\sup\limits_{s\in[0,T]}\delta_{\theta}\overline{Y}^{(m,q)}_s\big\}
\bigg]
\\ &\leq 4^{5\widetilde{\rho}}\mathbf{{E}}\bigg[\exp\bigg\{\frac{8\nu\widetilde{\nu} p\gamma}{\nu-1}e^{\beta h}|\xi|\bigg\}		\bigg]^{\frac{\widetilde{\rho}}{2}}\sup\limits_{m,q\geq 1}\mathbf{{E}}\bigg[\exp\bigg\{\frac{8\nu\widetilde{\nu} p\gamma}{\nu-1}e^{\beta h}\widetilde{\chi}^{(m,q)}\bigg\}		\bigg]^{\frac{\widetilde{\rho}}{2}}\sup\limits_{m,q\geq 1}\mathbf{E}\bigg[\exp\bigg\{\frac{4\widetilde{\nu}}{\widetilde{\nu}-1} p\gamma e^{2\beta T}\zeta^{(m,q)} \bigg\}	\bigg]^{{\widetilde{\rho}}}\\
	&\ \ \ \ \ \ \ \ \ \  \times \mathbf{{E}}\bigg[\exp\bigg\{p\gamma\sup\limits_{s\in[0,T]}\delta_{\theta}\overline{Y}^{(1,q)}_{s}\bigg\}		\bigg]^{e^{(m-1)\beta h}(4\gamma_1+4\gamma_2+4\beta h)^{m-1}}.
\end{split}
\end{align*}}

\noindent Applying Lemma \ref{myq7901}, we have for any $\theta\in(0,1)$
\[
\lim_{m\rightarrow \infty}\sup\limits_{q\geq 1}\mathbf{{E}}\bigg[\exp\bigg\{p\gamma\sup\limits_{s\in[0,T]}\delta_{\theta}\overline{Y}^{(1,q)}_{s}\bigg\}		\bigg]^{e^{(m-1)\beta h}(4\gamma_1+4\gamma_2+4\beta h)^{m-1}}=1.
\]
It follows that
\begin{align*}
&\sup\limits_{\theta\in(0,1)}\lim_{m\rightarrow \infty}\sup\limits_{q\geq 1}\mathbf{E}\bigg[\exp\big\{p\gamma\sup\limits_{s\in[0,T]}\delta_{\theta}\overline{Y}^{(m,q)}_s\big\}
\bigg] \\
&\hspace*{1cm}\leq 4^{5\widetilde{\rho}}\mathbf{{E}}\bigg[\exp\bigg\{\frac{8\nu\widetilde{\nu} p\gamma}{\nu-1}e^{\beta h}|\xi|\bigg\}		\bigg]^{\frac{\widetilde{\rho}}{2}}\sup\limits_{m,q\geq 1}\mathbf{{E}}\bigg[\exp\bigg\{\frac{8\nu\widetilde{\nu} p\gamma}{\nu-1}e^{\beta h}\widetilde{\chi}^{(m,q)}\bigg\}		\bigg]^{\frac{\widetilde{\rho}}{2}} \\ &\hspace*{6cm} \times \sup\limits_{m,q\geq 1}\mathbf{E}\bigg[\exp\bigg\{\frac{4\widetilde{\nu}}{\widetilde{\nu}-1} p\gamma e^{2\beta T}\zeta^{(m,q)} \bigg\}	\bigg]^{{\widetilde{\rho}}}<\infty.
\end{align*}
If $\mu=2$, proceeding identically as to derive \eqref{myq516}, we have for any $p\geq 1$
 \begin{align*}
	\begin{split}
	&\mathbf{E}\bigg[\exp\big\{{p\gamma}\sup\limits_{s\in[0,T]}\delta_{\theta}\overline{Y}^{(m,q)}_s\big\}
	\bigg]\\
	&\leq 4^{5\widetilde{\rho}+\frac{5\widetilde{\rho}^2}{4}}\mathbf{E}\bigg[\exp\bigg\{\bigg(\frac{8\nu\widetilde{\nu}e^{\beta h}}{\nu-1}\bigg)^2 2p\gamma|\xi|\bigg\}\bigg]^{\frac{\widetilde{\rho}}{4}+\frac{\widetilde{\rho}^2}{8}}\sup\limits_{m,q\geq 1}\mathbf{E}\bigg[\exp\bigg\{\bigg(\frac{8\nu\widetilde{\nu}e^{\beta h}}{\nu-1}\bigg)^2 2p\gamma\widetilde{\chi}^{(m,q)}\bigg\}\bigg]^{\frac{\widetilde{\rho}}{2}+\frac{\widetilde{\rho}^2}{8}}\\
	&\ \ \ \ \ \ \ \ \ \ \ \ \ \times\sup\limits_{m,q\geq 1}\mathbf{E}\bigg[\exp\bigg\{\frac{32\nu\widetilde{\nu}^2 e^{\beta h}}{(\widetilde{\nu}-1)({\nu-1})} p\gamma e^{2\beta T}\zeta^{(m,q)} \bigg\}	 \bigg]^{{\widetilde{\rho}+\frac{\widetilde{\rho}^2}{4}}}\\
	&\ \ \ \ \ \ \ \ \ \ \ \ \ \times\mathbf{E}\bigg[\exp\bigg\{\frac{8\nu\widetilde{\nu}e^{\beta h}}{\nu-1}p\gamma \sup\limits_{s\in[0,T]}|\delta_{\theta}\overline{Y}^{(1,q)}_s|\bigg\}	 \bigg]^{(1+\frac{\widetilde{\rho}}{4})e^{(m-1)\beta h}(4\gamma_1+4\gamma_2+4\beta h)^{m-1}},
	\end{split}
	\end{align*}
which also implies the desired assertion in this case. Iterating the above procedure $\mu$ times in the general case, we get the desired result.

\end{document}